\documentclass[12pt]{article}

\usepackage{amsmath,amsthm,amsfonts,amssymb, color,xcolor,subcaption,graphicx,enumerate} 

\newtheorem{theorem}{Theorem}[section]
\newtheorem{corollary}[theorem]{Corollary}

\newtheorem{proposition}[theorem]{Proposition}
\newtheorem{lemma}[theorem]{Lemma}

\newtheorem{remark}[theorem]{Remark}

\def\cA{\mathcal{A}}
\def\cB{\mathcal{B}}

\def\cD{\mathcal{D}}

\def\cF{\mathcal{F}}
\def\cG{\mathcal{G}}
\def\cH{\mathcal{H}}
\def\cI{\mathcal{I}}
\def\cJ{\mathcal{J}}

\def\cS{\mathcal{S}}

\def\bD{\mathbb{D}}
\def\bE{\mathbb{E}}
\def\bL{\mathbb{L}}

\def\bP{\mathbb{P}}
\def\bR{\mathbb{R}}

\def\s{\sigma}

\newcommand{\les}{\lesssim}

\begin{document}


\title{Gaussian fluctuations for the stochastic wave equation with drift}

\author{Raluca M. Balan\footnote{Corresponding author. University of Ottawa, Department of Mathematics and Statistics, 150 Louis Pasteur Private, Ottawa, Ontario, K1N 6N5, Canada. E-mail address: rbalan@uottawa.ca.} \footnote{Research supported by a grant from the Natural Sciences and Engineering Research Council of Canada.}
\and
William D. Stephenson \footnote{University of Ottawa, Department of Mathematics and Statistics, 150 Louis Pasteur Private, Ottawa, Ontario, K1N 6N5, Canada. E-mail address: wstep051@uottawa.ca.}
}

\date{September 17, 2026}
\maketitle

\begin{abstract}
\noindent In this article, we study Gaussian fluctuations of spatial averages of the solution to the stochastic wave equation with a nonlinear drift and multiplicative Gaussian noise in dimensions one and two. The noise is white in time, and its spatial covariance is either integrable or given by a Riesz kernel; space-time white noise in dimension one is also included. We establish spatial ergodicity and convergence of the rescaled covariance, and prove a quantitative central limit theorem for the centered spatial average over a ball of radius $R$. The bounds in total variation distance are of order $R^{-d/2}$ in the integrable case and $R^{-\beta/2}$ for a Riesz kernel of order $\beta$. We also obtain a functional central limit theorem in the space of continuous functions. The nonlinear drift creates additional difficulties in the estimation of the second Malliavin derivative and in the analysis of spatial averages. Our approach combines a second-order Gaussian Poincar\'e inequality with spatially integrated estimates for the second Malliavin derivative, exploiting the compact support of the wave kernel instead of a pointwise product-kernel bound. Further arguments based on the Clark-Ocone formula are used to control the drift contributions in the covariance and temporal-increment estimates.
\end{abstract}

\noindent {\em MSC 2020:} Primary 60H15; Secondary 60G60, 60G51

\vspace{1mm}

\noindent {\em Keywords:} stochastic wave equation, random fields, Malliavin calculus, Stein method, normal approximations

\pagebreak

\tableofcontents

\section{Introduction}

In this article, we consider the stochastic wave equation:
\begin{align}
\label{SWE}
	\begin{cases}
		\dfrac{\partial^2 u}{\partial t^2} (t,x)
		=  \Delta u (t,x)+\sigma\big(u(t,x)\big) \dot{W}(t,x)+b\big(u(t,x) \big), \ 
		t>0, \ x \in \bR^d \ (d\leq 2), \\
		u(0,x) = 1, \dfrac{\partial u}{\partial t} (0,x)=0, \quad x \in \bR^d,
	\end{cases}
\end{align}
driven by a spatially-homogeneous Gaussian noise which is white in time. More precisely, $W=\{W(\varphi);\varphi \in \cD(\bR_{+} \times \bR^d)\}$ is a zero-mean Gaussian process defined on a complete probability space $(\Omega,\cF,\bP)$, with covariance
\[
\bE[W(\varphi)W(\psi)]=\int_0^{\infty} \int_{\bR^d} \int_{\bR^d}\varphi(t,x)\psi(t,y)f(x-y)dxdydt=:\langle \varphi,\psi \rangle_{\cH},
\] 
where $f:\bR^d \to [0,\infty]$ is a tempered non-negative definite function.  By Bochner-Schwartz theorem, $f$ is the  Fourier transform of a tempered measure $\mu$ on $\bR^d$. Hence, for any $\varphi,\psi \in \cS(\bR^d)$,
\[
\int_{\bR^d} \int_{\bR^d}\varphi(x)\psi(y) f(x-y)dxdy=\int_{\bR^d}\cF \varphi(\xi) \overline{\cF \psi(\xi)}\mu(d\xi),
\]
where $\cS(\bR^d)$ is the space of rapidly decreasing functions on $\bR^d$, $\cF \varphi(\xi)=\int_{\bR^d} e^{-i\xi \cdot x} \varphi(x)dx$ is the Fourier transform of $\varphi$, and $x\cdot y$ denotes the usual inner product in $\bR^d$. Let $\cH$ denote the completion of $\cD(\bR_{+} \times \bR^d)$ with respect to the inner product $\langle \cdot,\cdot \rangle_{\cH}$. 

Throughout, we will assume that the functions $\sigma$ and $b$ are in $C^2(\bR)$ with $b',b'',\sigma',\sigma''$ bounded.

A predictable process $\{u(t,x);t\geq 0,x\in \bR^d\}$ is called a {\em (mild) solution} of \eqref{SWE}
if it satisfies the integral equation:
\begin{align}
\nonumber
u(t,x) & =1+\int_0^t \int_{\bR^d}G_{t-s}(x-y)\sigma\big(u(s,y)\big) W(ds,dy) \\
\label{mild soln}
& \quad +\int_0^t \int_{\bR^d} G_{t-s}(x-y) b\big(u(s,y)\big)dsdy,
\end{align}
where the stochastic integral is interpreted in the It\^o sense, and $G$ is the fundamental solution of the wave equation:
\begin{align*}
G_t(x)&=\frac{1}{2}\mathbf 1_{\{|x|<t\}} \quad \quad \quad \quad \quad \quad \mbox{if} \quad d=1\\
G_t(x)& =\frac{1}{2\pi} \frac{1}{\sqrt{t^2-|x|^2}} \mathbf 1_{\{|x|<t\}} \quad \mbox{if} \quad d=2.
\end{align*}
Here, $|\cdot|$ denotes the Euclidean norm. We will use the fact that for any $t>0$,
\begin{equation}
\label{int-G}
\int_{\bR^d}G_t(x)dx=t \quad \mbox{for} \quad d \in \{1,2\}.
\end{equation}

We recall that a random field $\{\Phi(t,x);t\geq 0,x\in \bR\}$ is {\em predictable} if it is measurable with respect to the predictable $\sigma$-field on $\Omega \times \bR_{+} \times \bR^d$, which is the minimal $\sigma$-field with respect to which all elementary processes are measurable. An {\em elementary process} is a linear combination of processes of the form
\[
\Phi(t,x)=Y \mathbf 1_{(a,b]}(t) \mathbf 1_{A}(x)
\]
where  $0\leq a<b$, $A \in \cB_b(\bR^d)$ and $Y$ is $\cF_a$-measurable.

In \cite{dalang99}, it was proved that if $b$ and $\sigma$ are globally Lipschitz, and $\mu$ satisfies {\em Dalang's condition}:
\[
(D) \quad
\int_{\bR^d}\frac{1}{1+|\xi|^2}\mu(d\xi)<\infty,
\]
then equation \eqref{SWE} has a unique solution which satisfies: for any $p\geq 2$,
\begin{equation}
\label{def-KTp}
K_{T,p}:=\sup_{(t,x) \in [0,T] \times \bR^d}\bE|u(t,x)|^p<\infty \quad \mbox{for any} \quad T>0.
\end{equation}
In dimension $d=1$, Dalang's condition always holds. 

Since the noise is spatially homogeneous and the initial condition is constant, the process $\{u(t,x)\}_{x\in \bR^d}$ is strictly stationary.

For each fixed $t>0$, we consider the {\em spatial average} of the (centered) solution:
\[
F_R(t)=\int_{B_R} \big(u(t,x)-\bE[u(t,x)] \big)dx,
\]
where $B_R=\{x \in \bR^d;|x|<R\}$. We denote $\sigma_R^2(t)=\bE[F_R^2(t)]$. The goal of this article is to show that $F_R(t)/\sigma_R(t)$ converges in distribution to the standard normal distribution when $R \to \infty$, and give a precise rate of this convergence in the total variation distance. 

An effective approach to analyze the spatial average combines Malliavin calculus with Stein's method for normal approximation. This was developed in \cite{HNV20} for the one-dimensional nonlinear stochastic heat equation driven by space-time white noise, obtaining a quantitative central limit theorem and a functional central limit theorem. In \cite{HNVZ20} this was extended to the stochastic heat equation driven by noise that is white in time and has a Riesz spatial covariance. 

For the stochastic wave equation without drift, \cite{DNZ20} established quantitative and functional central limit theorems in dimension one for Gaussian noise that is white in time and fractional or white in space. In \cite{BNZ} the two-dimensional equation with a Riesz spatial covariance was treated, and \cite{NZ22} considered integrable spatial covariance kernels in dimensions one and two. A central ingredient in these works is a pointwise moment estimate for the first Malliavin derivative in terms of the fundamental solution of the wave equation. Establishing this estimate is particularly delicate in dimension two because the wave kernel is singular at the boundary of its support.

A complementary line of research concerns the hyperbolic Anderson model, for which $\sigma(u)=u$. In this case \cite{BNQZ} obtained moment estimates for increments of the solution's Malliavin derivatives, and applied a second-order Gaussian Poincar\'e inequality to prove quantitative and functional central limit theorems for spatially and temporally colored Gaussian noise. Rough spatial noises were subsequently considered in \cite{BY24} for the time-independent setting, and in \cite{BHWXY25} for noise that is white in time. Both of which also rely on moment estimates for the solution's Malliavin derivatives.

The works on the wave equation cited above do not include a nonlinear drift. Its presence producing a mixture of Lebesgue and stochastic integral terms which pose additional issues. For the one-dimensional stochastic heat equation driven by space-time white noise, \cite{BS26} recently established quantitative and functional central limit theorems in the presence of a nonlinear drift. The analysis uses a second-order Gaussian Poincar\'e inequality together with an estimate for products of heat kernels to control the second Malliavin derivative. The present article addresses the corresponding problem for the stochastic wave equation in dimensions one and two, allowing both integrable and Riesz spatial covariance kernels. In this regime, current methods for estimating the second Malliavin derivative are insufficient when the dimension is two. To overcome this, we instead estimate an integral of the second Malliavin derivative, which is compatible with the Gaussian Poincar\'e inequality. 
\medskip

To formulate our main results, we now specify the assumptions on the spatial covariance of the noise. We consider two cases: \\
(i) $f\in L^1(\bR^d)$ and (D) holds, or $d=1$ and the noise is white; if $d = 2$, we assume in addition that $f \in L^{\ell}(\bR^2)$ for some $\ell > 1$;
\\
(ii) $f(x)=|x|^{-\beta}$ is the Riesz kernel of order $\beta \in (0,d\wedge 2)$. In this case,
\[
\int_{\bR^d}\varphi(x)|x|^{-\beta}dx=c_{d,\beta} \int_{\bR^d}\cF \varphi(\xi) |\xi|^{-(d-\beta)}d\xi, \quad \mbox{for all} \quad \varphi \in \cS(\bR^d),
\]
where
\[
c_{d,\beta}=\pi^{-\frac{d}{2}} 2^{-\beta} \frac{\Gamma(\frac{d-\beta}{2})}{\Gamma(\frac{\beta}{2})}.
\]
The distinction between the integrable kernel and Riesz kernel is pervasive, as it determines many of the rates and normalizations throughout.

\medskip

Below are the main results of this article.

\begin{theorem}
\label{ergodic-th}
For any $t>0$, the process $\{u(t,x)\}_{x\in \bR^d}$ is ergodic.
\end{theorem}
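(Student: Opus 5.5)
Following the approach of \cite{CKNP} (Chen--Khoshnevisan--Nualart--Pu), as used in \cite{HNV20,NZ22,BS26}, we will prove ergodicity through a Poincar\'e-type covariance criterion. Since $\{u(t,x)\}_{x\in\bR^d}$ is strictly stationary, it is ergodic provided that for every $k\geq 1$, every $\zeta^1,\dots,\zeta^k\in\bR^d$ and every $g_j\in C_b^1(\bR)$ (or Lipschitz $g_j$),
\[
\lim_{R\to\infty}\frac{1}{R^{2d}}\,\mathrm{Var}\Big(\int_{B_R}\prod_{j=1}^k g_j\big(u(t,x+\zeta^j)\big)dx\Big)=0 .
\]
To bound the variance we use the Poincar\'e inequality $\mathrm{Var}(F)\le \bE\|DF\|_{\cH}^2$. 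For the functional above,
\[
D_{s,y}F=\int_{B_R}\sum_{j=1}^k\Big(\prod_{i\ne j}g_i(u(t,x+\zeta^i))\Big)g_j'(u(t,x+\zeta^j))\,D_{s,y}u(t,x+\zeta^j)\,dx .
\]
Since the $g_i,g_j'$ are bounded, the variance is controlled by
\[
\sum_{j,\ell}\int_0^t\!\int_{B_R^2}\int_{\bR^{2d}}\big\|D_{s,y}u(t,x+\zeta^j)\big\|_2\big\|D_{s,y'}u(t,x'+\zeta^\ell)\big\|_2 f(y-y')\,dy\,dy'\,dx\,dx'\,ds .
\]

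The key input is a moment estimate for the Malliavin derivative. We first derive the equation
\[
D_{s,y}u(t,x)=G_{t-s}(x-y)\sigma(u(s,y))+\int_s^t\!\!\int G_{t-r}(x-z)\sigma'(u(r,z))D_{s,y}u(r,z)W(dr,dz)+\int_s^t\!\!\int G_{t-r}(x-z)b'(u(r,z))D_{s,y}u(r,z)\,dr\,dz .
\]
Rather than a pointwise bound by $G_{t-s}(x-y)$, which is delicate when $d=2$ and is what the paper says it avoids, it suffices to use a bound that captures localization: $D_{s,y}u(t,x)=0$ unless $|x-y|\le t-s$ (finite speed of propagation, which follows from the support of $G$ by iteration), together with $\sup_{s,x,y}\bE\big[\big|\int \varphi(y)D_{s,y}u(t,x)dy\big|^p\big]$-type integrated estimates, or simply an $L^2$ bound of $D_{s,\cdot}u(t,x)$ in the space $\cH_0$ uniform in $x$. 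The drift term adds a Lebesgue integral, which is handled by a Gronwall argument using boundedness of $b'$ and \eqref{int-G}. With compact support, we obtain a covariance bound of the form $|\,\cdot\,|\lesssim \int_0^t\int\int \mathbf 1_{\{|x-y|\le t\}}\mathbf 1_{\{|x'-y'|\le t\}}f(y-y')\cdots$. Integrating over $x,x'\in B_R$ gives at most $C_t R^d\int_{B_{2R}}f(z)dz$. In case (i) this is $O(R^d)$, and in case (ii) it is $O(R^{2d-\beta})$; both are $o(R^{2d})$. This proves the limit.

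The main obstacle is obtaining a usable moment estimate for $D_{s,y}u(t,x)$ when $d=2$, where it is not a function bounded by $G$ but at best a random distribution in $y$. The plan is to work with $\|D_{s,\cdot}u(t,x)\|_{\cH_0}$, or with the spatially integrated estimates developed later in the paper. We then use Minkowski's inequality inside the $\cH$-norm so that the $f$-pairing is applied to functions supported in balls of radius $t$. If that fails, the fallback is to approximate by a mollified noise, for which pointwise bounds are available, and pass to the limit, since the resulting variance bound is uniform in the mollification.
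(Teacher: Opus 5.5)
Your reduction to $\lim_{R\to\infty}R^{-2d}\mathrm{Var}\big(\int_{B_R}\prod_j g_j(u(t,x+\zeta^j))dx\big)=0$ via the Chen--Khoshnevisan--Nualart--Pu criterion and the Poincar\'e inequality is sound, and is essentially what the paper does in Lemma~\ref{ergodicity}. The gap is that you never produce a usable bound on $\|D_{s,y}u(t,x)\|_2$. You set aside the estimate $\|D_{s,y}u(t,x)\|_2\le C_tG_{t-s}(x-y)$ because of a misreading. The paper does not avoid it: it proves it in Theorem~\ref{key-Du} for $d=1$ \emph{and} $d=2$, in $d=2$ by a recursion for $\sup_x\|D_{r,z}u_n(t,x)\|_p/G_{t-r}(x-z)$ based on \eqref{G*G}. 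What the paper avoids is a pointwise product-kernel bound for the \emph{second} derivative. So your claim that in $d=2$ the derivative is ``at best a random distribution in $y$'' is false.

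None of your substitutes closes the argument:
\begin{itemize}
\item Your count $C_tR^d\int_{B_{2R}}f$, obtained by integrating the indicators over $x,x'\in B_R$, silently treats the omitted factor ``$\cdots$'' as bounded. In $d=2$, $\|D_{s,y}u(t,x)\|_2$ cannot in general be bounded by a multiple of $\mathbf 1_{\{|x-y|\le t\}}$. Its leading term $G_{t-s}(x-y)\sigma(u(s,y))$ blows up like $(t-s-|x-y|)^{-1/2}$ at the edge of the light cone.
\item A bound on $\|D_{s,\cdot}u(t,x)\|_{\cH_0}$ uniform in $x$ gives no decorrelation. Combined with Minkowski or Cauchy--Schwarz in $\cH$, it only yields $\mathrm{Var}\les R^{2d}$. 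The gain has to come from pairing $f(y-y')$ against two functions of $y$ and $y'$ localized near $x$ and $x'$, and for that you need control of $\|D_{s,y}u(t,x)\|_2$ as a function of $y$.
\item Bounds on $\int\varphi(y)D_{s,y}u(t,x)\,dy$ are again dual, $\cH_0$-type information.
\item Theorem~\ref{key-D2u-int} concerns $D^2u$ and itself uses Theorem~\ref{key-Du} as input.
\item Mollifying the noise does not make a pointwise bound easier, and uniformity in the mollification is exactly what would have to be proved.
\end{itemize}

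What you need is a bound that becomes harmless after integrating in $x$. With Theorem~\ref{key-Du},
\[
\int_{B_R}\|D_{s,y}u(t,x+\zeta^j)\|_2\,dx\le C_t\,\varphi_{t,R}(s,y-\zeta^j)\le C_t\,t\,\mathbf 1_{\{|y-\zeta^j|\le R+t\}},
\]
so the $x$-integration removes the singularity of $G$. Then \eqref{Poincare} together with \eqref{LHS-q'} gives
\[
\mathrm{Var}\les\|\mathbf 1_{B_{CR}}\|^2_{L^{2q'}(\bR^d)}\les R^{d/q'}=R^{\gamma}=o(R^{2d}).
\]
This is exactly the paper's proof. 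A genuinely different route that could also work is to prove $\sup_{s,y}\int_{\bR^d}\|D_{s,y}u(t,x)\|_2\,dx<\infty$ directly. One way is an $L^{2p'}(dx)$ recursion modelled on the proof of Theorem~\ref{key-D2u-int}, combined with the support property and H\"older's inequality. That estimate would have to be established; it does not follow from finite speed of propagation or from $\cH_0$ bounds.
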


\begin{theorem}[Limiting covariance]
\label{cov-th}
For any $t_1,t_2>0$,
\begin{equation}
\label{def-K}
K(t_1,t_2):=\lim_{R \to \infty}\frac{1}{R^{\gamma}}\bE[F_R(t_1) F_R(t_2)] \quad \mbox{is finite},
\end{equation}
where
\begin{equation}
\label{def-gamma}
\gamma:=\left\{
\begin{array}{ll} d & \mbox{in case (i)} \\
2d-\beta & \mbox{in case (ii) }.
\end{array} \right.
\end{equation}
In particular, $\sigma_R^2(t) \sim K(t,t) R^{\gamma}$ as $R \to \infty$, for any $t>0$.
\end{theorem}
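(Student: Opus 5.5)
We will compute the covariance through a Clark--Ocone representation. Stationarity in $x$ then reduces the problem to the asymptotics of a single spatial integral. By the Clark--Ocone formula,
\[
u(t,x)-\bE[u(t,x)]=\int_0^t\int_{\bR^d}\bE[D_{s,y}u(t,x)\mid\cF_s]\,W(ds,dy).
\]
The It\^o isometry and the stationarity of $\{u(t,x)\}_x$, together with that of $\{D_{s,y}u(t,x)\}$ jointly in $(x,y)$, give
\[
\bE[F_R(t_1)F_R(t_2)]=\int_{\bR^d}|B_R\cap(B_R-z)|\,\rho_{t_1,t_2}(z)\,dz,
\]
where $\rho_{t_1,t_2}(z)=\mathrm{Cov}(u(t_1,z),u(t_2,0))$ and
\[
\rho_{t_1,t_2}(z)=\int_0^{t_1\wedge t_2}\int_{\bR^{2d}}\bE\big[\Phi_{t_1}(s,z,y)\Phi_{t_2}(s,0,y')\big]f(y-y')\,dy\,dy'\,ds,
\]
with $\Phi_t(s,x,y)=\bE[D_{s,y}u(t,x)\mid\cF_s]$. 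The Malliavin derivative satisfies the linear equation
\[
D_{s,y}u(t,x)=G_{t-s}(x-y)\sigma(u(s,y))+\int_s^t\!\!\int G_{t-r}(x-z)\Sigma_{r,z}D_{s,y}u(r,z)W(dr,dz)+\int_s^t\!\!\int G_{t-r}(x-z)B_{r,z}D_{s,y}u(r,z)\,dr\,dz,
\]
where $\Sigma=\sigma'(u)$ and $B=b'(u)$ are bounded. The key tool is the spatially integrated bound
\[
\Big\|\int_{\bR^d}D_{s,y}u(t,x)\,dx\Big\|_p\le C_{T,p}
\]
uniformly in $s$ and $y$, together with the fact that $D_{s,y}u(t,x)=0$ unless $|x-y|<t-s$. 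This compact-support property comes from finite speed of propagation, which is where the wave kernel's support replaces a pointwise product-kernel bound.

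\textbf{Case (i).} I first show $\int_{\bR^d}|\rho_{t_1,t_2}(z)|\,dz<\infty$. I bound $|\rho|$ by moving absolute values inside, integrating in $z$, and using Minkowski plus the integrated bound on $D u(t_1,\cdot)$ above (conditional expectation is a contraction in $L^p$). This leaves
\[
\int_0^{t}\int\!\!\int \big\|\Phi_{t_2}(s,0,y')\big\|_2\,f(y-y')\,dy\,dy'\,ds,
\]
which is finite since $f\in L^1$ and $\sup_{s,y'}\|\Phi_{t_2}(s,0,y')\|_2\le C\,G$-type bounds integrate in $y'$. Here I use $\int|D_{s,y'}u(t,0)|\,dy'$ rather than a pointwise bound, again via the integrated estimate; this is a symmetric version of the same argument. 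Then $R^{-d}|B_R\cap(B_R-z)|\to\omega_d$ boundedly, and dominated convergence gives
\[
K(t_1,t_2)=\omega_d\int_{\bR^d}\rho_{t_1,t_2}(z)\,dz.
\]
In the white-noise case with $d=1$, $f$ is replaced by $\delta$, and the same argument applies.

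\textbf{Case (ii).} I substitute $z=Rw$ and write
\[
R^{-(2d-\beta)}\bE[F_RF_R]=\int\!\!\int_{B_1^2} R^{\beta}\rho_{t_1,t_2}\big(R(w-w')\big)\,dw\,dw'.
\]
The claim is that $R^{\beta}\rho_{t_1,t_2}(Rv)\to|v|^{-\beta}\int_0^{t_1\wedge t_2}\eta_{t_1}(s)\eta_{t_2}(s)\,ds$, where
\[
\eta_t(s)=\bE\int_{\bR^d}\Phi_t(s,x,y)\,dx=\bE\int D_{s,y}u(t,x)\,dx,
\]
which is independent of $y$. In the formula for $\rho$, $y$ lies within distance $t$ of $Rv$ and $y'$ within distance $t$ of $0$, so $f(y-y')=R^{-\beta}|v+O(1/R)|^{-\beta}$. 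What remains is $\bE[\int\Phi_{t_1}\,dy\cdot\int\Phi_{t_2}\,dy']$ for points at distance about $R|v|$. These two factors decorrelate as $R\to\infty$, so the expectation tends to the product of the means, $\eta_{t_1}(s)\eta_{t_2}(s)$.

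I will prove this decorrelation via a Poincar\'e or covariance bound using the second derivative, or more simply by approximating the factors by functionals of the noise localized in disjoint light cones, which are then independent. Domination for $|v|$ bounded away from $0$ follows from the integrated bounds. Near the diagonal I use $|\rho(Rv)|\lesssim R^{-\beta}|v|^{-\beta}$, which is integrable on $B_1^2$ since $\beta<d$. This gives
\[
K=\int_{B_1^2}|w-w'|^{-\beta}\,dw\,dw'\int_0^{t_1\wedge t_2}\eta_{t_1}\eta_{t_2}\,ds.
\]

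\textbf{Main obstacle.} The hardest step is the integrated $L^p$ bound on $\int D_{s,y}u(t,x)\,dx$ in $d=2$, where the drift term mixes Lebesgue and stochastic integrals. I will integrate the derivative equation in $x$ and use \eqref{int-G}. The drift part becomes $\int_s^t (t-r)\cdots dr$, controlled by Gronwall. The stochastic part is handled by BDG together with Minkowski's inequality, using that $D_{s,y}u(r,\cdot)$ is supported in a ball of radius $r-s$ and that $f$ is locally integrable against that support under (D), with the $L^\ell$ assumption in $d=2$. This yields a closed Gronwall inequality for $\sup_y\|\int D_{s,y}u(t,x)\,dx\|_p$.
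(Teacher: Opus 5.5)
\paragraph{Case (i).} Your case (i) is structurally the paper's argument. The Poincar\'e inequality \eqref{Poincare}, which the paper uses to prove $\int|\rho_{t_1,t_2}|<\infty$, is exactly your Clark--Ocone/It\^o-isometry bound, and both arguments then finish with stationarity and dominated convergence.

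\paragraph{Case (ii).} Here you take a genuinely different route, and it is the sounder one. The paper splits $F_R=A_R+B_R$, keeps only $\bE[A_R(t_1)A_R(t_2)]$, and asserts that the drift and mixed terms are negligible. But $T_{4,R}$ carries no Riesz weight $|y-z|^{-\beta}$. So the decay $|\mathrm{Cov}(b(u(s_1,y)),b(u(s_2,z)))|\lesssim(|y-z|-t_1-t_2)^{-\beta}$ only yields $T_{4,R}=O(R^{2d-\beta})$, and this order is attained. Take $\sigma\equiv1$ and $b(u)=\lambda u$ with $\lambda>0$. Then $D_{s,y}u(t,x)=H_{t-s}(x-y)$, where $H$ is the fundamental solution of $\partial_t^2-\Delta-\lambda$. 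A Fourier computation gives the time factor $\int_0^t\eta_t(s)^2\,ds$ with $\eta_t(s)=\int H_{t-s}=\lambda^{-1/2}\sinh(\sqrt{\lambda}(t-s))$, as your formula predicts, rather than the paper's $\int_0^t(t-s)^2\,ds$. Keeping the drift inside the Clark--Ocone integrand is what captures this. When $b\equiv0$, your $\eta_t(s)$ reduces to $(t-s)\bE[\sigma(u(s,0))]$, and hence to the paper's formula.

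\paragraph{Two gaps.} That said, two steps of your proposal do not work as written.

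\emph{The first-derivative bound.} What you actually need is $\sup_{s,y}\int\|D_{s,y}u(t,x)\|_2\,dx<\infty$ and its analogue in $dy$, i.e.\ an integral of norms. You need it for $\int|\rho_{t_1,t_2}|<\infty$, for the global bound $|\rho_{t_1,t_2}(z)|\lesssim|z|^{-\beta}$ behind your domination, and for the error made in replacing $R^{\beta}|y-y'|^{-\beta}$ by $|v|^{-\beta}$. Your key tool $\|\int D_{s,y}u(t,x)\,dx\|_p\le C$ is weaker, and Minkowski's inequality goes the wrong way to recover the former. The right bound is immediate from Theorem \ref{key-Du} and \eqref{int-G}. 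Your Gronwall sketch would not close in any case, for two reasons:
\begin{itemize}
\item after integrating in $x$, the drift term still has the random, $z$-dependent factor $b'(u(r,z))$ inside the $dz$-integral;
\item BDG produces $\iint\|D_{s,y}u(r,z)\|_p\|D_{s,y}u(r,z')\|_pf(z-z')\,dz\,dz'$, which no $L^1$-type quantity controls when $f$ is unbounded.
\end{itemize}

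\emph{The decorrelation step.} This is the more serious gap. Set $X_R(s)=\int\Phi_{t_1}(s,Rv,y)\,dy$ and $Y(s)=\int\Phi_{t_2}(s,0,y')\,dy'$. The decorrelation $\mathrm{Cov}(X_R(s),Y(s))\to0$ is the one genuinely new ingredient of your route, and it is left unproved.
\begin{itemize}
\item The light-cone option fails in case (ii): under a Riesz covariance the noise on disjoint spatial regions is correlated, so the localized functionals are not independent.
\item The Poincar\'e option works, but only with second-derivative input. The bound $\|D_{r,z}X_R(s)\|_2\le\int\|D^2_{(r,z),(s,y)}u(t_1,Rv)\|_2\,dy$ is supported in $\{|z-Rv|<t_1\}$, so \eqref{Poincare} gives $|\mathrm{Cov}(X_R(s),Y(s))|\lesssim(R|v|-t_1-t_2)^{-\beta}\int_0^sM_{t_1}(r,s)M_{t_2}(r,s)\,dr$. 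Here $M_t(r,s)=\iint\|D^2_{(r,z),(s,y)}u(t,x)\|_2\,dy\,dz$ for any fixed $x$.
\end{itemize}
By joint spatial stationarity, $M_t(r,s)=\iint\|D^2_{(r,z),(s,w)}u(t,x)\|_2\,dx\,dz$ for any fixed $w$. This is bounded by Theorem \ref{key-D2u-int} together with H\"older's inequality on the compact support, as in the proof of Theorem \ref{QCLT}. This second-derivative control, not the first-derivative estimate, is the real obstacle of your approach; in $d=2$ no pointwise bound on $D^2u$ is available.

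With these two repairs your argument proves the statement, with the correct limiting constant.
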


\begin{theorem}[Quantitative Central Limit Theorem]
\label{QCLT}
For any $t>0$, there exists a constant $C_t>0$ depending on $t$ such that for any $R>0$,
\[
d_{TV}\left( \frac{F_R(t)}{\sigma_R(t)},Z\right) \leq C_t R^{\frac{\gamma-2d}{2}},
\]
where $Z$ is a standard normal random variable, $\gamma$ is given by \eqref{def-gamma}, and $d_{TV}$ is the total variation distance.
\end{theorem}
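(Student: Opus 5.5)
The plan is to apply the second-order Gaussian Poincar\'e inequality (Vidotto's version for Gaussian fields with covariance $f$). For $F=F_R(t)/\sigma_R(t)$ it gives
\[
d_{TV}(F,Z)\le \frac{4}{\sigma_R^2(t)}\Big(\int_{[0,t]^6}\int_{\bR^{6d}} \|D^2_{r,z}F_R\,D^2_{\theta,w}F_R\|_2\,\|D_{s,y}F_R\,D_{s',y'}F_R\|_2\,\Lambda\Big)^{1/2},
\]
where $\Lambda$ collects the six factors $f(\cdot-\cdot)$ pairing the time-space variables, as in \cite{BNQZ,BS26}. Here $D_{s,y}F_R=\int_{B_R}D_{s,y}u(t,x)\,dx$ and similarly for $D^2$. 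By Theorem~\ref{cov-th}, $\sigma_R^2(t)\gtrsim R^{\gamma}$ for $R$ large; for small $R$ the bound is trivial since $d_{TV}\le1$. So it suffices to show that the integral above is $O(R^{3\gamma-2d})$. Combined with the $\sigma_R^{-2}\sim R^{-\gamma}$ prefactor, this gives $R^{-\gamma}R^{(3\gamma-2d)/2}=R^{(\gamma-2d)/2}$.

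The first step is the first-derivative bound. Differentiating the mild equation gives
\[
D_{s,y}u(t,x)=G_{t-s}(x-y)\sigma(u(s,y))+\int_s^t\!\!\int G_{t-r}(x-z)\sigma'(u)D_{s,y}u\,W(dr,dz)+\int_s^t\!\!\int G_{t-r}(x-z)b'(u)D_{s,y}u\,dr\,dz.
\]
Since $b'$ is bounded, the drift term can be absorbed by a Gronwall argument. I would use the known pointwise bound $\|D_{s,y}u(t,x)\|_p\le C G_{t-s}(x-y)$ (from \cite{NZ22,BNZ}, extended to include the drift via $\int G=t$ from \eqref{int-G}) only in integrated form. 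Concretely, I would aim for $\|\int_{B_R}D_{s,y}u(t,x)dx\|_p\le C\int_{B_R}G_{t-s}(x-y)dx\le C\,\mathbf 1_{\{|y|<R+t\}}$, which follows directly from the $L^p$ estimate of the integrated derivative via Minkowski's inequality and the Burkholder–Davis–Gundy inequality.

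The second step, which I expect to be the main obstacle, concerns the second derivative. As the introduction warns, a pointwise product-kernel bound $\|D^2_{(r,z),(s,y)}u(t,x)\|_p\lesssim G_{t-r}(x-z)G_{r-s}(z-y)$ (symmetrized) is unavailable in $d=2$ with drift. Instead I would prove a bound on the spatially integrated quantity
\[
\Big\|\int_{B_R}D^2_{(r,z),(s,y)}u(t,x)\,dx\Big\|_p\le C\,\Psi_{t}(r,z;s,y),
\]
where $\Psi$ is supported in $\{|z-y|<t,\ |y|<R+t\}$ and is bounded, or is controlled in the appropriate $L^2(f)$-type norm. The argument writes the equation for $D^2u$, whose inhomogeneous terms are $G\,\sigma'(u)D u$, the stochastic integral with $\sigma''(u)DuDu$, and the drift integral with $b''(u)DuDu$. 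I would then integrate over $x\in B_R$ first, so that the integral of $G_{t-r}(\cdot-z)$ over $B_R$ is at most $t$ and the singularity disappears. The remaining factors are bounded using the step-one bounds, the compact support of $G$ (which confines $|z-y|\le t$), and Gronwall applied to $\sup_{z}$ of the integrated norm. The drift term $b''(u)D_{r,z}uD_{s,y}u$ is handled by Cauchy–Schwarz together with $\int G_{\cdot}\,d\cdot\le t$.

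The final step is to plug these bounds into the Poincar\'e integral. The outer variables range over $|y|\lesssim R$, while the inner ones are localized within distance $t$ of their partners. After integrating out the localized variables with $f$ (using $f\in L^1$, or $f\in L^\ell$ in $d=2$, in case (i), and the Riesz scaling in case (ii)), the integral reduces to $\int_{B_{R+ct}^4}f(y_1-y_2)f(y_2-y_3)f(y_3-y_4)\,dy$ up to constants. This is $O(R^{d})$ in case (i) and $O(R^{4d-3\beta})$ in case (ii), and both equal $R^{3\gamma-2d}$.
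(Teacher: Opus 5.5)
Your outer architecture matches the paper's. You use the second-order Poincar\'e inequality (Proposition~\ref{prop18}, white in time) and $\sigma_R^2(t)\gtrsim R^{\gamma}$ from Theorem~\ref{cov-th}, aim for $\cA_R\lesssim R^{3\gamma-2d}$, and bound the first-derivative factors through Theorem~\ref{key-Du} and Minkowski by $\varphi_{t,R}\le t$ on $B_{2R}$.

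\textbf{The gap is in your second step, which is where the real work lies.} A bound $\|\int_{B_R}D^2_{(r,z),(\theta,w)}u(t,x)\,dx\|_p\le C\Psi$ with $\Psi$ bounded, closed by Gronwall on $\sup_z$, cannot work in $d=2$. For $\theta<r$, the inhomogeneous term $\varphi_{t,R}(r,z)\,\sigma'(u(r,z))\,D_{\theta,w}u(r,z)$ contains $\varphi_{t,R}(r,z)\,\sigma'(u(r,z))\,\sigma(u(\theta,w))\,G_{r-\theta}(z-w)$. For generic $\sigma$ this blows up as $|z-w|\uparrow r-\theta$. Integrating in $x$ removes the singularity of $G_{t-r}(x-z)$ but not this one, so the supremum over $z$ is infinite.

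Independently of this, the recursion does not close in an $x$-integrated, $L^1$-type quantity. After integrating the $D^2u$ equation over $B_R$, the $b'$ and $\sigma'$ terms contain $\varphi_{t,R}(\tau,\eta)D^2u(\tau,\eta)$ with random, $\eta$-dependent coefficients. The BDG bound for the stochastic term, $\int\!\int\|D^2u(\tau,\eta)\|_p\|D^2u(\tau,\eta')\|_pf(\eta-\eta')\,d\eta\,d\eta'$, then needs $\eta\mapsto\|D^2u(\tau,\eta)\|_p$ in $L^{2q}$ with $2q>1$, because $f$ need not be bounded (Riesz kernel, white noise, general $L^1$). Your fallback ``or controlled in the appropriate $L^2(f)$-type norm'' is exactly the missing content.

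The paper supplies this in Theorem~\ref{key-D2u-int}. With $w$ fixed, it integrates in \emph{both} $x$ and $z$, at power $2p'$ with $p'\in(q,1)$ (or $p'=1$ for $d=1$ with integrable or white noise). It proves $\int_{\bR^d}\sup_{\rho\le t\le t'}\int_{\bR^d}\|D^2_{(r,z),(\theta,w)}u(t,x)\|_p^{2p'}\,dx\,dz\le C_T$. Three ingredients make this close:
\begin{itemize}
\item The exponent $2p'<2$ keeps $G^{2p'}$ integrable in $d=2$, which defeats the singularity above.
\item Integrating out $x$ in the noise term produces the kernel $fK_t$, $K_t=G_t*G_t$, which Lemma~\ref{fK int} handles precisely because $p'>q$.
\item Lemmas~\ref{Hol-lem} and~\ref{incr ineq} turn the recursion into a Volterra inequality with kernel $(t'-t)^{p'-1}$, which Gronwall closes.
\end{itemize}

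In the Poincar\'e integral you then never need $D^2F$ pointwise in $(z,w)$. Since $\sup_{z\in B_{2R}}\int\varphi_{t,R}(r,z')f(z-z')\,dz'\lesssim(t-r)R^{\gamma-d}$, the variables $z$ and $y$ are integrated against bounded weights. So only $\int\!\int\|D^2u(t,x_1)\|_4\,dx_1\,dz$ enters. By H\"older on its compact support and Theorem~\ref{key-D2u-int}, this is bounded uniformly in $w$. Then \eqref{LHS-q'} in $(w,w')$ over $B_{2R}$ gives the last factor $R^{\gamma}$.

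Your scaling count $\int f\,f\,f\sim R^{3\gamma-2d}$ is correct. As written, though, it rests on a bounded $\Psi$, which holds only in $d=1$, where $G\le\frac12$ (cf.\ the remark following Theorem~\ref{key-D2u-int}).

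Two minor points:
\begin{itemize}
\item The support constraint is $|z-w|<2t$, not $t$.
\item In $d=2$ the pointwise first-derivative bound with drift needs more than $\int G=t$; it uses the identity \eqref{G*G}, as in Theorem~\ref{key-Du}. You may take that theorem as given.
\end{itemize}
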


\begin{theorem}[Functional Central Limit Theorem]
\label{FCLT}
The process $\{F_R(t)\}_{t\geq 0}$ has a $\delta$-H\"older continuous modification (denoted also $F_R$), for any $\delta \in (0,1/d)$. The process $\{R^{-\gamma/2} F_R(t)\}_{t\geq 0}$ converges in distribution in $C[0,\infty)$ as $R \to \infty$ to a zero-mean Gaussian process $\{\cG(t)\}_{t\geq 0}$ with covariance 
\[
\bE[\cG(t_1) \cG(t_2)]=K(t_1,t_2),
\]
where $\gamma$ is given by \eqref{def-gamma} and $K(t_1,t_2)$ is given by \eqref{def-K}.
\end{theorem}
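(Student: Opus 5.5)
The functional CLT (Theorem \ref{FCLT}) follows from two ingredients: convergence of finite-dimensional distributions, and tightness in $C[0,\infty)$. Tightness will come from a Kolmogorov-type moment bound on temporal increments. Since $C[0,\infty)$ carries the topology of uniform convergence on compacts, it suffices to work on $[0,T]$ for each fixed $T>0$.

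\emph{Step 1: finite-dimensional distributions.} Fix $0<t_1<\dots<t_m$ and consider the vector $R^{-\gamma/2}(F_R(t_1),\dots,F_R(t_m))$. I would use the multivariate second-order Gaussian Poincar\'e inequality, i.e. the multidimensional version of the bound behind Theorem \ref{QCLT}. It controls the distance to a centered Gaussian vector with covariance matrix $(K(t_i,t_j))$ by two quantities:
\begin{itemize}
\item[(a)] the difference between the normalized covariances $R^{-\gamma}\bE[F_R(t_i)F_R(t_j)]$ and $K(t_i,t_j)$, which tends to zero by Theorem \ref{cov-th};
\item[(b)] terms of the form $R^{-\gamma}\big(\bE\|DF_R(t_i)\otimes_1 DF_R(t_i)\|^2\big)^{1/4}\big(\bE\|D^2F_R(t_j)\otimes_1 D^2F_R(t_j)\|^2\big)^{1/4}$, together with the analogous terms with the roles of first and second derivatives exchanged.
\end{itemize}
The terms in (b) were already estimated, for each fixed time, in the proof of Theorem \ref{QCLT} using the spatially integrated bounds for $D^2u$. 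Those estimates are uniform for $t$ in compacts and bilinear in $(t_i,t_j)$, so the same computation shows the terms in (b) are $O(R^{(\gamma-2d)/2})\to0$. If $K$ is degenerate, I would instead use a smooth-test-function version of the bound (distance $d_2$), which does not require an invertible covariance. Either way, the finite-dimensional distributions converge to those of $\cG$.

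\emph{Step 2: increment bound.} The goal is, for $p\ge2$ and $0\le s<t\le T$,
\[
\bE|F_R(t)-F_R(s)|^p\le C_{T,p}R^{p\gamma/2}(t-s)^{p/d}.
\]
By the mild formulation \eqref{mild soln}, the increment $F_R(t)-F_R(s)$ is the sum of a stochastic-integral part and a drift part.

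For the stochastic-integral part, stochastic Fubini expresses it as $\int\!\!\int \big(\int_{B_R}(G_{t-r}(x-y)-G_{s-r}(x-y)\mathbf 1_{r<s})dx\big)\sigma(u(r,y))W(dr,dy)$. I would then apply the Burkholder inequality and bound the resulting $\cH$-norm as in \cite{DNZ20,BNZ,NZ22}. There, the spatial integral over $B_R$ of the difference of wave kernels is handled in Fourier variables, with a $\mu$-integral that produces the factor $R^{\gamma}$. This gives a factor $(t-s)^{p/2}$ in $d=1$, or $(t-s)^{p/2}$ up to logarithms in $d=2$.

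The drift part $\int_{B_R}\int(G_{t-r}-G_{s-r})(x-y)\,[b(u(r,y))-\bE b(u(r,y))]\,dy\,dr\,dx$ is the main obstacle. It is not a martingale, and a naive bound gives order $R^{d}$ rather than $R^{\gamma/2}$. I would first try the Clark--Ocone formula: write $b(u(r,y))-\bE b(u(r,y))=\int_0^r\!\int \bE[D_{\theta,z}b(u(r,y))|\cF_\theta]W(d\theta,dz)$. Exchanging integrals turns the drift part into a single stochastic integral with integrand
\[
\Phi(\theta,z)=\int\!\!\int_{B_R}\int(G_{t-r}-G_{s-r})(x-y)\,\bE[b'(u(r,y))D_{\theta,z}u(r,y)|\cF_\theta]\,dy\,dx\,dr.
\]
Burkholder again reduces the problem to $\|\Phi\|_{\cH}$ in $L^{p/2}$. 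To bound this I would use moment estimates for $Du$ (e.g. $\|D_{\theta,z}u(r,y)\|_p\lesssim G_{r-\theta}(y-z)$ integrated against test functions, or the integrated versions established earlier). Integrating in $y$ and $x$ with $\int G=t$ from \eqref{int-G} then controls $\Phi$ by a deterministic kernel of the same type as in the stochastic-integral part. This yields $R^{\gamma/2}$, and the time regularity of $G_{t-r}-G_{s-r}$ integrated in space gives at least $(t-s)$. In $d=2$, the singularity of $G$ near the light cone is what limits the Hölder exponent to $\delta<1/d$, which is why I would settle for $(t-s)^{p/d}$.

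\emph{Step 3: tightness and conclusion.} Taking $p$ large, Kolmogorov's criterion gives a $\delta$-H\"older modification of $F_R$ for every $\delta<1/d-1/p$, hence for every $\delta\in(0,1/d)$. It also shows that $\{R^{-\gamma/2}F_R\}_R$ is tight in $C[0,T]$, since $F_R(0)=0$. Combined with Step 1, this proves convergence in law to $\cG$ in $C[0,\infty)$.
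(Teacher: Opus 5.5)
\textbf{Overall.} Your architecture matches the paper's. Finite-dimensional convergence comes from a second-order Poincar\'e bound, and tightness plus the H\"older modification come from an $L^p$ increment bound at scale $R^{\gamma/2}$. The drift increment is turned into a stochastic integral by Clark--Ocone (your $\Phi$ is exactly $\bE[D_{\theta,z}(T_2+T_4)\,|\,\cF_\theta]$ in the paper's notation) and then controlled by BDG, Theorem~\ref{key-Du} and Young's inequality.

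Using a multivariate $d_2$ bound instead of Cram\'er--Wold plus Proposition~\ref{prop18} is a legitimate repackaging, and it handles a degenerate $K$ automatically. Be aware, though, of what actually has to be bounded: it is $\mathrm{Var}\,\langle DF_R(t_i),-DL^{-1}F_R(t_j)\rangle$, which reduces to cross-time integrals $\cA_R(t_i,t_j,t_k,t_\ell)\les R^{3\gamma-2d}$ as in the paper's Step~1. The paper never estimates the contraction norms $\|D^2F\otimes_1 D^2F\|$ you wrote down, so those are not already available.

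\textbf{The gap: the time exponent in $d=1$.} You assert that the stochastic-integral part yields only a $p$-th moment of order $(t-s)^{p/2}$ in $d=1$. With Kolmogorov's criterion this gives H\"older exponents $\delta<1/2$, whereas the theorem claims every $\delta\in(0,1/d)=(0,1)$. Your Step~3 then uses $(t-s)^{p/d}=(t-s)^p$, which your Step~2 does not deliver. Tightness and convergence in $C[0,\infty)$ survive, since $(t-s)^{p/2}$ with $p>2$ suffices for the moment criterion; only the H\"older claim is affected.

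The missing ingredient is elementary: it is the pointwise Lipschitz bound behind \eqref{diff est}. For $d=1$ we have $G_\tau=\frac12\mathbf 1_{\{|x|<\tau\}}$, so
\[
|\varphi_{t,R}(r,y)-\varphi_{s,R}(r,y)|\le t-s,
\]
and this difference vanishes for $|y|>R+t$. Then BDG, \eqref{LHS-q'} and $q'=d/\gamma$ give
\[
\|T_1\|_p^2\les\int_0^s\|\varphi_{t,R}(r,\cdot)-\varphi_{s,R}(r,\cdot)\|^2_{L^{2q'}(\bR)}\,dr\les R^{\gamma}(t-s)^2,
\qquad
\|T_3\|_p^2\les\int_s^t\|\varphi_{t,R}(r,\cdot)\|^2_{L^{2q'}(\bR)}\,dr\les R^{\gamma}(t-s)^3 .
\]
No Fourier analysis is needed. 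The same bound on $\varphi_{t,R}-\varphi_{s,R}$ feeds the drift terms through Young's inequality, giving $\|F_R(t)-F_R(s)\|_p\les R^{\gamma/2}(t-s)$ in $d=1$.

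In $d=2$, \eqref{diff est} only provides $(t-s)^{1/2}$. So your claim that the drift gives at least $(t-s)$ is not supported by the available estimates there, but $(t-s)^{1/d}$ is all the theorem requires.
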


The principal difficulty in extending the drift-free results to the present setting comes from the nonlinear drift. Although the pointwise estimate for the first Malliavin derivative remains valid, as shown in Theorem 3.1, the usual bound for the second derivative by a product of fundamental solutions cannot be used for a general nonlinear drift. Indeed, differentiating the drift twice produces a convolution involving $b''(u)D_{r,z}u\,D_{\theta,w}u$, which does not have the time-ordered product structure available in the linear Anderson setting. Instead of seeking an analogous pointwise bound, we exploit the compact support of $G$ and the resulting localization of the Malliavin derivatives to establish the spatially integrated moment estimate in Theorem 3.3. This estimate is sufficient, together with the first-derivative bound and the second-order Gaussian Poincar\'e inequality, to obtain the rate in Theorem \ref{QCLT}.

The drift also introduces difficulties in the limiting-covariance and functional-limit arguments. The covariance analysis requires separate treatment of the drift–drift and mixed contributions, while tightness requires temporal-increment estimates at the fluctuation scale $R^{\gamma/2}$. For the latter, we apply the Clark-Ocone formula to the centered drift increments, converting them into stochastic integrals whose moments can be controlled by the Burkholder–Davis–Gundy inequality and Theorem \ref{QCLT}. This yields bounds compatible with the normalization of the spatial average and completes the tightness argument.

\medskip

{\em Notation:} We use the notation $a_R \les b_R$ if $a_R \leq C b_R$ where $C$ is a constant that does not depend on $(a_R,b_R)$. We denote by $\| \cdot \|_p$ the norm in $L^p(\Omega)$ for any $p\geq 1$.

\section{Preliminaries}

We first introduce the notation and auxiliary results that will be used throughout the paper.

\subsection{Inequalities}

The following is a collection of several Hardy--Littlewood--Sobolev--type inequalities that will be used throughout.

\medskip

If $f \in L^{\ell}(\bR^d)$ for some $\ell \geq 1$, using H\"older's inequality and Young's inequality, we have:
\begin{align}
\nonumber
\left| \int_{\bR^d}\int_{\bR^d}\varphi(x)\psi(y) f(x-y)dxdy \right| & \leq \|\varphi\|_{L^{\frac{2\ell}{2\ell-1}}(\bR^d)} \| \psi * f\|_{L^{2\ell}(\bR^d)} \\
\label{LHS-Lell}
& \leq \|\varphi\|_{L^{\frac{2\ell}{2\ell-1}}(\bR^d)} \|\psi\|_{L^{\frac{2\ell}{2\ell-1}}(\bR^d)} \|f\|_{L^{\ell}(\bR^d)}.
\end{align}

If $f(x)=|x|^{-\beta}$ with $\beta \in (0,d)$, 
by Hardy-Littlewood-Sobolev inequality, we have:
\begin{equation*}
\left|\int_{\bR^d} \int_{\bR^d}\varphi(x) \psi(y) |x-y|^{-\beta} dxdy\right| \leq c_{\beta,d} \|\varphi\|_{L^{\frac{2d}{2d-\beta}}(\bR^d)} \|\psi\|_{L^{\frac{2d}{2d-\beta}}(\bR^d)},
\end{equation*}
where $c_{\beta,d}>0$ is a constant which depends on $(\beta,d)$; see e.g. Remark B.3 of \cite{BS26} for $d=1$, and Section 3.3 of \cite{BNZ} for $d=2$. 

To shorten the exposition, we combine the above inequalities in two distinct ways. First we have:
\begin{equation}
\label{LHS-q}
\left|\int_{\bR^d} \int_{\bR^d}\varphi(x) \psi(y) f(x-y) dxdy\right| \leq c_{f,d} \|\varphi\|_{L^{2q}(\bR^d)}\|\psi\|_{L^{2q}(\bR^d)},
\end{equation}
where $c_{f,d}>0$ is a constant that depends on $(f,d)$, and
\begin{equation}
\label{def-q}
q:=\left\{
\begin{array}{ll} 
1 & \mbox{if $d=1$ and $f \in L^1(\bR)$ or the noise is white}, \\
\frac{\ell}{2\ell-1} & \mbox{if $d = 2$ and $f \in L^1(\bR^2) \cap L^{\ell}(\bR^2)$ for some $\ell >1$}, \\
\frac{d}{2d-\beta} & \mbox{if $f(x)=|x|^{-\beta}$ for some $\beta \in (0,d)$}.
\end{array} \right.
\end{equation}
In the last two cases, $q \in (\frac12,1)$. Second, we have:
\begin{equation}
\label{LHS-q'}
\left|\int_{\bR^d} \int_{\bR^d}\varphi(x) \psi(y) f(x-y) dxdy\right| \leq c_{f,d} \|\varphi\|_{L^{2q'}(\bR^d)}\|\psi\|_{L^{2q'}(\bR^d)},
\end{equation}
where
\begin{equation}
\label{def-q'}
q':=\frac{d}{\gamma}=\left\{
\begin{array}{ll} 
1 & \mbox{if $f \in L^1(\bR^d)$ or $d=1$ and the noise is white}, \\
\frac{d}{2d-\beta} & \mbox{if $f(x)=|x|^{-\beta}$ for some $\beta \in (0,d)$}.
\end{array} \right.
\end{equation}
Inequalities \eqref{LHS-q} and \eqref{LHS-q'} are obviously the same, when $f(x)=|x|^{-\beta}$.
The subtle difference between them is that \eqref{LHS-q'} only requires $f \in L^1(\bR^d)$ even for $d=2$, i.e. we will use \eqref{LHS-q'} for arguments for which the hypothesis $f \in L^{\ell}(\bR^2)$ for some $\ell>1$ is {\em not} needed. Moreover, the exponent $q'$ on the right side of \eqref{def-q'} depends on the parameter $\gamma$, which appears in the QCLT rate, and in the normalization in the FCLT. 

\subsection{Properties of $G$}

In this section, we include some properties of the wave kernel $G$. First, note that
\begin{equation}
\label{int G p}
 \int_{\bR^d} G^p_t(x) dx = 
 \begin{cases} 
  2^{1-p}t & \mbox{for any $p >0$, if $d=1$} \\
  \frac{(2\pi)^{1-p}}{2-p}t^{2-p} & \mbox{for any $p\in (0, 2)$, if $d=2$}. \\
 \end{cases}
\end{equation}
The next result gives an explicit expression for the convolution of two wave kernels.
\begin{lemma}
\label{lemG*G}
Let $d\in \{1,2\}$. For any $t,s>0$ and $x\in \bR^d$, we have:
\begin{equation}
\label{G*G}
(G_t*G_s)(x)=\frac{1}{2}\int_{|t-s|}^{t+s} G_{\theta}(x) d\theta.
\end{equation}
\end{lemma}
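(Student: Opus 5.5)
The natural route is through the Fourier transform, since convolution becomes a product and the wave kernel has the explicit transform $\cF G_t(\xi)=\frac{\sin(t|\xi|)}{|\xi|}$ in both $d=1$ and $d=2$. Both sides of \eqref{G*G} are non-negative integrable functions of $x$: the left side has integral $ts$ by \eqref{int-G}, and the right side has integral $\frac12\int_{|t-s|}^{t+s}\theta\, d\theta = ts$. So it suffices to show that their Fourier transforms agree, and then to upgrade a.e. equality to pointwise equality. The pointwise statement is the step that needs care.

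\textbf{Fourier computation.} On the left we get $\frac{\sin(t|\xi|)\sin(s|\xi|)}{|\xi|^2}$. On the right, Fubini is justified by non-negativity and integrability, which gives
\[
\frac12\int_{|t-s|}^{t+s}\frac{\sin(\theta|\xi|)}{|\xi|}d\theta
=\frac{\cos(|t-s||\xi|)-\cos((t+s)|\xi|)}{2|\xi|^2}.
\]
By the product-to-sum identity $\cos(a-b)-\cos(a+b)=2\sin a\sin b$, together with the evenness of cosine (which lets us replace $|t-s|$ by $t-s$), this equals $\frac{\sin(t|\xi|)\sin(s|\xi|)}{|\xi|^2}$. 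Injectivity of the Fourier transform on $L^1(\bR^d)$ then gives equality for a.e. $x$.

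\textbf{Pointwise equality.} Here I would argue continuity of both sides off a null set, handling each dimension separately.

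For $d=1$, $G_t*G_s$ is continuous as the convolution of an $L^1$ function with an $L^\infty$ function. The right side, $\frac14\int_{|t-s|}^{t+s}\mathbf 1_{\{|x|<\theta\}}d\theta$, is also continuous in $x$, since it is a truncated linear function. Equality therefore holds everywhere.

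For $d=2$, $G_t\in L^p$ for $p<2$ by \eqref{int G p}, so $G_t\in L^{p}$ and $G_s\in L^{p'}$ is not available directly. I would instead use continuity of the right side on $\{|x|\neq |t-s|\}$ and of the left side away from the same circle. The alternative is simply to interpret \eqref{G*G} as an a.e. identity, which is all that is used later, since these kernels only appear inside integrals.

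If one prefers to avoid the pointwise subtlety entirely, a direct check in $d=1$ by computing the length of the interval $(x-t,x+t)\cap(-s,s)$ also works. In $d=2$, however, the Fourier route is the efficient one. The main obstacle is thus only this regularity bookkeeping, not the identity itself.
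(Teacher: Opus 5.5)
Your proposal is correct and is essentially the paper's proof: both compute the Fourier transforms of the two sides and match them. You also make explicit two steps the paper leaves implicit, namely the identity $\cos(a-b)-\cos(a+b)=2\sin a\sin b$ and the injectivity argument on $L^1$. Your pointwise upgrade is complete in $d=1$, but in $d=2$ the continuity of $G_t*G_s$ is only asserted, and even granted it would not reach the circle $|x|=|t-s|$, so there you have, like the paper, an a.e.\ identity. If you want more at little cost: $G_t*G_s$ is lower semicontinuous by Fatou's lemma, and the right side is continuous (except at the origin when $t=s$, where both sides are infinite), so together with a.e.\ equality this gives $\le$ at every $x$, which is the direction used in the proof of Theorem~\ref{key-Du}.
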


\begin{proof}
We show that both sides have the same Fourier transforms. On the left side,
\begin{align*}
\cF(G_t*G_s)(\xi)&= \cF G_t(\xi)\cF G_s(\xi)=\frac{\sin(t|\xi|) \sin(s|\xi|)}{|\xi|^2}.
\end{align*}
On the right side,
\begin{align*}
\frac{1}{2}\int_{|t-s|}^{t+s} \cF G_{\theta}(\xi) d\theta=\frac{1}{2}\int_{|t-s|}^{t+s} \frac{\sin(\theta|\xi|)}{|\xi|} d\theta=\frac{\cos\big(|t-s||\xi|\big)- \cos \big( (t+s)|\xi|\big)}{2|\xi|^2}.
\end{align*}
\end{proof}
For any $t>0$ and $x \in \bR^d$, we denote $K_t(x) := (G_t*G_t)(x)$. By Lemma \ref{lemG*G},
\[
K_t(x) =\frac{1}{2}\int_0^{2t} G_{\theta}(x)d\theta. 
\]
Hence, $t \mapsto K_t(x)$ is non-decreasing on $\bR_{+}$, and $K_t(x)=0$ if $|x| \geq 2t$. By direct calculation,
\begin{align}
\label{def-K1}
 K_t(x) & = \frac{1}{4} (2t - |x|) \, 1_{\{|x| < 2t\}}  \qquad \text{if} \ d = 1 \\
 \label{def-K2}
 K_t(x) & = \frac{1}{4\pi}  \log\left(\frac{2t+\sqrt{4t^2-|x|^2}}{|x|}\right)  1_{\{|x|<2t\}}\qquad \text{if} \ d = 2.
\end{align}
The following lemma provides an upper bound similar to inequality \eqref{LHS-q}, but for the kernel $fK_t^{\alpha}$ replacing $f$.
\begin{lemma} \label{fK int}
Let $ t \in [0, T]$ and $\alpha>0$. Then 
\begin{enumerate}
 \item[(i)] If $d = 1$ and $f \in L^1(\bR)$ or the noise is white, then
\[
\int_{\bR} \int_{\bR} \varphi(x)\psi (y) f(x-y) K_t^{\alpha}(x-y) dx dy  \leq T^{\alpha} \|f\|_{L^1(\bR)} \|\varphi\|_{L^{2}(\bR)}\|\psi\|_{L^{2}(\bR)}.
\]
\item[(ii)] If $d\in \{1,2\}$ and either (a) $f \in L^\ell(\bR^d)$ for some $\ell >1$, or (b) $f(x) = |x|^{-\beta}$ for some $\beta \in (0,d)$, then 
\[
\int_{\bR^d} \int_{\bR^d} \varphi(x)\psi(y) f(x-y)K_t^{\alpha}(x-y)  dx dy \leq C_{T,\alpha,p',q} \|\varphi\|_{L^{2p'}(\bR^d)}\|\psi\|_{L^{2p'}(\bR^d)},
\]
for any $p'\in (q,1)$, where $q=\frac{\ell}{2\ell-1}$ in case (a), and $q=\frac{d}{2d-\beta}$ in case (b). 
\end{enumerate}
\end{lemma}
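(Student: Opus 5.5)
Set $h_t:=fK_t^{\alpha}$, so the left side is $\int\!\int \varphi(x)\psi(y)h_t(x-y)\,dx\,dy$. The plan is to bound $h_t$ in a suitable Lebesgue space and then apply the same H\"older--Young argument that gave \eqref{LHS-Lell}, with $h_t$ in place of $f$. Since $K_t\ge 0$ and $f\ge 0$, we may assume $\varphi,\psi\ge 0$ (otherwise replace them by $|\varphi|,|\psi|$). By Lemma \ref{lemG*G}, $K_t(x)=\frac12\int_0^{2t}G_\theta(x)d\theta$, which is non-decreasing in $t$. Hence $K_t\le K_T$ for all $t\in[0,T]$, and it suffices to treat $t=T$.

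\textbf{Case (i).} Here $d=1$ and, by \eqref{def-K1}, $K_T(x)=\frac14(2T-|x|)1_{\{|x|<2T\}}\le T/2\le T$. So $h_T\le T^{\alpha}f$, and \eqref{LHS-Lell} with $\ell=1$ (Young with $\|\psi*f\|_{L^2}\le\|\psi\|_{L^2}\|f\|_{L^1}$ and Cauchy--Schwarz) gives the claim. For white noise, $f=\delta_0$ and the form reduces to $K_T^{\alpha}(0)\int\varphi\psi\le T^{\alpha}\|\varphi\|_{L^2}\|\psi\|_{L^2}$, using the convention $\|f\|_{L^1}=1$.

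\textbf{Case (ii).} In $d=1$, $K_T$ is again bounded by $T$. In $d=2$, \eqref{def-K2} shows that $K_T$ is supported in $B_{2T}$ with only a logarithmic singularity at the origin, so $K_T^{\alpha}\in L^r(B_{2T})$ for every $r<\infty$. Given $p'\in(q,1)$, define $\ell'$ by $p'=\frac{\ell'}{2\ell'-1}$; since $t\mapsto \frac{t}{2t-1}$ is decreasing, $q<p'<1$ forces $1<\ell'<\ell$ in case (a). By H\"older,
\[
\|h_T\|_{L^{\ell'}}\le \|f1_{B_{2T}}\|_{L^{\ell}}\,\|K_T^{\alpha}\|_{L^{s}(B_{2T})},\qquad \frac1{\ell'}=\frac1\ell+\frac1s,
\]
with $s<\infty$, and the right side is finite. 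In case (b), $f1_{B_{2T}}=|x|^{-\beta}1_{B_{2T}}\in L^{\ell}$ for every $\ell<d/\beta$. The condition $p'>\frac{d}{2d-\beta}$ translates to $\ell'<d/\beta$, so we can choose $\ell\in(\ell',d/\beta)$ and proceed exactly as in case (a). In either case $h_T\in L^{\ell'}(\bR^d)$, and \eqref{LHS-Lell} applied with $h_T$ and $\ell'$ yields the bound with constant $C_{T,\alpha,p',q}=\|h_T\|_{L^{\ell'}}$, because $\frac{2\ell'}{2\ell'-1}=2p'$.

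The main obstacle is this exponent bookkeeping in $d=2$: the logarithmic singularity of $K_T$ is exactly what costs the strict inequality $p'>q$ and rules out the endpoint $p'=q$. One must also check that the compact support of $K_T$ makes it unnecessary for $f$ to lie in $L^{\ell}$ globally, which matters in the Riesz case where $f\notin L^{\ell}(\bR^d)$ for any $\ell$.
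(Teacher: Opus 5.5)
Your proof is correct and follows essentially the paper's route: H\"older plus Young with $fK_t^{\alpha}$ placed in $L^{b}$, $b=\frac{p'}{2p'-1}$ (your $\ell'$), then a H\"older split of $fK_t^{\alpha}$ that uses the compact support of $K_t$ and its at most logarithmic singularity; in both arguments the condition $p'>q$ is exactly what makes this split possible. The differences are cosmetic. You reduce to $t=T$ via monotonicity of $t\mapsto K_t$ instead of using the uniform bound $K_t(x)\le C_{T,\varepsilon}|x|^{-\varepsilon}$, you treat the Riesz case by splitting off $|x|^{-\beta}\mathbf 1_{B_{2T}}\in L^{\ell}$ rather than integrating $|x|^{-(\beta+\alpha\varepsilon)b}$ directly, and you spell out the white-noise case, which the paper leaves implicit.
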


\begin{proof} 
(i) By \eqref{def-K1}, it follows that $K_t(x) \leq T$ for any $x \in \bR$ and $t \in [0,T]$. Therefore, 
\[
 \int_{\bR} \int_{\bR} \varphi(x) \psi(y)  f(x-y)K_t^{\alpha}(x-y) dx dy \leq  T^{\alpha} \int_{\bR} \int_{\bR} \varphi(x) \psi(y) f(x-y) dx dy.
\]
The conclusion follows by applying \eqref{LHS-Lell} with $\ell=1$.  

(ii) Using \eqref{def-K1} and \eqref{def-K2}, it follows that for all $\varepsilon > 0$,
\begin{equation}
\label{bound-K}
 K_t(x) \leq C_{T,\epsilon} |x|^{-\varepsilon}.
\end{equation}

We fix $p' \in (q,1)$. We apply H\"older's inequality, followed by Young's inequality:
\begin{align*}
I &:=\int_{\bR^d} \int_{\bR^d}  \varphi(x) \psi(y) f(x-y)K_t^{\alpha}(x-y) dx dy 
= \int_{\bR^d} (\varphi * fK_t^{\alpha})(y) \psi(y) dy \\
& \quad \leq \| \varphi* fK_t^n  \|_{L^a(\bR^d)} \|\psi \|_{L^{2p'}(\bR^d)}  \quad \text{with}\  \frac{1}{a} + \frac{1}{2p'} = 1, \\
& \quad \leq  \| fK_t^n\|_{L^{b}(\bR^d)}  \|\varphi \|_{L^{2p'}(\bR^d)}\|\psi \|_{L^{2p'}(\bR^d)} \quad  \text{with } \  \frac{1}{b} +\frac{1}{2p'} = 1 + \frac{1}{a} = 2 - \frac{1}{2p'} .
\end{align*}
Note that $b=\frac{p'}{2p'-1}>1$ since $p'\in (\frac12,1)$. 

To estimate $\|K_t^{\alpha}\|_{L^{c}(\bR^d)}$, we consider separately the two cases.
In case (a), we apply H\"older's inequality with $\frac{1}{p_1}+\frac{1}{p_2}=1$ and $p_1=\frac{\ell}{b}>1$. (This means that $\frac{1}{\ell}<\frac{1}{b}=2 - \frac{1}{p'}$, i.e. $p'>\frac{\ell}{2\ell -1}=q$.) Then
\begin{align*}
\| fK_t^{\alpha}\|_{L^{b}(\bR^d)}  & \leq \| f\|_{L^\ell(\bR^d)} \|K_t^{\alpha}\|_{L^{c}(\bR^d)} \qquad \text{with } \frac{1}{l} + \frac{1}{c} = \frac{1}{b}.
\end{align*}
We use \eqref{bound-K} and the compact support of $K_t$. Then,
$\|K_t^{\alpha}\|_{L^{c}(\bR^d)} \leq C_{T,\varepsilon}\int_{|x| < 2t} |x|^{-\alpha \varepsilon c} dx < \infty$, provided that we choose $\varepsilon > 0$ so that $\alpha \varepsilon  c< d$. This implies that
\[
I \leq C_{T,\varepsilon,\alpha,c,d} \| f\|_{L^\ell(\bR^d)} \|\varphi\|_{L^{2p'}(\bR^d)}\|\psi\|_{L^{2p'}(\bR^d)}. 
\]

In case (b), using \eqref{bound-K} and the compact support of $K_t$, we have:
\[
\|fK_t^{\alpha}\|_{L^b}^b \leq C_{T,\varepsilon} \int_{\{|x|<2t\}}  |x|^{-\beta b-\alpha\varepsilon  b} dx<  \infty,
\]
provided that we choose $\varepsilon>0$ such that $(\beta+\alpha\varepsilon ) b< d$. This means that 
$$2-\frac{1}{p'}=\frac{1}{b}>\frac{\beta+\alpha \varepsilon}{d} \quad \mbox{i.e.} \quad \frac{\alpha \varepsilon}{d}< \frac{2d-\beta}{d}-\frac{1}{p'}.$$
This is possible since $p'>\frac{d}{2d-\beta} =q$. Then,
\[
I \leq C_{T,\varepsilon,\alpha,\beta,b,d} \|\varphi\|_{L^{2p'}(\bR^2)}\|\psi\|_{L^{2p'}(\bR^2)}. 
\]
\end{proof}

\bigskip 

We also have the follwing inequalities for the wave kernel $G$ integrated over a ball of radius $R$. 
For any $R>0$, $0\leq r \leq t$ and $y \in \bR^d$, we denote:
\begin{equation}
\label{def-var}
\varphi_{t,R}(r,y)=\int_{B_R} G_{t-r}(x-y)dx.
\end{equation}

Observe that, $0 \leq \varphi_{t,R}(r,y) \leq t-r$ and $\varphi_{t,R}(r,y)$ is supported on $\{|y| \leq R + t - r \}$. In particular, when $R \geq t$, $\varphi_{t,R}(r,y)$ is supported on $B_{2R}$. Therefore, for any $p\geq 1$, $0\leq r\leq t$ and $R>t$,
\begin{equation}
\label{est}
 \| \varphi_{t,R}(r,\cdot) \|_{L^p(\bR^d)} \les R^{d/p} (t-r).
\end{equation}
From the proof of Lemma 4.3 in \cite{NZ22}, we know that
\[
\Big|\varphi_{t,R}(r,y) - \varphi_{s,R}(r,y) \Big| \les (t-s)^{1/d} \mathbf 1_{\{|y|\leq R+t\}} \quad \mbox{for any $0\leq r\leq s\leq t$}.
\]
As a consequence, for any $p\geq 1$, $0\leq r\leq s\leq t$ and $R>t$,
\begin{equation}
\label{diff est}
 \| \varphi_{t,R}(r,\cdot) - \varphi_{s,R}(r,\cdot)\|_{L^p(\bR^d)} \les R^{d/p} (t-s)^{1/d}.
\end{equation}

\subsection{Malliavin Calculus}

In this section, we include some background material about Malliavin calculus.

\medskip

Let $\bL^{1,2}$ be the class of processes $u \in L^2(\Omega;\cH)$ such that $u(t,x)\in \bD^{1,2}$ for all $(t,x)\in \bR_{+}\times \bR^d$, and there exists a measurable modification of
$\{D_{t,x}u(s,y);(t,x),(s,y)\}$ such that
\[
\bE \int_{(\bR_+ \times \bR^d)^2} D_{r,y}u(t,x)D_{r,y'}u(t,x') f(x-x')f(y-y') dxdx'dydy'drds < \infty.
\]

The operators $D$ and $\delta$ satisfy the following Heisenberg commutation rule.
\begin{proposition}[Proposition 1.3.8 of \cite{nualart06}] If $u \in \bL^{1,2}$, $\{D_{t,x}u(s,y)\}_{(s,y)} \in {\rm Dom}(\delta)$ for almost all $(t,x)$, and there exists a measurable modification of $\{\int D_{t,x}u(s,y)W(\delta s,\delta y)\}_{(t,x)}$ which is in $L^2(\Omega;\cH)$, then $\delta(u) \in \bD^{1,2}$ and the following identity holds, 
\begin{equation}
\label{Heisenberg}
D(\delta(u))=u+\delta(Du).
\end{equation}
\end{proposition}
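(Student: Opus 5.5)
The plan is to prove the commutation rule $D(\delta(u))=u+\delta(Du)$ first for elementary (simple) $\cH$-valued random variables, and then pass to the general case by approximation, using the closedness of $D$ and $\delta$.

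\textbf{Step 1: simple elements.} Take $u=\sum_{j=1}^n F_j h_j$ with $F_j$ smooth cylindrical random variables and $h_j\in\cH$. The standard formula gives
\[
\delta(u)=\sum_j F_j W(h_j)-\sum_j\langle DF_j,h_j\rangle_{\cH}.
\]
Apply $D$ using the product rule and $D W(h_j)=h_j$:
\[
D\delta(u)=\sum_j F_j h_j+\sum_j W(h_j)DF_j-\sum_j D\langle DF_j,h_j\rangle_{\cH}.
\]
On the other hand, $Du=\sum_j DF_j\otimes h_j$. Applying $\delta$ in the first variable gives
\[
\delta(Du)=\sum_j W(h_j)DF_j-\sum_j \langle D^2F_j,h_j\rangle_{\cH}.
\]
Since $D\langle DF_j,h_j\rangle_{\cH}=\langle D^2F_j,h_j\rangle_{\cH}$, comparing the two expressions yields $D\delta(u)=u+\delta(Du)$ for such $u$.

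\textbf{Step 2: approximation.} For general $u\in\bL^{1,2}$, choose simple $u_n\to u$ in the $\bL^{1,2}$ norm; this is possible by density. From Step 1 together with the energy identity
\[
\bE[\delta(v)^2]=\bE\|v\|_{\cH}^2+\bE\|Dv\|_{\cH\otimes\cH}^2
\]
(valid on $\bL^{1,2}$, which is contained in ${\rm Dom}(\delta)$), we get $\delta(u_n)\to\delta(u)$ in $L^2(\Omega)$.

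The hard step is controlling $D\delta(u_n)=u_n+\delta(Du_n)$. Convergence of $Du_n$ only holds in $L^2(\Omega;\cH\otimes\cH)$, which does not by itself give convergence of $\delta(Du_n)$. The hypotheses let us bypass this with a duality argument. For any smooth cylindrical $G$ and $h\in\cH$, duality and Step 1 give
\[
\bE[\delta(u)\,\delta(Gh)]=\bE\big[\langle u,Gh\rangle_{\cH}+\langle \delta(Du),Gh\rangle_{\cH}\big].
\]
To establish this, I would pass to the limit in the identity for $u_n$ using
\[
\bE[\delta(u_n)\,\delta(Gh)]=\bE\langle u_n, D(\delta(Gh))\rangle_{\cH}
\]
and $D(\delta(Gh))=Gh+\delta(DG\otimes h)$. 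Both sides are continuous in $u_n$ in the $\bL^{1,2}$ norm, and we identify the limit using the hypothesis that $\{D_{t,x}u\}\in{\rm Dom}(\delta)$ with $\delta(D_{t,x}u)$ measurable and in $L^2(\Omega;\cH)$.

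\textbf{Step 3: conclusion.} Step 2 shows that the functional $G h\mapsto\bE[\delta(u)\delta(Gh)]$ is represented by the $L^2(\Omega;\cH)$ element $u+\delta(Du)$. By the characterization of $\bD^{1,2}$ through the duality with $\delta$ (closability of $D$), it follows that $\delta(u)\in\bD^{1,2}$ and $D\delta(u)=u+\delta(Du)$.

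Alternatively, one could just cite Proposition 1.3.8 of \cite{nualart06}, where this is proved in exactly this form.
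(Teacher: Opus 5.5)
The paper gives no proof of its own. It quotes Proposition 1.3.8 of \cite{nualart06}, which is what your closing sentence suggests, and the proof there is a Wiener chaos computation. With $\delta(u)=\sum_n I_{n+1}(\widetilde f_n)$, the symmetrization identity splits $(n+1)\widetilde f_n(\cdot,t,x)$ into the $n$-th kernel of $u(t,x)$ plus that of $\delta(D_{t,x}u)$. The hypothesis $\delta(Du)\in L^2(\Omega;\cH)$ then yields $\sum_n(n+1)(n+1)!\|\widetilde f_n\|^2<\infty$, i.e. $\delta(u)\in\bD^{1,2}$.

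Your duality route is a legitimate alternative, but as written its only nontrivial step is asserted rather than proved. The approximation by $u_n$ is in fact unnecessary. Since $\delta(Gh)=GW(h)-\langle DG,h\rangle_{\cH}$ is smooth and $u\in{\rm Dom}(\delta)$, duality gives directly $\bE[\delta(u)\delta(Gh)]=\bE\langle u,Gh\rangle_{\cH}+\bE\langle u,\delta(DG\otimes h)\rangle_{\cH}$. So the energy identity is not needed; as you state it, it is also wrong, since it carries the trace term $\bE\langle Dv,(Dv)^{*}\rangle_{\cH\otimes\cH}$ rather than $\bE\|Dv\|^2$. Everything then rests on
\[
\bE\langle u,\delta(DG\otimes h)\rangle_{\cH}=\bE\langle \delta(Du),Gh\rangle_{\cH},
\]
which your phrase ``we identify the limit using the hypothesis'' leaves unproved. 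Both sides equal $\bE\langle D^hu,DG\rangle_{\cH}$, where $h$ is paired with the derivative variable of $Du$. On the left, expand in an orthonormal basis $(e_i)$ of $\cH$ and use duality for $\langle u,e_i\rangle_{\cH}\in\bD^{1,2}$ against $\delta(\langle DG,e_i\rangle_{\cH}h)$. On the right, apply Fubini, which is where the $L^2(\Omega;\cH)$ version of $\delta(D_{t,x}u)$ is used. Then use $\bE[G\,\delta(D_{t,x}u)]=\bE\langle DG,D_{t,x}u\rangle_{\cH}$ for a.e. $(t,x)$, which is where $D_{t,x}u\in{\rm Dom}(\delta)$ enters.

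Step 3 also needs more than closability of $D$. Closedness gives $D=\delta^{*}$, so you may conclude $F\in\bD^{1,2}$ with $DF=\eta$ from $\bE[F\delta(v)]=\bE\langle\eta,v\rangle_{\cH}$ for \emph{all} $v\in{\rm Dom}(\delta)$. You only test against $v=Gh$, so one more fact is required. Either invoke that such elements span a core for $\delta$, or argue directly with $G=I_m(\phi)$. Since $\delta(I_m(\phi)h)=I_{m+1}(\widetilde{\phi\otimes h})$, the $m$-th chaos kernel of $\eta$ must be $(m+1)f_{m+1}$, where $(f_n)$ are the kernels of $F$. Hence $\sum_n n\,n!\|f_n\|^2=\bE\|\eta\|_{\cH}^2<\infty$ and $DF=\eta$.

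With these two points supplied, your argument is a complete proof. Compared with the cited one, it replaces the kernel bookkeeping by two duality computations and shows exactly where the hypotheses on $\delta(D_{t,x}u)$ enter. A minor point: in Step 1, $\delta$ acts on the $h_j$ slot of $Du$, not the first variable, although the formula you wrote is the correct one.
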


The Malliavin derivative satisfies the following chain rule.

\begin{proposition}[Proposition 1.2.3 of \cite{nualart06}] 
For any $F \in \bD^{1,2}$ and $\varphi \in C^1(\bR)$ with $\varphi'$ bounded,
\begin{equation}
\label{chain}
D\varphi(F)=\varphi'(F)DF.
\end{equation}
\end{proposition}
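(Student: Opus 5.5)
The plan is the standard density argument: verify the identity on smooth cylindrical random variables with a smooth test function, then pass to the limit twice using that $D$ is a closed operator from $L^2(\Omega)$ into $L^2(\Omega;\cH)$, with domain $\bD^{1,2}$ equal to the closure of the smooth random variables under $\|F\|_{1,2}^2=\bE|F|^2+\bE\|DF\|_{\cH}^2$.

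\emph{Step 1 (smooth case).} Take $F=g(W(h_1),\ldots,W(h_n))$ with $g\in C^\infty$ such that $g$ and all its derivatives have polynomial growth, and $h_i\in\cH$. Take $\varphi\in C^\infty(\bR)$ with all derivatives bounded. Then $\varphi\circ g$ is again smooth with polynomially growing derivatives, so $\varphi(F)$ is a smooth random variable. The definition $DF=\sum_i \partial_i g(W(h_1),\ldots,W(h_n))h_i$ and the classical chain rule $\partial_i(\varphi\circ g)=\varphi'(g)\,\partial_i g$ give $D\varphi(F)=\varphi'(F)DF$ directly.

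\emph{Step 2 (general $\varphi$ by mollification).} For $\varphi\in C^1(\bR)$ with $\varphi'$ bounded, set $\varphi_\varepsilon=\varphi*\rho_\varepsilon$ with $\rho_\varepsilon$ a standard mollifier. Each $\varphi_\varepsilon$ is smooth, and every derivative $\varphi_\varepsilon^{(k)}=\varphi'*\rho_\varepsilon^{(k-1)}$ is bounded (with an $\varepsilon$-dependent constant), so Step 1 applies to $\varphi_\varepsilon$. As $\varepsilon\to0$:
\begin{itemize}
\item $|\varphi_\varepsilon-\varphi|\le \varepsilon\|\varphi'\|_\infty$ uniformly, hence $\varphi_\varepsilon(F)\to\varphi(F)$ in $L^2(\Omega)$;
\item $\varphi_\varepsilon'=\varphi'*\rho_\varepsilon\to\varphi'$ pointwise, because $\varphi'$ is continuous, and $|\varphi_\varepsilon'|\le\|\varphi'\|_\infty$.
\end{itemize}
By dominated convergence, $\varphi_\varepsilon'(F)DF\to\varphi'(F)DF$ in $L^2(\Omega;\cH)$. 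Closedness of $D$ then gives the identity for smooth $F$ and this $\varphi$.

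\emph{Step 3 (general $F$).} For $F\in\bD^{1,2}$, choose smooth $F_n$ with $F_n\to F$ in $\|\cdot\|_{1,2}$, and pass to a subsequence with $F_n\to F$ a.s.
\begin{itemize}
\item Since $\varphi$ is Lipschitz, $\varphi(F_n)\to\varphi(F)$ in $L^2(\Omega)$.
\item Split the derivative term as
\[
\varphi'(F_n)DF_n-\varphi'(F)DF=\varphi'(F_n)(DF_n-DF)+\big(\varphi'(F_n)-\varphi'(F)\big)DF .
\]
The first term tends to $0$ in $L^2(\Omega;\cH)$ because $\varphi'$ is bounded. The second term tends to $0$ by dominated convergence: $\varphi'$ is continuous, $F_n\to F$ a.s., and the integrand is dominated by $4\|\varphi'\|_\infty^2\|DF\|_{\cH}^2\in L^1(\Omega)$.
\end{itemize}
Closedness of $D$ yields $\varphi(F)\in\bD^{1,2}$ and $D\varphi(F)=\varphi'(F)DF$.

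The only delicate point is Step 3. Convergence of $\varphi'(F_n)$ is merely pointwise, so one must extract the a.s. convergent subsequence and use the domination by $\|DF\|_{\cH}^2$ rather than any $L^2$-continuity of $\varphi'$, which is unavailable since $\varphi'$ is only continuous and bounded.
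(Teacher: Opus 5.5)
Your proof is correct. It is the standard density argument for Proposition 1.2.3 of \cite{nualart06}: approximate $F$ by smooth random variables, mollify $\varphi$, and pass to the limit using closedness of $D$ together with dominated convergence along an a.s.\ convergent subsequence, with domination by $\|DF\|_{\cH}^2$ handling the merely pointwise convergence of $\varphi'(F_n)$. The paper gives no proof of its own and simply cites that reference, where the result is stated more generally, for $\varphi:\bR^m\to\bR$ and components in $\bD^{1,p}$, $p\ge 1$.
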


Similarly to Corollary 1.2.1 of \cite{nualart06}, if $F\in \bD^{1,2}$ is $\cF_t$-measurable, then 
\begin{equation}
\label{Mal-zero}
D_{r,z}F=0 \quad \mbox{for all $r>t$ and $z \in \bR^d$}.
\end{equation}

Finally, we have the following version of Poincar\'e inequality (see e.g. relation (2.3) of \cite{NZ22}).

\begin{proposition}
For any $F,G \in \bD^{1,2}$,
\begin{equation}
\label{Poincare}
|{\rm Cov}(F,G)| \leq \int_{\bR_+} \int_{\bR^{2d}} \|D_{r,y}F \|_2 \|D_{r,y'}G\|_2 f(y-y') dydy'dr.
\end{equation}
\end{proposition}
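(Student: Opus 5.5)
The plan is to derive \eqref{Poincare} from the Clark--Ocone formula, the It\^o isometry, and two elementary inequalities. We may assume that $DF$ and $DG$ are (a.s.) measurable functions of $(r,y)$ and that the right-hand side of \eqref{Poincare} is finite, since otherwise there is nothing to prove. When the noise is white in dimension one, we read $f(y-y')dydy'$ as $\delta(y-y')dydy'$, so the right-hand side becomes $\int\int \|D_{r,y}F\|_2\|D_{r,y}G\|_2\,dy\,dr$; the argument below is unchanged.

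\emph{Step 1 (Clark--Ocone).} For $F\in\bD^{1,2}$ we use the representation
\[
F-\bE[F]=\int_0^\infty\int_{\bR^d}\bE\big[D_{r,y}F\,|\,\cF_r\big]\,W(dr,dy).
\]
The integrand $\{\bE[D_{r,y}F|\cF_r]\}$ is taken in a predictable version; it lies in $L^2(\Omega;\cH)$ because conditional expectation is a contraction. The same representation holds for $G$. If only the version for smooth cylindrical functionals is taken for granted, we first prove it for those. We then extend it to $\bD^{1,2}$ by density: both sides are continuous in the $\bD^{1,2}$ norm, the right side via the It\^o isometry and the contraction property.

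\emph{Step 2 (It\^o isometry).} Multiplying the two representations and taking expectations gives
\[
{\rm Cov}(F,G)=\int_0^\infty\int_{\bR^{2d}}\bE\Big(\bE[D_{r,y}F|\cF_r]\,\bE[D_{r,y'}G|\cF_r]\Big)f(y-y')\,dy\,dy'\,dr.
\]
Here we use the form of the inner product in $\cH$ for function-valued integrands, which is legitimate because $f\ge 0$ and the resulting integral is absolutely convergent (see Step 3).

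\emph{Step 3 (estimate).} Since $f\geq 0$, we take absolute values inside the integral. For each $(r,y,y')$ we apply Cauchy--Schwarz in $\Omega$, and then the conditional Jensen inequality $\|\bE[X|\cF_r]\|_2\leq \|X\|_2$. This gives the pointwise bound
\[
\Big|\bE\big(\bE[D_{r,y}F|\cF_r]\,\bE[D_{r,y'}G|\cF_r]\big)\Big|\le \|D_{r,y}F\|_2\,\|D_{r,y'}G\|_2 .
\]
Integrating this bound against $f(y-y')\,dy\,dy'\,dr$ yields \eqref{Poincare}. The same bound also gives the absolute convergence needed to justify Fubini's theorem in Step 2.

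The main obstacle is technical: making the It\^o isometry in Step 2 rigorous when $\cH$ contains distributions and $f$ may be singular, as for the Riesz kernel or white noise. To handle it, I would approximate $f$ by $f_\varepsilon=f*p_\varepsilon$, where $p_\varepsilon$ is a Gaussian mollifier. Equivalently, one can truncate the spectral measure $\mu$. For $f_\varepsilon$ every step above is elementary. We then let $\varepsilon\to0$ using monotone or dominated convergence on the right-hand side. Under the standing finiteness assumption, the integrals $\int\int \|D_{r,y}F\|_2\|D_{r,y'}G\|_2\,(f*p_\varepsilon)(y-y')\,dy\,dy'$ converge to the corresponding integral with $f$, for instance via the Fourier representation of the inner product in terms of $\mu$.
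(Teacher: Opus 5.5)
Your argument is correct and is essentially the approach behind relation (2.3) of \cite{NZ22}, which the paper cites rather than proves. There, Clark--Ocone and the It\^o isometry express ${\rm Cov}(F,G)$ through the predictable projections $\bE[D_{r,y}F|\cF_r]$ and $\bE[D_{r,y'}G|\cF_r]$, and then $f\geq 0$, Cauchy--Schwarz in $\Omega$ and the $L^2$-contractivity of conditional expectation give the bound; your mollification step is only extra care for writing the $\cH$-inner product of function-valued integrands as an integral against $f(y-y')$, which the cited sources take for granted.
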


\section{Estimates on Malliavin Derivatives}

In this section, we provide some key estimates for the Mallivain derivatives of the solution $u$, which will be used in the proofs of the main results.

We denote by $(u_n)_{n\geq 0}$ the sequence of Picard iterations, given by $u_0(t,x)=1$ and
\begin{equation}
\label{def-Picard}
u_{n+1}(t,x)=1+\int_0^t \int_{\bR^d}G_{t-s}(x-y)b(u_n(s,y))dyds+\int_0^t \int_{\bR^d}G_{t-s}(x-y)\s(u_n(s,y))W(ds,dy).
\end{equation}

\begin{theorem}
\label{key-Du}
For any $0\leq r\leq t \leq T$, $x,z \in \bR^d$ and $p\geq 2$,
\[
\|D_{r,z}u(t,x)\|_p \leq C_{T,p} G_{t-r}(x-z),
\]
where $C_{T,p}>0$ is a constant that depends on $(T,p,q,\|b'\|_\infty,\|\sigma'\|_\infty)$.
\end{theorem}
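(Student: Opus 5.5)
We will prove the bound first for the Picard iterations $u_n$ of \eqref{def-Picard}, uniformly in $n$, and then pass to the limit. Fix $n$ and differentiate \eqref{def-Picard} using the chain rule \eqref{chain}, the Heisenberg rule \eqref{Heisenberg} and \eqref{Mal-zero}. For $r<t$ this gives
\begin{align*}
D_{r,z}u_{n+1}(t,x) &= G_{t-r}(x-z)\sigma(u_n(r,z)) + \int_r^t\int_{\bR^d} G_{t-s}(x-y)\sigma'(u_n(s,y))D_{r,z}u_n(s,y)W(ds,dy)\\
&\quad + \int_r^t\int_{\bR^d}G_{t-s}(x-y)b'(u_n(s,y))D_{r,z}u_n(s,y)dyds.
\end{align*}
We then define $M_n(t-r,x-z):=\|D_{r,z}u_n(t,x)\|_p$ and aim to show $M_n(\tau,w)\le C_{T,p}G_\tau(w)$ for all $n$ by induction. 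The first term is bounded by $C K_{T,p}^{1/p}G_{t-r}(x-z)$ using \eqref{def-KTp}, where uniformity of the moments of the Picard iterates is standard.

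For the drift term, Minkowski's inequality, boundedness of $b'$, and the induction hypothesis give a bound of the form $\|b'\|_\infty C\int_r^t (G_{t-s}*G_{s-r})(x-z)ds$. By Lemma \ref{lemG*G}, $(G_{t-s}*G_{s-r})(x-z)=\frac12\int_{|t+r-2s|}^{t-r}G_\theta(x-z)d\theta$. In $d=1$ this is at most $(t-r)G_{t-r}(x-z)$ directly. In $d=2$, where $G_\theta(w)$ for $\theta<t-r$ is not dominated by $G_{t-r}(w)$, we argue differently. Either we integrate in $s$ first, writing the result as $\int_0^{t-r}\theta\,G_\theta(w)\,d\theta$-type expressions and comparing with $G_{t-r}(w)$ via the explicit formula, or we avoid pointwise comparison altogether. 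For the first route, note that $\int_0^\tau \frac{d\theta}{\sqrt{\theta^2-|w|^2}}\mathbf 1_{\theta>|w|}\lesssim \log$, which is bounded by $C\tau/\sqrt{\tau^2-|w|^2}$. This is fine since $\log(\tau/|w|)$ is $\lesssim$ the singular factor only near $|w|=\tau$; away from it, $G_\tau\ge c/\tau$ while the log blows up at $w=0$. That is a problem, so we take the second route: we iterate the induction with a time-weighted factor rather than a single step.

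Concretely, we will instead iterate the full recursion. We unroll the drift-only linear equation $v=G\sigma+G*(b'v)$ as a Neumann series $\sum_k \|b'\|_\infty^k G^{*k}$, where $*$ denotes space-time convolution. In Fourier variables, the space-time convolution of $k$ wave kernels is explicit, and the sum equals the fundamental solution of $\partial_t^2-\Delta-\lambda$. In $d=2$ this is $\frac{\cosh(\sqrt{\lambda}\sqrt{t^2-|x|^2})}{2\pi\sqrt{t^2-|x|^2}}\mathbf 1_{|x|<t}\le e^{\sqrt\lambda T}G_t(x)$, and the analogous formula with $I_0$ holds in $d=1$. This gives a clean pointwise domination by $C_TG$. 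The stochastic term is handled by BDG: $\|\cdot\|_p^2\lesssim \int_r^t\int\int G_{t-s}(x-y)G_{t-s}(x-y')M_n(s-r,y-z)M_n(s-r,y'-z)f(y-y')dydy'ds$. Since $M_n\le C G$ is not directly comparable through $f$, we follow the drift-free approach of \cite{BNZ,NZ22} and build a comparison functional.

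The main obstacle is closing the combined recursion in $d=2$ with $f$ singular. Our plan is to define, for the induction, the statement $M_n\le \Phi$, where $\Phi$ solves $\Phi=C_1G+\lambda G*\Phi+(\text{noise-induced term})$. We then show $\Phi\le C_TG$ by Fourier/Laplace methods in time, replicating the argument for $\sigma'$ in the drift-free case \cite{BNZ,NZ22}. The drift adds only the $\lambda G*\Phi$ piece, which the hyperbolic Bessel kernel above controls by $e^{\sqrt\lambda T}$. Finally, we use $D u_n\to Du$ in $L^p(\Omega;\cH)$ (closability, via $u_n\to u$ and uniform derivative bounds) together with Fatou to transfer the bound to $u$. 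The constant depends on $T,p,q,\|b'\|_\infty,\|\sigma'\|_\infty$, as claimed.
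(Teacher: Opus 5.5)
Your drift bound via the Klein--Gordon kernel is correct: $\sum_{k\ge 0}\lambda^k G^{*(k+1)}$ is the fundamental solution of $\partial_t^2-\Delta-\lambda$, which in $d=2$ equals $\frac{\cosh(\sqrt{\lambda(t^2-|x|^2)})}{2\pi\sqrt{t^2-|x|^2}}\mathbf 1_{\{|x|<t\}}\le e^{\sqrt{\lambda}T}G_t(x)$. The gap is the noise term, which you identify as the main obstacle but never treat.

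After BDG, the noise contribution to $\|D_{r,z}u_{n+1}(t,x)\|_p$ is the square root of the quadratic form $\int_r^t\int\int G_{t-s}(x-y)G_{t-s}(x-y')\|D_{r,z}u_n(s,y)\|_p\|D_{r,z}u_n(s,y')\|_pf(y-y')\,dy\,dy'\,ds$. This is not a convolution, so ``Fourier/Laplace methods in time'' do not apply to an equation $\Phi=C_1G+\lambda G*\Phi+(\text{noise-induced term})$. Asserting that such a $\Phi$ exists with $\Phi\le C_TG$ is essentially the statement to be proved. The drift-free argument of \cite{BNZ} is not a transform argument either.

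What is needed, and what the paper does in $d=2$, is the following. First, normalize by $\widetilde f_n(t)=\sup_{|x-z|<t-r}\|D_{r,z}u_n(t,x)\|_p/G_{t-r}(x-z)$. Next, bound the squared noise term using \eqref{LHS-q}, H\"older in $s$, and Lemma 4.3 of \cite{BNZ} for $\int_r^t[G^{2q}_{t-s}*G^{2q}_{s-r}(x-z)]^{\delta}ds$ with suitable $\delta>1$. This gives a constant times $G_{t-r}^2(x-z)\big(\int_r^t\widetilde f_n^{2p}(s)ds\big)^{1/p}$. Finally, close $\widetilde f_{n+1}^{2p}(t)\les 1+\int_r^t\widetilde f_n^{2p}(s)ds$ by a Gronwall-type iteration. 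The recursion must be run with this time-dependent quantity. An induction with a fixed constant, $\|D_{r,z}u_n(t,x)\|_p\le C_nG_{t-r}(x-z)$, only gives $C_{n+1}=c_1+c_2C_n$ and no uniform bound. So the drift must also be bounded in a form compatible with $\widetilde f_n$.

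You also abandoned the direct drift route because of a slip. By Fubini, integrating in $s$ produces the weight $\theta$, the length of $\{s:|t+r-2s|<\theta\}$. Then $\int_0^\tau\theta G_\theta(w)\,d\theta=\frac{1}{2\pi}\sqrt{\tau^2-|w|^2}\le\tau^2G_\tau(w)$, so no logarithm appears; you dropped the factor $\theta$. This is just the $k=1$ term of your own Neumann series.

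The paper uses exactly this route with $\widetilde f_n$ inside. Cauchy--Schwarz on the $s$-interval of length $\theta$ gives $\theta^{1/2}\big(\int_r^t\widetilde f_n^2\big)^{1/2}$, and \eqref{theta-G} then bounds the squared drift term by $(t-r)^3G^2_{t-r}(x-z)\int_r^t\widetilde f_n^2(s)ds$. Your Klein--Gordon bound could be fitted into the same scheme instead. If the first plus noise terms are bounded by $a_n(t)G_{t-r}(x-z)$ with $a_n$ nondecreasing in $t$, unrolling the drift gives $\widetilde f_{n+1}(t)\le e^{\sqrt{\lambda}T}\max_{k\le n}a_k(t)$, since the drift at time $t$ only involves times $s\le t$. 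This still requires the noise estimate above to produce $a_n$.

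Finally, the passage to the limit is not just Fatou. Closability gives only weak convergence of $Du_n$ in $L^2(\Omega;\cH)$ (Lemma 1.2.3 of \cite{nualart06}), and for general $f$ convergence in $\cH$ gives no pointwise convergence in $(r,z)$. The paper transfers the pointwise bound via Lemma A.1 of \cite{BS26}.
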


\begin{proof}
Using Heisenberg commutation rule \eqref{Heisenberg} followed by the chain rule \eqref{chain}, we infer that the sequence $(Du_n)_{n\geq 0}$ satisfies the following recurrence relation:
\begin{align}
\nonumber
D_{r,z}u_{n+1}(t,x)&=G_{t-r}(x-z)\s\big(u_n(r,z)\big)+\int_r^t \int_{\bR^d}G_{t-s}(x-y) b'\big(u_n(s,y)\big)D_{r,z}u_n(s,y) dyds+\\
\label{rec-Dun}
&\quad \int_r^t \int_{\bR^d} G_{t-s}(x-y)\s'\big(u_n(s,y)\big) D_{r,z}u_n(s,y) W(ds,dy),
\end{align}
for any $n\geq 0$. Note that $D_{r,z}u_{0}(t,x)=0$ since $u_0(t,x)=1$. 

We use Burkholder-Davis-Gundy (BDG) inequality and Minkowski inequality to estimate the $p$-th moment of the stochastic integral. Using \eqref{def-K}, it follows that for any $0<r<t\leq T$, $x,z \in \bR^d$ and $n\geq 0$,
\begin{align}
\nonumber
&\|D_{r,z}u_{n+1}(t,x)\|_p^2  \leq B_{t} \left\{ G_{t-r}^2(x-z)+  \left(\int_r^t \int_{\bR^d} G_{t-s}(x-y)\|D_{r,z}u_n(s,y)\|_pdyds\right)^2 \right.\\	
\nonumber
& \quad +\left.\int_r^t \int_{(\bR^d)^2} G_{t-s}(x-y) G_{t-s}(x-y') \|D_{r,z}u_n(s,y)\|_p \|D_{r,z}u_n(s,y')\|_p  f(y-y') dydy'ds \right\} \\\
\label{recD1}
& \quad \quad \quad \quad \quad \quad  =: B_{t} \{ G_{t-r}^2(x-z)+  I+II \},
\end{align}
where $B_{t}$ is a constant that depends on $(t,p,\|b'\|_{\infty},\|\sigma'\|_{\infty})$ and is non-decreasing in $t$.

We will prove below that for any $0<r<t\leq T$, $x,z \in \bR^d$ and $n\geq 1$,
\begin{equation}
\label{key-un}
\|D_{r,z}u_{n}(t,x)\|_p \leq C_{T,p} G_{t-r}(x-z).
\end{equation}

The conclusion will follow by Lemma A.1 of \cite{BS26}, since $\{Du_n(t,x)\}_{n\geq 1}$ converges to $Du(t,x)$ in the weak topology $L^2(\Omega;\cH)$ (by Lemma 1.2.3 of \cite{nualart06}).

\medskip

{\em Case 1.} $d=1$.
For the first integral on the RHS of \eqref{recD1}, we use H\"older's inequality with respect to the measure $\mu_t(ds,dy)=G_{t-s}(x-y)dsdy$ on $[r,t] \times \bR$ whose total mass is $(t-r)^2/2$. For the second integral of \eqref{recD1}, we use \eqref{LHS-q}. We obtain that for any $0<r<t\leq T$, $x,z \in \bR, n\geq 0$,
\begin{align*}
\|D_{r,z}u_{n+1}(t,x)\|_p^2  & \leq  B_{T} \left\{ G_{t-r}^2(x-z)+ \frac{T^2}{2}\int_r^t \int_{\bR} G_{t-s}^2(x-y) \|D_{r,z}u_n(s,y)\|_p^2 dyds \right.\\
& \quad \quad \quad \left. c_{f,1}\int_r^t \left( \int_{\bR}G_{t-s}^{2q}(x-y) \|D_{r,z}u_n(s,y)\|_p^{2q}dy\right)^{1/q}ds \right\},
\end{align*}
where $q$ is given by \eqref{def-q}. Using the fact that $G_t^{2q}(x)=c_{q} G_t^{2}(x)$ and applying H\"older's inequality with respect to the measure $G_{t-s}^2(x-y)dy$ whose total mass is $(t-s)/2$, it follows that:
\[
\left(\int_{\bR} G_{t-s}^{2q}(x-y) \|D_{r,z}u_n(s,y)\|_p^{2q} dy\right)^{1/q} \leq c_{q}^{q} \frac{T}{2}\int_{\bR} G_{t-s}^2(x-y)\|D_{r,z}u_n(s,y)\|_p^{2} dy.
\]
Hence
\begin{align*}
&\|D_{r,z}u_{n+1}(t,x)\|_p^2  \leq  A_T \left\{ G_{t-r}^2(x-z)+ \int_r^t \int_{\bR} G_{t-s}^2(x-y) \|D_{r,z}u_n(s,y)\|_p^2 dyds \right\},
\end{align*}
where $A_T=B_{T} \big(1+\frac{T^2}{2}+c_{q}^{q} \frac{T}{2}\big)$.
By induction on $n\geq 1$, it follows that 
\[
\|D_{r,z}u_{n}(t,x)\|_p^2 \leq A_T \left\{ G_{t-r}^2(x-z)+ \sum_{k=1}^{n-1} A_T^k I_k(t,x,r,z)\right\},
\]
for any $0<r<t\leq T$ and $x,z \in \bR$, where
\begin{align}
\label{def-Ik}
I_k(t,x,r,z)&=\int_{r<t_1<\ldots<t_k<t} \int_{\bR^k}\prod_{i=1}^{k}G_{t_{i+1}-t_i}^2(x_{i+1}-x_i) G_{t_1-r}^2(x_1-z)d\pmb{x}_k d \pmb{t}_k,
\end{align}
with $\pmb{x}_k=(x_1,\ldots,x_k)$, $\pmb{t}_k=(t_1,\ldots,t_k)$, $t_{k+1}=t$ and $x_{k+1}=x$. Using the fact that $G_t^2(x)=\frac{1}{2}G_t(x)$ and inequality
\[
\int_{\bR}G_{t-s}(x-y)G_{s-r}(y-z)dy \leq (t-r)G_{t-r}(x-z),
\]
it follows that 
\[
I_k(t,x,r,z)\leq \frac{(t-r)^{2k}}{2^k k!}G_{t-r}(x-z).
\]
Hence, for any $0<r<t\leq T$, $x,z \in \bR$ and $n\geq 1$,
\[
\|D_{r,z}u_{n}(t,x)\|_p^2 \leq 2A_T \left(\sum_{k\geq 0} A_T^k \frac{T^{2k}}{2^k k!} \right) G_{t-r}^2(x-z)=:C_{T,p}^2 G_{t-r}^2(x-z).
\]

\medskip

{\em Case 2.} $d=2$. We proceed similarly to Lemma A.2 of \cite{BS26}. We fix $r \in [0,T]$ and $z \in \bR^2$.
By induction on $n \geq 1$,  $D_{r,z}u_n(t,x)=0$ if $|x-z|\geq t-r$. We denote 
\[
f_n(t,x)=\|D_{r,z}u_n(t,x)\|_p \quad \mbox{and} \quad \widetilde{f}_n(t)=\sup_{|x-z|<t-r}\frac{f_n(t,x)}{G_{t-r}(x-z)}.
\]

We suppose first that $\widetilde{f}_n(t)<\infty$ for all $t \in [r,T]$, and we derive a recurrence relation for $\widetilde{f}_{n+1}$. 

We bound the terms on the right-hand side of \eqref{recD1}. We start with $I$, the term due to the drift. We have:
\begin{align*}
\int_r^t \int_{\bR^2} G_{t-s}(x-y) f_n(s,y)dyds & \leq \int_r^t \widetilde{f}_n(s) \int_{\bR^2} G_{t-s}(x-y) G_{s-r}(y-z) dy ds \\
& = \frac{1}{2}\int_r^t  \widetilde{f}_n(s) \int_{|t-2s+r|}^{t-r} G_{\theta}(x-z)d\theta ds,
\end{align*}
where for the second line we use the  identity \eqref{G*G}.

Considering separately the cases $\theta \in [r,\frac{t+r}{2}]$ and $\theta \in [\frac{t+r}{2},t-r]$, and using Fubini's theorem, it follows that
\[
\int_r^t \int_{\bR^2} G_{t-s}(x-y) f_n(s,y)dyds \leq \frac{1}{2} \int_0^{t-r} \left(\int_{\frac{t+r-\theta}{2}}^{\frac{t+r+\theta}{2}} \widetilde{f}_n(s) ds \right) G_{\theta}(x-z)d\theta.
\]
Using Caucy-Schwarz inequality, we obtain that for any $\theta \in [0,t-r]$,
\begin{align*}
\int_{\frac{t+r-\theta}{2}}^{\frac{t+r+\theta}{2}} \widetilde{f}_n(s) ds  & \leq \theta^{1/2}
\left(\int_{\frac{t+r-\theta}{2}}^{\frac{t+r+\theta}{2}} \widetilde{f}_n^2(s) ds\right)^{1/2} \leq
\theta^{1/2}
\left(\int_{r}^{t} \widetilde{f}_n^2(s) ds\right)^{1/2}.
\end{align*}
Hence,
\[
\int_r^t \int_{\bR^2} G_{t-s}(x-y) f_n(s,y)dyds \leq \frac{1}{2} \left(\int_{r}^{t} \widetilde{f}_n^2(s) ds\right)^{1/2}\int_0^{t-r} \theta^{1/2} G_{\theta}(x-z)d\theta.
\]
By direct calculation,
\begin{equation}
\label{theta-G}
\int_0^t \theta^{1/2} G_{\theta}(x) d\theta \leq 2t^{3/2} G_{t}(x) \quad \mbox{for any $t>0$}.
\end{equation}
Indeed,
\begin{align*}
\int_0^t \theta^{1/2} G_{\theta}(x) d\theta &=\frac{1}{2\pi}\int_{|x|}^t \sqrt{\frac{\theta}{\theta+|x|}} \cdot \frac{1}{\sqrt{\theta-|x|}} d\theta \leq \frac{1}{2\pi} \int_{0}^{t-|x|} \frac{1}{\sqrt{\eta}} d\eta=\frac{1}{\pi}\sqrt{t-|x|} \\
& \leq 2 \sqrt{(t-|x|)(t^2-|x|^2)} G_t(x) \leq 2t^{3/2} G_{t}(x).
\end{align*}
It follows that
\begin{equation}
\label{bound-I}
I \leq   (t-r)^{3} G_{t-r}^2(x-z) \int_{r}^{t} \widetilde{f}_n^2(s) ds.
\end{equation}

We now bound the term $II$, due to the noise. Using the definition of $\widetilde{f}_n(t)$ and inequality \eqref{LHS-q} with $q$ given by \eqref{def-q}, we have:
\begin{align*}
II & \leq \int_r^t \widetilde{f}_n^2(s) \int_{\bR^2} \int_{\bR^2} G_{t-s}(x-y) G_{t-s}(x-y') G_{s-r}(y-z) G_{s-r}(y'-z) f(y-y') dydy'ds \\
& \leq c_{f,2} \int_r^t \widetilde{f}_n^2(s) \big[ G_{t-s}^{2q}* G_{s-r}^{2q}(x-z)\big]^{1/q}ds.  
\end{align*}
We apply H\"older's inequality, with $\frac{1}{p}+\frac{1}{p'}=1$:
\begin{align*}
II \le c_{f,2} \left(\int_r^t \widetilde{f}_n^{2p}(s)ds \right)^{1/p} \left( \int_r^t \big[ G_{t-s}^{2q}* G_{s-r}^{2q}(x-z)\big]^{p'/q}ds \right)^{1/p'}.
\end{align*}
We now apply Lemma 4.3 of \cite{BNZ} with $\delta=p'/q$, which means that we must choose $1<p'<\frac{q}{2q-1}$. Note that this lemma remains valid for any $\delta(1-2q)+1>0$ and $q \in (\frac{1}{2},1)$. We obtain that
\begin{equation}
\label{bound-II}
II \leq C_{f,q}^2  (t-r)^{1-\delta(1-2q)} G_{t-r}^{\delta(2q-1)}(x-z) \left(\int_r^t \widetilde{f}_n^{2p}(s)ds \right)^{1/p},
\end{equation}
for some constant $C_{f,q}>1$.

We put together the bounds from \eqref{bound-I} and \eqref{bound-II}. First, by H\"older's inequality, 
\[
\int_r^t \widetilde{f}_n^2(s) ds \leq (t-r)^{1/p'} \left(\int_r^t \widetilde{f}_n^{2p}(s)ds \right)^{1/p}.
\]
Secondly, we use the fact that $G_{t-r}^{\delta(2q-1)}(x-z) \leq (t-r)^{2-\delta(2q-1)} G_{t-r}^{2}(x-z)$, due to the trivial bound:
\begin{equation}
\label{G-ab}
G_t^a(x) \leq t^{b-a} G_t^b(x) \quad \mbox{for any $t>0$ and $a<b$}.
\end{equation}
Hence,
\[
f_{n+1}^2(t,x) \le C_{f,q}^2 G_{t-r}^2(x-z) \left\{1+ [(t-r)^{3+\frac{1}{p'}}+(t-r)^{3-2\delta(2q-1)}] \left( \int_r^t \widetilde f_n^{2p}(s)ds\right)^{1/p}\right\}
\]
We divide by $G_{t-r}^2(x-z)$ and we take the supremum over $x$ with $|x-z|<t-r$. Then, we take power $p$. We obtain that:
\[
\widetilde{f}_{n+1}^{2p}(t) \le C_{f,q}^{2p} \left\{1+ [(t-r)^{3+\frac{1}{p'}}+(t-r)^{3-2\delta(2q-1)}]  \int_r^t \widetilde f_n^{2p}(s)ds\right\}.
\]

This relation is similar to (A.10) of \cite{BS26}. Proceeding as in Step 2 and 3 of the proof of Lemma A.1 of \cite{BS26}, we conclude that $\widetilde{f}_{n}(t) \leq C_{f,q} C_t$ where $C_t$ is a constant that is non-decreasing in $t$.
\end{proof}

\begin{corollary}
\label{D2u-support}
Let $0\leq t \leq T$, $r,\theta \in [0,t]$, $x,z,w \in \bR^d$, and $p \geq 2$. If $|x-z| > t-r$ or $|x-w| > t - \theta$. Then, 
\[
\|D_{(r,z),(\theta,w)}^2u(t,x)\|_p = 0. 
\]
\end{corollary}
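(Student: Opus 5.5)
The plan is to show that the second Malliavin derivative vanishes outside the product of the backward light cones, using the Picard scheme exactly as in the proof of Theorem \ref{key-Du}. Differentiating the recurrence \eqref{rec-Dun} once more, with the Heisenberg rule \eqref{Heisenberg} and the chain rule \eqref{chain}, gives for $r\le\theta$ (the case $\theta<r$ is symmetric):
\begin{align*}
D^2_{(r,z),(\theta,w)}u_{n+1}(t,x)&=G_{t-r}(x-z)\s'\big(u_n(r,z)\big)D_{\theta,w}u_n(r,z)+G_{t-\theta}(x-w)\s'\big(u_n(\theta,w)\big)D_{r,z}u_n(\theta,w)\\
&\quad+\int_{\theta}^t\int_{\bR^d}G_{t-s}(x-y)\Big[b''(u_n)D_{r,z}u_nD_{\theta,w}u_n+b'(u_n)D^2_{(r,z),(\theta,w)}u_n\Big](s,y)\,dyds\\
&\quad+\int_{\theta}^t\int_{\bR^d}G_{t-s}(x-y)\Big[\s''(u_n)D_{r,z}u_nD_{\theta,w}u_n+\s'(u_n)D^2_{(r,z),(\theta,w)}u_n\Big](s,y)\,W(ds,dy).
\end{align*}
Here the lower limit $\theta$ comes from \eqref{Mal-zero}, and the first term in fact vanishes because $D_{\theta,w}u_n(r,z)=0$ when $\theta>r$.

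The induction hypothesis for $u_n$ is that $D_{r,z}u_n(s,y)=0$ a.s.\ when $|y-z|\ge s-r$, which was already noted in Case 2 of Theorem \ref{key-Du} and also follows from that theorem since $G_{s-r}(y-z)=0$ there. It also includes that $D^2_{(r,z),(\theta,w)}u_n(s,y)=0$ a.s.\ when $|y-z|>s-r$ or $|y-w|>s-\theta$. The base case is trivial because $u_0=1$.

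For the inductive step, assume $|x-z|>t-r$. Then $G_{t-r}(x-z)=0$, and the second free term contains $D_{r,z}u_n(\theta,w)$, which is nonzero only if $|w-z|<\theta-r$. Nonzero contributions would then need $|x-w|<t-\theta$, but the triangle inequality gives $|x-z|<t-r$, a contradiction. In the integrals, $G_{t-s}(x-y)\neq0$ forces $|x-y|<t-s$, so $|y-z|>|x-z|-(t-s)>s-r$. By the induction hypothesis, every integrand factor $D_{r,z}u_n(s,y)$ and $D^2u_n(s,y)$ then vanishes. The Lebesgue integrals are therefore zero, and each stochastic integral is the integral of a predictable integrand vanishing on its support, hence zero a.s. (its second moment is zero by the isometry). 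The case $|x-w|>t-\theta$ is identical with the roles of $(r,z)$ and $(\theta,w)$ swapped.

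To pass to the limit, note that $D^2u_n(t,x)\to D^2u(t,x)$ weakly in $L^2(\Omega;\cH^{\otimes2})$ by the $\bD^{2,2}$ analogue of Lemma 1.2.3 of \cite{nualart06}. This requires a uniform bound $\sup_n\bE\|D^2u_n(t,x)\|^2_{\cH^{\otimes2}}<\infty$, which follows from standard estimates. Since each $D^2u_n(t,x)$ vanishes on the set $\{|x-z|>t-r\}\cup\{|x-w|>t-\theta\}$, so does the weak limit. Hence $\|D^2_{(r,z),(\theta,w)}u(t,x)\|_p=0$ for a.e.\ such $(r,z,\theta,w)$, and for all of them with the appropriate modification, as in the use of Lemma A.1 of \cite{BS26}.

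I expect the main obstacle to be this limit step, namely justifying the uniform second-order bound and the weak convergence. The support propagation itself is elementary.
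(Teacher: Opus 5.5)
Your argument is correct, but it takes a much longer route than the paper. The paper records this statement as a corollary of Theorem \ref{key-Du} and gives no separate proof. The intended deduction is short.
\begin{itemize}
\item The bound $\|D_{r,z}u(t,x)\|_p\le C_{T,p}G_{t-r}(x-z)$ says that $D_{r,z}u(t,x)=0$ a.s.\ whenever $|x-z|\ge t-r$.
\item Differentiating this vanishing first derivative in the direction $(\theta,w)$ gives $D^2_{(r,z),(\theta,w)}u(t,x)=0$.
\item The symmetry $D^2_{(r,z),(\theta,w)}u(t,x)=D^2_{(\theta,w),(r,z)}u(t,x)$ handles the case $|x-w|>t-\theta$ in the same way.
\end{itemize}

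You instead re-run the light-cone propagation on the second-order Picard recurrence and then pass to the limit. That forces you to invoke $\sup_n\bE\|D^2u_n(t,x)\|^2_{\cH^{\otimes2}}<\infty$ and weak convergence in $L^2(\Omega;\cH^{\otimes2})$, which is the step you flag as the main obstacle. That step is avoidable. The first-order support property you feed into your induction hypothesis already holds for the limit $u$ itself, by Theorem \ref{key-Du}. One more derivative plus symmetry then finishes, with nothing left to propagate at second order.

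Your induction itself is sound. The triangle-inequality checks on the two free terms and on the integrands are right. The uniform $\bD^{2,2}$ bounds you cite as standard really are standard, and the paper implicitly relies on them anyway, for instance when applying Proposition \ref{prop18}.

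What your route buys is that it works directly with the recurrence. That is the scaffolding you would need for quantitative bounds on $D^2u_n$, such as the $d=1$ pointwise bound mentioned in the remark after Theorem \ref{key-D2u-int}. For the qualitative support statement, though, the paper's deduction from Theorem \ref{key-Du} is all that is required.
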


Recall definition \eqref{def-q} of $q$.

\begin{theorem}
\label{key-D2u-int}
For any $0\leq r,\theta \leq t' \leq T$, $w \in \bR^d$,  and $p\geq 2$,
\[
\int_{\bR^d} \left[ \sup_{r \vee \theta \leq t \leq t'} \int_{\bR^d} \|D_{(r,z),(\theta,w)}^2u(t,x)\|^{2p'}_p dx \right] dz \leq C_{T,p,p'},
\]
where $p'=1$ if $d=1$ and $f \in L^1(\bR)$ or the noise is white, and  $p' \in (q,1)$ is arbitrary if $d=2$ and $f \in L^1(\bR^2) \cap L^{\ell}(\bR^2)$ for some $\ell>1$, or $f(x)=|x|^{-\beta}$ for some $\beta \in (0,d)$.

\end{theorem}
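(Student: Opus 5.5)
Following the proof of Theorem~\ref{key-Du}, we work with the Picard iterations $u_n$ and prove a bound uniform in $n$. Then we pass to the limit using weak convergence of $D^2u_n$ to $D^2u$ together with Fatou's lemma.

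\textbf{Step 1: recurrence for $D^2u_n$.} Differentiating \eqref{rec-Dun} once more with the Heisenberg rule \eqref{Heisenberg} and the chain rule \eqref{chain} gives, for $r\le\theta$ say,
\begin{align*}
D^2_{(r,z),(\theta,w)}u_{n+1}(t,x)&=G_{t-\theta}(x-w)\sigma'(u_n(\theta,w))D_{r,z}u_n(\theta,w)\\
&\quad+\int_{\theta}^t\!\int_{\bR^d}G_{t-s}(x-y)\big[b'(u_n)D^2u_n+b''(u_n)D_{r,z}u_nD_{\theta,w}u_n\big](s,y)\,dyds\\
&\quad+\int_{\theta}^t\!\int_{\bR^d}G_{t-s}(x-y)\big[\sigma'(u_n)D^2u_n+\sigma''(u_n)D_{r,z}u_nD_{\theta,w}u_n\big](s,y)\,W(ds,dy).
\end{align*}
Take $L^p(\Omega)$ norms, using BDG and Minkowski as in \eqref{recD1}. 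Use Hölder with $\|D_{r,z}u_n(s,y)D_{\theta,w}u_n(s,y)\|_p\le C\,G_{s-r}(y-z)G_{s-\theta}(y-w)$, which follows from \eqref{key-un}.

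\textbf{Step 2: the inhomogeneous terms.} Set
\[
g_n(t,z)=\int_{\bR^d}\|D^2_{(r,z),(\theta,w)}u_n(t,x)\|_p^{2p'}dx .
\]
The first term contributes
\[
\int\!\!\int G_{t-\theta}^{2p'}(x-w)\,G_{\theta-r}^{2p'}(w-z)\,dx\,dz \le C\,(t-\theta)^{\cdot}(\theta-r)^{\cdot},
\]
which is finite by \eqref{int G p}; this uses $2p'<2$ when $d=2$, which is why $p'<1$ is required there. The product terms $G_{s-r}(y-z)G_{s-\theta}(y-w)$ are handled after integrating in $x$ and $z$ as follows. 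Young's inequality in $x$ gives $\|G_{t-s}*h\|_{L^{2p'}}\le (t-s)\|h\|_{L^{2p'}}$. For the noise term, \eqref{LHS-q} with exponent $2p'$ is replaced by Lemma~\ref{fK int}: after raising to the power $p'$ one needs a Minkowski step to move $\int dx$ inside. Then $\int_z\int_y G^{2p'}_{s-r}(y-z)G^{2p'}_{s-\theta}(y-w)$ factorizes by Fubini and is bounded. The compact support (Corollary~\ref{D2u-support}) confines everything to balls of radius $\le 2T$, so Hölder's inequality changes exponents only at the cost of $T$-dependent constants.

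\textbf{Step 3: closing the recursion.} For the linear terms in $D^2u_n$, apply Minkowski's integral inequality to the drift term and obtain
\[
\int\|\cdots\|^{2p'}dx\le C\int_\theta^t g_n(s,z)\,ds .
\]
For the stochastic term, the expression
\[
\int_\theta^t\!\!\int\!\!\int G_{t-s}(x-y)G_{t-s}(x-y')\,F(y)F(y')\,f(y-y')\,dydy'ds
\]
with $F=\|D^2u_n(s,\cdot)\|_p$ is bounded via \eqref{LHS-q} by $\int_\theta^t\|G_{t-s}(x-\cdot)F\|_{L^{2q}}^2\,ds$. We then need to raise this to the power $p'$ and integrate in $x$; Hölder with $2p'>2q$, using the support of $G$, converts it into $\int_\theta^t g_n(s,z)\,ds$ times constants. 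This is the step I expect to be the main obstacle, because in $d=2$ the kernel $G^{2q}$ must be integrated against the $L^{2p'}$ norm without losing integrability; I would first try $\sup_x$-free Young inequalities exploiting $\|G_{t-s}\|_{L^{a}}<\infty$ for $a<2$. One obtains
\[
\sup_{\theta\le t\le t'}g_{n+1}(t,z)\le C\,h(z)+C\int_\theta^{t'}\sup_{\theta\le s'\le s}g_n(s',z)\,ds ,
\]
where $h(z)$ is the integrable contribution from Step 2. Iterating this Gronwall-type inequality gives
\[
\sup_n\sup_t g_n(t,z)\le C\,\tilde h(z)
\]
with $\tilde h$ integrable. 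Integrating in $z$ then yields the claim.
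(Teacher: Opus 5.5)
Your overall architecture (iterate the recurrence for $D^2u_n$, estimate it with BDG and Minkowski, use \eqref{key-un} for the source terms and Corollary~\ref{D2u-support} for supports, then Gronwall) is essentially the paper's. The paper works directly with $u$ rather than with Picard iterates. The source terms and the drift term in $D^2u_n$ can be handled roughly as you describe.

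\textbf{The gap.} It sits at the step you flagged, and it affects every case with $p'<1$: $d=2$, and also the Riesz case in $d=1$. With $F_s=\|D^2u_n(s,\cdot)\|_p$, the noise term
\[
\int_x\Big(\int_\theta^t\!\int\!\!\int G_{t-s}(x-y)G_{t-s}(x-y')F_s(y)F_s(y')f(y-y')\,dydy'ds\Big)^{p'}dx
\]
cannot be bounded by $C\int_\theta^t g_n(s,z)\,ds$.

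\textbf{Your pointwise step fails in $d=2$.} Controlling $\int G_{t-s}^{2q}(x-y)F_s^{2q}(y)\,dy$ by $\|F_s\|_{L^{2p'}}^{2q}$ through H\"older in $y$ requires $G^{2qp'/(p'-q)}\in L^1(\bR^2)$. That means $p'>q/(1-q)>1$, which is impossible.

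\textbf{What works, and what it yields.} You must integrate in $x$ first. Pull the power $p'$ outside the $x$-integral by H\"older on $\{|x-z|<T\}$ (Lemma~\ref{Hol-lem}). Minkowski's inequality goes the wrong way for exponents below $1$, which also affects your Step 2. Then apply Young's inequality, or, as the paper does, use $\int G_{t-s}(x-y)G_{t-s}(x-y')\,dx=K_{t-s}(y-y')\le K_{t'}(y-y')$ together with Lemma~\ref{fK int}. This is where $p'>q$ is needed. The output, however, is
\[
C\Big(\int_\theta^t g_n(s,z)^{1/p'}\,ds\Big)^{p'},
\]
and this cannot be replaced by $C\int_\theta^t g_n(s,z)\,ds$. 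For $F_s=\lambda\mathbf 1_{[s_0,s_0+\epsilon]}(s)\phi$, the noise term is of order $\lambda^{2p'}\epsilon^{p'}$, while $\int_\theta^t g(s)\,ds$ is of order $\lambda^{2p'}\epsilon$. So the linear recursion in your Step 3, and the Gronwall iteration built on it, are not justified.

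\textbf{The paper's fix.} The missing ingredient is a way to close a recursion with this concave-in-time structure. The paper replaces $g$ by its running supremum $M(s)=\sup_{\rho\le s'\le s}\int\|D^2u(s',y)\|_p^{2p'}dy$, which is non-decreasing. It then applies Lemma~\ref{incr ineq}:
\[
\Big(\int_\rho^{t'}M^{1/p'}ds\Big)^{p'}\le p'\int_\rho^{t'}(t'-s)^{p'-1}M(s)\,ds.
\]
After integrating in $z$, this gives a Volterra inequality with weakly singular kernel $(t'-s)^{p'-1}$; the drift term is matched to this kernel by the trivial bound $1\le T^{1-p'}(t'-t)^{p'-1}$. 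Iterating once more gives the kernel $(t'-s)^{2p'-1}$, which is bounded because $p'>\frac12$, and the classical Gronwall lemma finishes.

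\textbf{A shorter fix in your setting.} Since you work pointwise in $z$, the corrected inequality reads
\[
g_{n+1}(t,z)\le h(z)+C\int_\theta^t g_n(s,z)\,ds+C\Big(\int_\theta^t g_n(s,z)^{1/p'}ds\Big)^{p'},
\]
where $h(z)$ is the supremum in $t$ of the source terms and is integrable in $z$. Raise both sides to the power $1/p'$ and use $(\int_\theta^t g_n\,ds)^{1/p'}\le T^{1/p'-1}\int_\theta^t g_n^{1/p'}ds$. This gives a linear Gronwall inequality for $g_n^{1/p'}$, hence $\sup_n\sup_t g_n(t,z)\le C\,h(z)$, which is exactly the bound you were aiming for.

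Without one of these two steps, your argument covers only the case $p'=1$.
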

\begin{proof}
We first note that $D_{r,z}u(t,x) = 0$ whenever $t < r$ by \eqref{Mal-zero}. Furthermore, we use the convention $G_t(x) = 0$ whenever $t < 0$. Let $0\leq r,\theta \leq t \leq t' \leq T$. We denote $\rho = r \vee \theta$.

Taking the first and then second Malliavin derivative of the mild solution \eqref{mild soln} using \eqref{chain} and \eqref{Heisenberg}, 
\begin{align*}
D^2_{(r,z),(\theta,w)}u(t,x) & =  G_{t-r}(x-z) \s'\big(u(r,z)\big) D_{\theta,w} u(r,z) \\
& + G_{t-\theta}(x-w) \s'\big(u(\theta,w)\big) D_{r,z} u(\theta,w) \\
& + \int_\rho^t \int_{\bR^d} G_{t-s}(x-y) b''\big(u(s,y)\big) D_{r,z}u(s,y) D_{\theta,w}u(s,y)  dyds \\
& + \int_\rho^t \int_{\bR^d} G_{t-s}(x-y) b'\big(u(s,y)\big) D_{(r,z),(\theta,w)}^2 u(s,y) dyds \\
& + \int_\rho^t \int_{\bR^d} G_{t-s}(x-y) \s''\big(u(s,y)\big) D_{r,z}u(s,y) D_{\theta,w}u(s,y)  W(ds,dy)\\
& + \int_\rho^t \int_{\bR^d} G_{t-s}(x-y) \s'\big(u(s,y)\big) D_{(r,z),(\theta,w)}^2 u(s,y) W(ds,dy).
\end{align*}
Using BDG inequality and Minkowski's inequality,
\begin{align*}
\|D^2_{(r,z),(\theta,w)}u(t,x)\|_p^{2p'} & \les  G^{2p'}_{t-r}(x-z) \|\s'\big(u(r,z)\big) D_{\theta,w} u(r,z)\|_p^{2p'} \\
& + G^{2p'}_{t-\theta}(x-w) \|\s'\big(u(\theta,w)\big) D_{r,z} u(\theta,w)\|_p^{2p'} \\
& + \left(\int_\rho^t \int_{\bR^d} G_{t-s}(x-y) \|b''\big(u(s,y)\big) D_{r,z}u(s,y) D_{\theta,w}u(s,y)\|_p  dyds\right)^{2p'} \\
& + \left(\int_\rho^t \int_{\bR^d} G_{t-s}(x-y) \|b'\big(u(s,y)\big) D_{(r,z),(\theta,w)}^2 u(s,y)\|_p dyds\right)^{2p'} \\
& + \bigg(\int_\rho^t \int_{\bR^d} \int_{\bR^d} G_{t-s}(x-y) \|\s''\big(u(s,y)\big) D_{r,z}u(s,y) D_{\theta,w}u(s,y)\|_p f(y-y')  \\
& \qquad G_{t-s}(x-y') \|\s''\big(u(s,y')\big) D_{r,z}u(s,y') D_{\theta,w}u(s,y')\|_p dy dy' ds \bigg)^{p'}\\
& + \bigg( \int_\rho^t \int_{\bR^d} \int_{\bR^d} G_{t-s}(x-y) \|\s'\big(u(s,y)\big) D_{(r,z),(\theta,w)}^2 u(s,y)\|_pf(y-y')  \\
& \qquad G_{t-s}(x-y') \|\s'\big(u(s,y')\big) D_{(r,z),(\theta,w)}^2 u(s,y')\|_p dy dy' ds \bigg)^{p'}.
\end{align*}
Using Theorem \ref{key-Du}, H\"older's inequality, and the boundedness of $b'$,$b''$,$\s'$ and $\s''$, 
\[
 \int_{\bR^d} \sup_{\rho \leq t \leq t'} \int_{\bR^d} \|D^2_{(r,z),(\theta,w)}u(t,x)\|_{p}^{2p'} dx dz \les \sum_{i=1}^5 \cB_i
\]
where
\begin{align*}
 \cB_1 & = \int_{\bR^d} \sup_{\rho \leq t \leq t'} \int_{\bR^d} \Big(G^{2p'}_{t-r}(x-z)G^{2p'}_{r-\theta}(z-w) + G^{2p'}_{t-\theta}(x-w)G^{2p'}_{\theta-r}(w-z) \Big) dx dz, \\
 \cB_2 & = \int_{\bR^d} \sup_{\rho \leq t \leq t'} \int_{\bR^d} \left( \int_\rho^t \int_{\bR^d} G_{t-s}(x-y)G_{s-r}(y-z)G_{s-\theta}(y-w) dy ds\right)^{2p'} dx dz, \\
 \cB_3 & = \int_{\bR^d} \sup_{\rho \leq t \leq t'}  \int_{\bR^d} \left( \int_\rho^t \int_{\bR^d} G_{t-s}(x-y) \|D^2_{(r,z),(\theta,w)}u(s,y)\|_{p}  dy ds\right)^{2p'} dx dz, \\
 \cB_4 & = \int_{\bR^d} \sup_{\rho \leq t \leq t'}  \int_{\bR^d} \bigg( \int_\rho^t \int_{\bR^d} \int_{\bR^d} G_{t-s}(x-y)G_{s-r}(y-z)G_{s-\theta}(y-w) f(y-y') \\
 & \quad G_{t-s}(x-y')G_{s-r}(y'-z)G_{s-\theta}(y'-w) dydy' ds\bigg)^{p'} dx dz, \\
 \cB_5 & = \int_{\bR^d} \sup_{\rho \leq t \leq t'}  \int_{\bR^d} \bigg( \int_\rho^t \int_{\bR^d} \int_{\bR^d} G_{t-s}(x-y) \|D^2_{(r,z),(\theta,w)}u(s,y)\|_{p} f(y-y') \\
 & \quad G_{t-s}(x-y') \|D^2_{(r,z),(\theta,w)}u(s,y')\|_{p} dy dy' ds\bigg)^{p'} dx dz. 
\end{align*}

We compute each term individually. For $\cB_1$ we use \eqref{int G p}, noting that $2p'<2$ if $d=2$:
\[
\cB_1 \les \int_{\bR^d} \Big( G^{2p'}_{r-\theta}(z-w) + G^{2p'}_{\theta-r}(w-z)\Big) dz  \les C_T.
\]

For $\cB_2$ we use H\"older's inequality with respect to the finite measure $\mathbf 1_{\{\rho < s < t\}} \mathbf 1_{\{|x-y| \leq T\}}dyds$ and exponent $2p' > 1$:
\[
 \cB_2 \les \int_{\bR^d} \sup_{\rho \leq t \leq t'}  \int_\rho^t \int_{\bR^d} \int_{\bR^d}  G^{2p'}_{t-s}(x-y)G^{2p'}_{s-r}(y-z)G^{2p'}_{s-\theta}(y-w) dx dy ds dz. \\
\]
Applying \eqref{int G p} with respect to $dx$, 
\[
 \cB_2 \les \int_{\bR^d} \sup_{\rho \leq t \leq t'}  \int_\rho^t \int_{\bR^d}G^{2p'}_{s-r}(y-z)G^{2p'}_{s-\theta}(y-w) dy ds dz \leq \int_\rho^{t'} \int_{\bR^d} \int_{\bR^d} G^{2p'}_{s-r}(y-z)G^{2p'}_{s-\theta}(y-w) dz dy ds.
\]
 Applying \eqref{int G p} with respect to $dz$ and $dy$ then integrating in $ds$, 
\[
 \cB_2 \les \int_\rho^T \int_{\bR^d} \int_{\bR^d} G^{2p'}_{s-\theta}(y-w) dy ds \les \int_\rho^T ds \les C_T.
\]

Similarly for $\cB_3$, we use H\"older's inequality with respect to the measure $\mathbf 1_{\{\rho \leq s \leq t\}} \mathbf 1_{\{|x-y| \leq T\}}dyds$:
\[
 \cB_3 \les \int_{\bR^d} \sup_{\rho \leq t \leq t'} \int_\rho^t \int_{\bR^d} \int_{\bR^d} G^{2p'}_{t-s}(x-y) \|D^2_{(r,z),(\theta,w)}u(s,y)\|^{2p'}_{p}  dx dy ds dz. \\
\]
Applying \eqref{int G p} with respect to $dx$,
\[
 \cB_3 \les \int_{\bR^d} \sup_{\rho \leq t\leq t'} \int_\rho^t \int_{\bR^d} \|D^2_{(r,z),(\theta,w)}u(s,y)\|^{2p'}_{p} dy ds dz \les \int_{\bR^d} \int_\rho^{t'} \int_{\bR^d} \|D^2_{(r,z),(\theta,w)}u(s,y)\|^{2p'}_{p} dy ds dz. \\
\]
Relabelling, 
\[
 \cB_3 \les \int_\rho^{t'} \int_{\bR^d} \int_{\bR^d} \|D^2_{(r,z),(\theta,w)}u(t,x)\|^{2p'}_{p} dx dz dt. \\
\]
Replacing the $dx$ integral (which depends on $t$) by its supremum over $s \in [\rho, t]$,
\[
 \cB_3 \les \int_\rho^{t'} \int_{\bR^d} \sup_{\rho \leq s \leq t} \int_{\bR^d} \|D^2_{(r,z),(\theta,w)}u(s,x)\|^{2p'}_{p} dx dz dt. 
\]
For $0 \leq t \leq t' \leq T$ we have $1 \leq T^{1-p'}(t'-t)^{p'-1}$. We will applying this trivial bound so that $\cB_3$ matches the estimate for $\cB_5$ below, 
\[
 \cB_3 \les \int_\rho^{t'} (t'-t)^{p'-1} \int_{\bR^d} \sup_{\rho<s<t} \int_{\bR^2} \|D^2_{(r,z),(\theta,w)}u(s,x)\|^{2p'}_{p} dx dz dt.
\]

For $\cB_4$ we use Lemma \ref{Hol-lem} with respect to the measure $\mathbf 1_{\{|x-z| < T\}}dx$, if $p'< 1$. (If $p'  = 1$, the relation below is trivial. We obtain: 
\begin{align*}
 \cB_4 & \les \int_{\bR^d} \sup_{\rho \leq t \leq t'}  \bigg(\int_\rho^t \int_{\bR^d} \int_{\bR^d} \int_{\bR^d} G_{t-s}(x-y)G_{s-r}(y-z)G_{s-\theta}(y-w) f(y-y'), \\
 & \quad G_{t-s}(x-y') G_{s-r}(y'-z)G_{s-\theta}(y'-w) dx dy dy' ds \bigg)^{p'} dz.
 \end{align*}
Identifying the convolution $K_{t-s}(y-y')$, taking the supremum over $t \in [\rho,t']$, and using the fact that $t \to K_t(x)$ is non-decreasing, we have:
\begin{align*}
 \cB_4 & \les \int_{\bR^d} \sup_{\rho \leq t \leq t'}  \bigg(\int_\rho^t \int_{\bR^d} \int_{\bR^d} G_{s-r}(y-z)G_{s-\theta}(y-w) f(y-y') K_{t-s}(y-y') \\
 & \quad G_{s-r}(y'-z)G_{s-\theta}(y'-w)  dy dy' ds \bigg)^{p'} dz \\
 & \leq \int_{\bR^d} \bigg(\int_\rho^{t'} \int_{\bR^d} \int_{\bR^d} G_{s-r}(y-z)G_{s-\theta}(y-w) f(y-y') K_{t'}(y-y') \\
 & \quad G_{s-r}(y'-z)G_{s-\theta}(y'-w)  dy dy' ds \bigg)^{p'} dz.
\end{align*}
We apply again Lemma \ref{Hol-lem} with respect to the measure $\mathbf 1_{\{|z-w| < 2T\}}dz$, if $p' < 1$. (If $p'=1$, the relation below is trivial.) We obtain:
\begin{align*}
 \cB_4 & \les \bigg(\int_\rho^{t'} \int_{\bR^d} \int_{\bR^d} \int_{\bR^d} G_{s-r}(y-z)G_{s-\theta}(y-w) f(y-y')  K_{t'}(y-y') \\
 & \quad G_{s-r}(y'-z)G_{s-\theta}(y'-w) dz dy dy' ds \bigg)^{p'}.
\end{align*}
Identifying the convolution $K_{s-r}(y-y')$, and bounding it by $K_{t'}(y-y')$, we obtain:
\begin{align*}
 \cB_4 & \les \bigg(\int_\rho^{t'} \int_{\bR^d} \int_{\bR^d} G_{s-\theta}(y-w) G_{s-\theta}(y'-w) f(y-y') K_{t'}^2(y-y')  dy dy' ds \bigg)^{p'}.
\end{align*}
By Lemma \ref{fK int},
\begin{align*}
 \cB_4 & \les \bigg(\int_\rho^{t'} \bigg( \int_{\bR^d} G^{2p'}_{s-\theta}(y-w) dy \bigg)^{\frac{1}{p'}} ds\bigg)^{p'}.
\end{align*}
Using \eqref{int G p} then integrating with respect to $ds$,
\begin{align*}
 \cB_4 & \les \bigg(\int_\rho^{T} (s-\theta)^{\frac{2-2p'}{p'}} ds\bigg)^{p'} \les C_{T}.
\end{align*}

For $\cB_5$, we note that Corollary \ref{D2u-support} introduces the indicator $\mathbf 1_{\{|x-z| < T\}}$. We apply again Lemma \ref{Hol-lem} for the measure $\mathbf 1_{\{|x-z| < T\}}dx$, if $p'< 1$. (If $p'=1$, the relation below is trivial.) We obtain:
\begin{align*}
  \cB_5 & \les \int_{\bR^d} \sup_{\rho \leq t \leq t'}  \bigg(\int_\rho^t  \int_{\bR^d} \int_{\bR^d} \int_{\bR^d} G_{t-s}(x-y) \|D^2_{(r,z),(\theta,w)}u(s,y)\|_{p} f(y-y') \\
 & \quad G_{t-s}(x-y') \|D^2_{(r,z),(\theta,w)}u(s,y')\|_{p} dx dy dy' ds \bigg)^{p'} dz \\
 & \leq \int_{\bR^d} \bigg(\int_\rho^{t'}  \int_{\bR^d} \int_{\bR^d}  \|D^2_{(r,z),(\theta,w)}u(s,y)\|_{p} f(y-y')K_{t'}(y-y')  \\
 & \quad \|D^2_{(r,z),(\theta,w)}u(s,y')\|_{p} dy dy' ds \bigg)^{p'} dz.
\end{align*}
Applying Lemma \ref{fK int}, we get: 
\begin{align*}
  \cB_5 & \les \int_{\bR^d} \bigg(\int_\rho^{t'}  \bigg(\int_{\bR^d}  \|D^2_{(r,z),(\theta,w)}u(s,y)\|^{2p'}_{p} dy \bigg)^{\frac{1}{p'}} ds \bigg)^{p'} dz.
\end{align*}
The $dy$ integral depends on $s$. Taking the supremum of similar integrals for $s' \in [\rho,s]$, we obtain:
\begin{align*}
  \cB_5 & \les \int_{\bR^d} \bigg( \int_\rho^{t'}  \bigg(\sup_{\rho \leq s'\leq s} \int_{\bR^2}  \|D^2_{(r,z),(\theta,w)}u(s',y)\|^{2p'}_{p} dy \bigg)^{\frac{1}{p'}} ds \bigg)^{p'} dz. 
\end{align*}

We define $$M_{r,z,\theta,w}(t) = \sup_{\rho \leq s \leq t} \int_{\bR^d}  \|D^2_{(r,z),(\theta,w)}u(s,y)\|^{2p'}_{p} dy.$$ 

Then $M_{r,z,\theta,w}(t)$ is non-decreasing in $t$. Therefore, by Lemma \ref{incr ineq}, we have: 
\begin{align*}
  \cB_5 & \les  \int_{\bR^d} p'\int_\rho^{t'} (t'-s)^{p'-1} M_{r,z,\theta,w}(s) ds dz \\
  & \les \int_\rho^{t'} (t'-t)^{p'-1} \int_{\bR^d} \sup_{\rho \leq s \leq t} \int_{\bR^d}  \|D^2_{(r,z),(\theta,w)}u(s,y)\|^{2p'}_{p} dy dz dt.
\end{align*}

Combining the bounds gives the recursion, 
\begin{align*}
 & \int_{\bR^d} \sup_{\rho < t < t'} \int_{\bR^d} \|D^2_{(r,z),(\theta,w)}u(t,x)\|_{p}^{2p'} dx dz \\
 & \quad \leq A_T + B_T\int_\rho^{t'} (t'-t)^{p'-1} \int_{\bR^d} \sup_{\rho<s<t} \int_{\bR^d}  \|D^2_{(r,z),(\theta,w)}u(s,y)\|^{2p'}_{p} dy dz dt
\end{align*}
Let 
\[
 H_{r,\theta,w}(t) = \int_{\bR^d} \sup_{\rho<s<t} \int_{\bR^d}  \|D^2_{(r,z),(\theta,w)}u(s,y)\|^{2p'}_{p} dy dz.  
\]
Applying the above recursion twice and integrating yields, 
\[
H_{r,\theta,w}(t') \leq A_T + B_T \int_\rho^{t'} (t'-t)^{2p'-1} H_{r,\theta,w}(t)dt \leq   A_T + B_T \int_\rho^{t'} H_{r,\theta,w}(t)dt. 
\]
Applying the classical Gronwall lemma gives the desired bound, 
\[
 H_{r,\theta,w}(t) \leq A_Te^{B_T(t-\rho)} \leq C_T.
\]
\end{proof}

\begin{remark}
In the case $d = 1$ it is possible to establish the bound 
\[
 \|D^2_{(r,z),(\theta,w)} u(t,x)\|_p \les G_{t-r}(x-z)G_{2t-r-\theta}(z-w)
\]
 for all $0<\theta<r<t<T$ and $x,z,w \in \bR$, which is stronger than Theorem \ref{key-D2u-int}. However, this relies on the wave kernel $G_t$ being in $L^2(\bR^d)$ which only holds in dimension $d = 1$. The method is similar to the proof of Theorem \ref{key-Du}. We omit the proof for brevity. 
\end{remark}

\section{Proof of the main results}

In this section, we present the proofs of Theorems \ref{ergodic-th}, \ref{cov-th}, \ref{QCLT} and \ref{FCLT}.

\subsection{Ergodicity}

The proof of Theorem \ref{ergodic-th} follows directly from Theorem \ref{key-Du} and the more general result given below.

\begin{lemma}\label{ergodicity}
Let $\{Z(t,x) : t \geq 0,\ x \in \bR^d \}$ be an adapted random field such that $\{ Z(t,x)\}_{x \in \bR^d}$ is strictly stationary, and $Z(t,x) \in \bD^{1,2}$ for any $t \geq 0$ and $x \in \bR^d$. Assume that for any $0 < r < t$ and $x,z \in \bR^d$,
\begin{equation}\label{DZ}
\| D_{r,z}Z(t,x)\|_2 \leq C_t G_{t-r}(x-z)
\end{equation}
where $C_t > 0$ is a constant that depends on $t$. Then $\{ Z(t,x)\}_{x \in \bR^d}$ is ergodic.
\end{lemma}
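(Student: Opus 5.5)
We will use the standard ergodicity criterion for stationary random fields via Malliavin calculus, as in Lemma 7.2 of \cite{CKNP21} or the arguments in \cite{NZ22}. A strictly stationary field $\{Z(t,x)\}_{x\in\bR^d}$ is ergodic if for every integer $m\geq1$, all $\xi_1,\dots,\xi_m\in\bR^d$, and all $g_1,\dots,g_m\in C_b^\infty(\bR)$ (or just trigonometric functions $e^{i\lambda\cdot}$) we have
\[
\lim_{R\to\infty}\frac{1}{R^{2d}}\,\mathrm{Var}\left(\int_{B_R} \prod_{j=1}^m g_j\big(Z(t,x+\xi_j)\big)\,dx\right)=0 .
\]
Indeed, by the von Neumann mean ergodic theorem, spatial averages of any $L^2$ functional of the field converge in $L^2$ to the conditional expectation on the invariant $\sigma$-field. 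Functionals of the form $\prod_j g_j(Z(t,x+\xi_j))$ generate $L^2$ of the field's $\sigma$-field. Hence vanishing variance forces the invariant $\sigma$-field to be trivial.

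We fix such $m$, $\xi_j$ and $g_j$, and set $\Phi(x)=\prod_j g_j(Z(t,x+\xi_j))$. By the chain rule \eqref{chain} (product version), $\Phi(x)\in\bD^{1,2}$ and
\[
D_{r,z}\Phi(x)=\sum_{j=1}^m\Big(\prod_{i\neq j} g_i(Z(t,x+\xi_i))\Big)g_j'(Z(t,x+\xi_j))D_{r,z}Z(t,x+\xi_j).
\]
Boundedness of the $g_i,g_i'$ and \eqref{DZ} then give $\|D_{r,z}\Phi(x)\|_2\les\sum_j G_{t-r}(x+\xi_j-z)$. We also have $D_{r,z}\Phi(x)=0$ for $r>t$, by adaptedness and \eqref{Mal-zero}.

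Next we apply the Poincaré inequality \eqref{Poincare} to $F=G=\int_{B_R}\Phi(x)dx$. Using Minkowski's inequality to move the $L^2(\Omega)$ norm inside the $dx$ integral, we get
\[
\mathrm{Var}(F)\les\sum_{j,k}\int_0^t\int_{\bR^{2d}}\varphi_{t,R}(r,z-\xi_j)\varphi_{t,R}(r,z'-\xi_k)f(z-z')\,dz\,dz'\,dr,
\]
with $\varphi_{t,R}$ as in \eqref{def-var}. For the inner integral we use \eqref{LHS-q'} together with \eqref{est} (translation invariance of $L^p$ norms). This bounds it by $\|\varphi_{t,R}(r,\cdot)\|_{L^{2q'}}^2\les R^{d/q'}(t-r)^2=R^{\gamma}(t-r)^2$. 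For large $R$ the translated supports stay in a ball of radius $\les R$, so \eqref{est} still applies. Hence $\mathrm{Var}(F)\les R^{\gamma}$.

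Finally, $\gamma<2d$ in both cases ($\gamma=d$, or $\gamma=2d-\beta$ with $\beta>0$), so $R^{-2d}\mathrm{Var}(F)\to0$. In the white noise case $d=1$ we instead use $\int_0^t\|\varphi_{t,R}(r,\cdot)\|_{L^2}^2dr\les R$, which gives the same conclusion.

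The main obstacle is justifying the reduction to the variance criterion. One must check that the chosen class of functionals is dense enough to characterize the invariant $\sigma$-field. This is standard but must be cited correctly, and requires $\{Z(t,x)\}$ to be jointly measurable and stationary. The Malliavin estimate itself is routine once \eqref{DZ} is in hand.
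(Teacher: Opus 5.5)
Your proposal is correct and follows essentially the same route as the paper. Both reduce ergodicity to showing that $R^{-2d}$ times the variance of spatial averages of bounded smooth functionals of finitely many shifted values tends to zero. Both then bound that variance by $R^{\gamma}=o(R^{2d})$, using the Poincar\'e inequality \eqref{Poincare}, the chain rule, \eqref{DZ}, \eqref{LHS-q'} and \eqref{est}.

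The differences are minor. You use products $\prod_j g_j(Z(t,x+\xi_j))$ averaged over $B_R$ and justify the criterion through the mean ergodic theorem, whereas the paper uses $\sin$ or $\cos$ of $\sum_j b_j Z(t,x+\zeta_j)$ averaged over $[0,R]^d$ and cites Lemma 4.2 of its reference. Your use of translation invariance of the $L^{2q'}$ norm also makes the last step a bit cleaner than the paper's support argument.
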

\begin{proof} By Lemma 4.2 of \cite{BZ24}, it suffices to show that for any $k\geq 1$, and $b_1, \ldots , b_k, \zeta_1, \ldots, \zeta_k\in\bR^d$ and $g \in \{x \to \sin(x), x \to \cos(x)\}$, the following relation holds:
\begin{align}
\label{ERG1}
\lim_{R\to\infty} \frac{1}{R^{2d}}
{\rm Var}\bigg(  \int_{[0,R]^d} g \bigg(\sum_{j=1}^k b_j  Z(t,x + \zeta_j)  \bigg) dx \bigg)
=0.
\end{align}
We denote 
\[
X_{b,\zeta}(t,x) := g \left(\sum_{j=1}^k b_j  Z(t,x + \zeta_j)  \right)
\]
and 
\[
 V_R := {\rm Var}\left(  \int_{[0,R]^d} X_{b,\zeta}(t,x) dx \right)
\]
By Poincar\'e inequality \eqref{Poincare}, \eqref{LHS-q'}, and Minkowski's inequality,
\begin{align*}
V_R & \leq \int_{\bR_+} \int_{\bR^{2d}} \left\| D_{r,y}\int_{[0,R]^d} X_{b,\zeta}(t,x) dx  \right\|_2 \left\| D_{r,y'}\int_{[0,R]^d} X_{b,\zeta}(t,x) dx  \right\|_2 f(y-y') dydy'dr \\
 & \les \int_{\bR_+} \left[\int_{\bR^{d}} \left\| \int_{[0,R]^d} D_{r,y}X_{b,\zeta}(t,x) dx  \right\|_2^{2q'} dy\right]^{1/q'} dr \\
 & \les \int_{\bR_+} \left[\int_{\bR^{d}} \left( \int_{[0,R]^d} \left\| D_{r,y}X_{b,\zeta}(t,x)  \right\|_2 dx\right) ^{2q'} dy\right]^{1/q'} dr.
\end{align*}
By the definition of $X_{b,\zeta}(t,x)$, the chain rule \eqref{chain}, and \eqref{DZ},
\[
 \left\| D_{r,y}X_{b,\zeta}(t,x)  \right\|_2 \leq C_t\sum_{j=1}^k |b_j|  G_t(x + \zeta_j - y).
\]
Therefore, by the above and H\"older's inequality, 
\begin{align*}
V_R & \les  \int_{\bR_+} \left[\int_{\bR^{d}} \left( \int_{[0,R]^d} \sum_{j=1}^k |b_j| G_{t-r}(x + \zeta_j - y) dx\right) ^{2q'} dy\right]^{1/q'} dr \\
 & \les \sum_{j=1}^k |b_j|^2 \int_{\bR_+} \left[\int_{\bR^{d}} \left( \int_{[0,R]^d} G_{t-r}(x + \zeta_j - y) dx\right) ^{2q'} dy\right]^{1/q'} dr.
\end{align*}
For $R \geq t$, since $|x| \leq \sqrt{d}R$, we have $|\zeta_j - y| \leq (1+\sqrt{d})R$ for each $j = 1,\cdots,k$. Therefore,
\[
V_R \les \sum_{j=1}^k |b_j|^2 \int_{\bR_+} \left[\int_{B_{(1+\sqrt{d})R}(\zeta_j)} \left( \int_{[0,R]^d} G_{t-r}(x + \zeta_j - y) dx\right) ^{2q'} dy\right]^{1/q'} dr.
\]
Bounding $[0,R]^d$ by $\bR^d$ and integrating in the order of $dx$, $dy$ then $dr$ yields, 
\[
V_R \les \sum_{j=1}^k |b_j|^2 R^{\gamma}.
\]
Since $\gamma < 2d$ this proves Lemma \ref{ergodicity} and in turn, Theorem \ref{ergodic-th}. 
\end{proof}

\subsection{Limiting covariance}

Let $\omega_d={\rm Vol}(B_1)$, i.e. $\omega_1=2$ and $\omega_2=\pi$.

\medskip

{\em Proof of Theorem \ref{cov-th}:} {\em Case 1. $f \in L^1(\bR^d)$.} By Lemma 18 of \cite{dalang99}, the process $\{u(t,x);t\geq 0,x\in \bR^d\}$ satisfies property (S) of \cite{dalang99}. In particular, this implies that for any $(t_1,x_1),\ldots,(t_k,x_k) \in  \bR_{+} \times \bR^d$ and $z \in \bR^d$,
\[
\big(u(t_1,x_1+z),\ldots,u(t_k,x_k+z)\big) \stackrel{d}{=} \big(u(t_1,x_1),\ldots,u(t_k,x_k)\big),
\]
where $\stackrel{d}{=}$ denotes equality in distribution. In particular, for any $t_1,t_2>0$,
\[
{\rm Cov}\big(u(t_1,x),u(t_2,y)\big)=:\rho_{t_1,t_2}(x-y) \quad \mbox{depends only on $x-y$}.
\]
By the dominated convergence theorem,
\begin{align*}
\frac{1}{R^{d}}\bE[F_R(t_1)F_R(t_2)] & =\frac{1}{R^{d}}\int_{B_R} \int_{B_R} \rho_{t_1,t_2}(x-y)dxdy=\frac{1}{R^{d}} \int_{B_{2R}} \rho_{t_1,t_2}(z) {\rm Vol} \big(B_R \cap B_R(-z)\big)dz \\
& \longrightarrow \omega_{d} \int_{\bR^d} \rho_{t_1,t_2}(z) dz=:K(t_1,t_2), \quad \mbox{as} \ R \to \infty,
\end{align*}
provided that 
\begin{equation}
\label{rho-int}
\int_{\bR^d} | \rho_{t_1,t_2}(x)| dx<\infty.
\end{equation}

To prove \eqref{rho-int}, we use Poincar\'e inequality \eqref{Poincare}, followed by the key estimate given by Theorem \ref{key-Du} and relation \eqref{int-G}:

\begin{align*}
& \int_{\bR^d}| \rho_{t_1,t_2}(x)| dx \leq \int_0^t \int_{\bR^d} \int_{\bR^d} \int_{\bR^d}  \|D_{r,z}u(t,x)\|_2 \|D_{r,z'}u(t,0)\|_2 f(z-z') dzdz'dx dr \\
& \leq C_{t,2}^2 \int_0^t \int_{\bR^d} \int_{\bR^d} \int_{\bR^d} G_{t-r}(x-z) G_{t-r}(-z') f(z-z') dzdz'dx dr\\
& = C_{t,2}^2\|f\|_{L^1(\bR^d)} \int_0^t (t-r)^2 dr<\infty.
\end{align*}

{\em Case 2. $f(x)=|x|^{-\beta}$ for some $\beta \in (0,d\wedge 2)$.}
Note that $F_R(t)=A_R(t)+B_R(t)$, where 
\begin{align*}
A_R(t) & =\int_0^t \int_{\bR^d} \varphi_{t,R}(s,y) \sigma \big( u(s,y)\big) W(ds,dy)\\
B_R(t) & = \int_0^t \int_{\bR^d} \varphi_{t,R}(s,y)\Big(b\big( u(s,y)\big)-\bE[b\big(u(s,y)\big)] \Big) dsdy.
\end{align*}
Hence, $\bE[F_{R}(t_1)F_R(t_2)]=\sum_{i=1}^4 T_{i,R}$ where
\begin{align*}
T_{1,R} &=\bE[A_{R}(t_1) A_{R}(t_2)], \quad T_{2,R}=\bE[A_R(t_1) B_R(t_2)] \\
T_{3,R} & =\bE[A_R(t_2) B_R(t_1)], \quad  T_{4,R}=\bE[B_R(t_1) B_R(t_2)].
\end{align*}

We will show that the limiting behaviour is dictated by $T_{1,R}$ and the other terms are negligible.
We examine each term separately. By It\^o isometry and the stationarity of $\{u(t,x)\}_{x\in \bR^d}$,
\[
T_{1,R}= \int_0^{t_1 \wedge t_2} \int_{(\bR^d)^2} \varphi_{t_1,R}(s,y) \varphi_{t_2,R} (s,z) |y-z|^{-\beta}\bE\big[\sigma(u(s,y)) \sigma(u(s,z))\big] dydz ds.
\]
Hence, $R^{\beta-2d} T_{1,R}=\cI_R+\cJ_R$, where
\begin{align*}
\cI_{R} &=R^{\beta-2d}\int_0^{t_1 \wedge t_2} \int_{(\bR^d)^2} \varphi_{t_1,R}(s,y) \varphi_{t_2,R} (s,z) |y-z|^{-\beta}{\rm Cov}\Big(\sigma(u(s,y)), \sigma(u(s,z))\Big) dydz ds, \\
\cJ_R &=\int_0^{t_1\wedge t_2}\xi^2(s) \psi_{R}(t_1,t_2;s,s) ds,
\end{align*}
where $\xi(s) = \bE[\sigma(u(s,0))]$, and for any $0\leq s_1 \leq t_1$ and $0\leq s_2 \leq t_2$, we denote
\[
\psi_R(t_1,t_2;s_1,s_2):=R^{\beta-2d} \int_{\bR^d} \int_{\bR^d} \varphi_{t_1,R}(s_1,y) \varphi_{t_2,R}(s_2,z)|y-z|^{-\beta}dydz.
\]
Similarly to Lemma 2.2 of \cite{BNZ}, it can be proved that
$\psi_R(t_1,t_2;s_1,s_2)$ is bounded by $c_{d,\beta}K_{d,\beta} (t_1-s_1)(t_2-s_2)$, and
\begin{equation}
\label{lem22}
\lim_{R \to \infty}\psi_R(t_1,t_2;s_1,s_2) =c_{d,\beta}K_{d,\beta} (t_1-s_1)(t_2-s_2)
\end{equation}
where, $K_{d,\beta}=\int_{\bR^d}|\cF \mathbf 1_{B_1}(\xi)|^2 |\xi|^{-(d-\beta)} d\xi$.

Using the same argument as in the proof of Proposition 3.1 of \cite{BNZ}, from the Clark-Ocone formula and the key estimate given by Theorem \ref{key-Du}, we deduce that
\[
\lim_{|y-z| \to \infty}{\rm Cov}\Big(\sigma(u(s,y)), \sigma(u(s,z))\Big) =0, 
\]
uniformly in $s\in [0,t_1 \wedge t_2]$, and therefore, $\cI_R \to 0$ as $R \to \infty$. 
By relation \eqref{lem22} and the dominated convergence theorem, 
\[
\lim_{R \to \infty}\cJ_R=c_{d,\beta} K_{d,\beta}\int_0^{t_1\wedge t_2}\xi^2(s) (t_1-s)(t_2-s)  ds=:K(t_1,t_2).
\]

Using the same idea, we study $T_{4,R}$:
\[
T_{4,R}= \int_0^{t_1} \int_0^{t_2} \int_{(\bR^d)^2} \varphi_{t_1,R}(s_1,y) \varphi_{t_2,R} (s_2,z) {\rm Cov}\Big(b(u(s_1,y)), b(u(s_2,z))\Big) dydz ds_1 ds_2.
\]
As in the proof of (3.3) of \cite{BNZ}, to show that $R^{\beta-2d}T_{4,R}\to 0$ as $R \to \infty$, it is enough to prove that
\begin{equation}
\label{cov-b}
\lim_{|y-z|\to \infty}{\rm Cov}\Big(b(u(s_1,y)), b(u(s_2,z))\Big) = 0,
\end{equation}
uniformly in $s_1\in [0,t_1]$ and $s_2 \in [0,t_2]$. 
To prove this, we use Clark-Ocone formula, followed by Jensen's inequality for conditional expectation, Theorem \ref{key-Du}, and boundedness of $b'$. We infer that:
\begin{align*}
& \Big|{\rm Cov}\Big(b(u(s_1,y)), b(u(s_2,z))\Big)\Big| \\
& \quad\leq \int_{0}^{s_1 \wedge s_2} \int_{(\bR^d)^2} 
\big\|D_{r,w} b\big(u(s_1,y)\big)\big\|_2 \big\|D_{r,w'} b\big(u(s_2,z)\big)\big\|_2 |w-w'|^{-\beta}dw dw' dr\\
& \quad \les \int_{0}^{s_1 \wedge s_2} \int_{(\bR^d)^2} G_{s_1-r}(w-y) G_{s_2-r}(w'-z)|w-w'|^{-\beta} dw dw' dr \\
& \quad \les \Big(|y-z|-(t_1+t_2)\Big)^{-\beta} \int_{0}^{s_1 \wedge s_2} \int_{(\bR^d)^2} G_{s_1-r}(w-y) G_{s_2-r}(w'-z)dw dw' dr .
\end{align*}
For the last line, we used the fact that $G_{s_1-r}(w-y) G_{s_2-r}(w'-z)$ contains the indicator of the set $\{|w-y|<s_1-r,|w'-z|<s_2-r\}$, and on this set
\[
|w-w'|\geq |y-z|-|(y-w)-(z-w')| \geq |y-z|-(s_1+s_2)\geq |y-z|-(t_1+t_2).
\]
By \eqref{int-G}, the last integral is equal to
$\int_0^{s_1 \wedge s_2} (s_1-r)(s_2-r)dr$. This proves that relation \eqref{cov-b} holds, uniformly in $s_1 \in [0,t_1]$ and $s_2 \in [0,t_2]$.

Finally, the fact that $R^{\beta-2d}T_{i,R}\to 0$ for $R \to \infty$ and $i=2,3$ follows by Cauchy-Schwarz inequality, using arguments similar to those used for $T_{1,R}$ and $T_{4,R}$ with $t_1=t_2$.
This concludes the proof of Theorem \ref{cov-th}.

\subsection{Quantitative CLT}

In this section, we give the proof of Theorem \ref{QCLT}. We recall that $|\cH|$ is the set of all measurable functions $\varphi: \bR_{+}\times \bR^d \to \bR$ such that
\[
\int_{\bR_{+}} \int_{(\bR^d)^2}|\varphi(t,x) \varphi(t,x')| f(x-x') dxdx'dt<\infty,
\]
and $|\cH^{\otimes 2}|$ is the set of all measurable functions $\varphi: (\bR_{+}\times \bR^d)^2 \to \bR$ such that
\[
\int_{\bR_{+}^2} \int_{(\bR^d)^4}|\varphi(t,x,s,y) \varphi(t,x',s,y')| f(x-x') f(y-y')dxdx'dydy'dt ds<\infty.
\]

We will use the following result, which is a particular case of Proposition 1.8 of \cite{BNQZ} with $\gamma_0=\delta_0$.

\begin{proposition}
\label{prop18}
   If $F\in\mathbb{D}^{2,4}$  has mean zero and variance $\sigma^2\in(0,\infty)$ such that with probability 1,  $DF\in| \cH|$ and $D^2F\in|\cH^{\otimes 2}|$, then
\[
d_{\rm TV}\Big( \frac{F}{\sigma},  Z\Big) \leq \frac{4}{\sigma^2} \sqrt{\mathcal{A}},
\]
where $Z\sim N(0,1)$ and
\begin{align*}
\mathcal{A}:&= \int_{\bR_+^3\times\bR^{6d}} \| D_{(r,z),(\theta,w)}^2 F \|_4  \| D_{(s,y),(\theta,w')}^2 F \|_4  \| D_{r,z'}F\|_4 \| D_{s, y' }F \|_4  \\
&\quad  \quad \quad f(y-y')f(z-z') f(w-w')  dydy'dzdz' dwdw'  dr ds d\theta.
\end{align*}
\end{proposition}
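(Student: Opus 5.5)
The plan is to derive the bound from the Malliavin--Stein estimate combined with the Gaussian Poincar\'e inequality \eqref{Poincare}, following the second-order Poincar\'e inequality of Nourdin--Peccati--Reinert as adapted to the covariance structure of $\cH$ (time-white, spatial kernel $f$). This is exactly the route of Proposition 1.8 of \cite{BNQZ}, specialized to $\gamma_0=\delta_0$: every time integral in $\cH$ collapses to a single time variable, which is why only three time variables $r,s,\theta$ appear in $\cA$.

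\textbf{Step 1 (Malliavin--Stein).} For centered $F\in\bD^{1,2}$ with variance $\sigma^2$, the standard bound is
\[
d_{TV}(F/\sigma,Z)\le \frac{2}{\sigma^2}\,\bE\big|\sigma^2-\langle DF,-DL^{-1}F\rangle_{\cH}\big| .
\]
Since $\bE\langle DF,-DL^{-1}F\rangle_{\cH}=\bE[F^2]=\sigma^2$, Cauchy--Schwarz gives
\[
d_{TV}(F/\sigma,Z)\le \frac{2}{\sigma^2}\sqrt{{\rm Var}\big(\langle DF,-DL^{-1}F\rangle_{\cH}\big)}.
\]

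\textbf{Step 2 (Poincar\'e).} Apply \eqref{Poincare} to the variable $\Phi:=\langle DF,-DL^{-1}F\rangle_{\cH}$ (taking $F=G=\Phi$ there). This requires $\Phi\in\bD^{1,2}$, which should follow from $F\in\bD^{2,4}$. By the product rule,
\[
D_{\theta,w}\Phi=\langle D^2_{(\theta,w),\cdot}F,-DL^{-1}F\rangle_{\cH}+\langle DF,-D^2_{(\theta,w),\cdot}L^{-1}F\rangle_{\cH}.
\]
Each inner product is written out explicitly as $\int dr\int\int dz\,dz'\,(\cdots)f(z-z')$, using $DF\in|\cH|$ and $D^2F\in|\cH^{\otimes2}|$ so that the integrals converge absolutely. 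We then bound $\|D_{\theta,w}\Phi\|_2$ by Minkowski's inequality, and split the resulting products with H\"older's inequality in $L^4\times L^4$.

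\textbf{Step 3 (Mehler's formula).} We need to replace $-DL^{-1}F$ and $-D^2L^{-1}F$ by $DF$ and $D^2F$. Write
\[
-D^kL^{-1}F=\int_0^\infty e^{-kt}P_t(D^kF)\,dt .
\]
Since the Ornstein--Uhlenbeck semigroup $P_t$ is a contraction on $L^4$, this gives $\|D^k_{\cdot}L^{-1}F\|_4\le k^{-1}\|D^k_{\cdot}F\|_4$ pointwise in the arguments. Substituting into Step 2 bounds $\|D_{\theta,w}\Phi\|_2$ by $\tfrac32\int\|D^2_{(\theta,w),(r,z)}F\|_4\|D_{r,z'}F\|_4 f(z-z')\,dz\,dz'\,dr$. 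We then multiply this bound at $(\theta,w)$ by the same bound at $(\theta,w')$ and integrate against $f(w-w')$. This reproduces exactly the structure of $\cA$, with ${\rm Var}(\Phi)\le C\cA$. Tracking the constants ($\tfrac32$ squared, times $2$) yields $\tfrac{4}{\sigma^2}\sqrt{\cA}$ once they are organized as in \cite{BNQZ}.

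\textbf{Main obstacle.} The delicate point is justifying the pointwise use of $D$, Mehler's formula and Fubini when $f$ is a Riesz kernel, or when the noise is white, so that $\cH$ contains distributions. This is where the hypotheses $DF\in|\cH|$ and $D^2F\in|\cH^{\otimes2}|$ almost surely enter. I would first prove the estimate for smooth cylindrical $F$, where all objects are genuine functions. I would then pass to the limit by approximation in $\bD^{2,4}$, using absolute integrability and Fatou's lemma on $\cA$.
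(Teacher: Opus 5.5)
Your Steps 1--3 are correct and are exactly the argument behind Proposition 1.8 of \cite{BNQZ} (the improved second-order Gaussian Poincar\'e inequality), which the paper does not reprove but simply quotes with $\gamma_0=\delta_0$. Your own bookkeeping gives the constant $2\cdot\frac32=3$, which implies the stated $4$.

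The one piece to drop is the smooth-cylindrical approximation in your last paragraph, which would not work as written. Fatou can only give $\cA\le\liminf_n\cA_n$, the wrong direction for transferring the bound. Moreover, $\cA$ has the $L^4(\Omega)$-norms inside the integrals, so it is not controlled by $\bD^{2,4}$-norms. The detour is also unnecessary: the a.s.\ hypotheses $DF\in|\cH|$ and $D^2F\in|\cH^{\otimes 2}|$ are there precisely so that Steps 2--3 can be run directly on $F$, with Mehler's formula $-D^kL^{-1}F=\int_0^\infty e^{-kt}P_tD^kF\,dt$ applied pointwise.
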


\bigskip

{\em Proof of Theorem \ref{QCLT}:} In the case $R < t$ the total variation is bounded by $1$ so $d_{TV}\left(\frac{F_R(t)}{\sigma_R(t)},Z\right) \leq t^{-(\gamma-2d)/2}R^{(\gamma-2d)/2}$. Hence, we may assume that $R \geq t$. By Proposition \ref{prop18},
\[
d_{\rm TV}\Big( \frac{F_R(t)}{\sigma_R(t)},  Z\Big) \leq \frac{4}{\sigma_R^2(t)} \sqrt{\mathcal{A}_R},
\]
where $Z\sim N(0,1)$ and
\begin{align*}
\mathcal{A}_R:&= \int_{[0,t]^3 \times\bR^{6d}}  \| D_{(r,z),(\theta,w)}^2 F_R(t) \|_4  \| D_{(s,y),(\theta,w')}^2 F_R(t)\|_4  \| D_{r,z'}F_R(t)\|_4 \| D_{s, y' }F_R(t) \|_4  \\
&\quad  \quad \quad f(y-y') f(z-z') f(w-w') dydy'dzdz' dwdw'  dr ds d\theta.
\end{align*}
By Theorem \ref{cov-th}, $\sigma_R^2(t)\sim C R^{\gamma}$. Therefore, it is enough to prove that 
\begin{equation}
\label{bound-AR}
\cA_R \les R^{3\gamma-2d}
\end{equation}
By Minkowski's inequality,
\begin{align*}
\mathcal{A}_R & \leq  \int_{[0,t]^3 \times\bR^{6d}} \int_{B_R^4} \| D_{(r,z),(\theta,w)}^2 u(t,x_1) \|_4  \| D_{(s,y),(\theta,w')}^2 u(t,x_2)\|_4  \| D_{r,z'}u(t,x_3)\|_4 \| D_{s, y' }u(t,x_4) \|_4  \\
&\quad  \quad \quad f(y-y') f(z-z') f(w-w') d\pmb{x}_4 dydy'dzdz' dwdw'  dr ds d\theta \\
\end{align*}
where $\pmb{x}_4=(x_1,x_2,x_3,x_4)$. We apply Theorem \ref{key-Du},
\begin{align*}
\cA_{R} & \les \int_{[0,t]^3 \times\bR^{6d}} \int_{B_R^4} \| D_{(r,z),(\theta,w)}^2 u(t,x_1) \|_4  \| D_{(s,y),(\theta,w')}^2 u(t,x_2)\|_4 \\
& \quad  \quad G_{t-r}(x_3-z') G_{t-s}(x_4-y') f(y-y') f(z-z') f(w-w') d\pmb{x}_4 dydy'dzdz' dwdw'  dr ds d\theta.
\end{align*}
By the definition of $\varphi$ given in \eqref{def-var}, we write $\cA_{R}$ as, 
\begin{align*}
\cA_{R} & \les \int_{[0,t]^3 \times\bR^{2d}} f(w-w')  \\
& \qquad \int_{\bR^d} \int_{B_R}\| D_{(r,z),(\theta,w)}^2 u(t,x_1) \|_4 dx_1 \int_{\bR^d} \varphi_{t,R}(r,z') f(z-z') dz' dz \\
& \qquad \int_{\bR^d} \int_{B_R} \| D_{(s,y),(\theta,w')}^2 u(t,x_2)\|_4  dx_2 \int_{\bR^d} \varphi_{t,R}(s,y') f(y-y') dy' dy \\
& \qquad  dwdw'  dr ds d\theta.
\end{align*}
By Corollary \ref{D2u-support}, $\| D_{(r,z),(\theta,w)}^2 u(t,x_1) \|_4$ has support $|x_1-z| < t$. Therefore, the integrand with respect to $dz$ is supported on $B_{R+t}$. Moreover, for $R \geq t$ we have $B_{R+t} \subseteq B_{2R}$. The same logic for the integral with respect to $dy$ yields, 
\begin{align*}
\cA_{R} & \les \int_{[0,t]^3 \times\bR^{2d}} f(w-w')  \\
& \qquad \int_{B_{2R}} \int_{B_R}\| D_{(r,z),(\theta,w)}^2 u(t,x_1) \|_4 dx_1 \int_{\bR^d} \varphi_{t,R}(r,z') f(z-z') dz' dz \\
& \qquad \int_{B_{2R}} \int_{B_R} \| D_{(s,y),(\theta,w')}^2 u(t,x_2)\|_4  dx_2 \int_{\bR^d} \varphi_{t,R}(s,y') f(y-y') dy' dy \\
& \qquad  dwdw'  dr ds d\theta.
\end{align*}

Since $0 \leq \varphi_{t,R}(r,z') \leq t-r$ we have that, 
\begin{align*}
\sup_{z \in B_{2R}} \int_{B_{2R}}\varphi_{t,R}(r,z')f(z-z')dz' \leq (t-r)\int_{B_{4R}}f(z')dz' \les (t-r)R^{\gamma-d}.
\end{align*}
Applying this twice to $A_{R}$ gives,
\begin{align*}
\cA_{R} & \les R^{2\gamma-2d}\int_{[0,t]^3} (t-r)(t-s) \int_{\bR^d}\int_{\bR^d}f(w-w') \\
& \qquad \left(\int_{B_{2R}} \int_{B_R} \| D_{(r,z),(\theta,w)}^2 u(t,x_1) \|_4 dx_1 dz\right) \\
& \qquad \left(\int_{B_{2R}} \int_{B_R} \| D_{(s,y),(\theta,w')}^2 u(t,x_2)\|_4 dx_2 dy\right)\\
& \qquad dwdw'drdsd\theta.
\end{align*}
By Corollary \ref{D2u-support}, $\| D_{(r,z),(\theta,w)}^2 u(t,x_1) \|_4$ has support $|x_1-w| < t$. Therefore, the integrand with respect to $dw$ is supported on $B_{R+t}$. Moreover, for $R \geq t$ we have $B_{R+t} \subseteq B_{2R}$. The same logic for the integral with respect to $dw'$ yields, 
\begin{align*}
\cA_{R} & \les R^{2\gamma-2d}\int_{[0,t]^3} (t-r)(t-s) \int_{B_{2R}}\int_{B_{2R}}f(w-w') \\
& \qquad \left(\int_{B_{2R}} \int_{B_R} \| D_{(r,z),(\theta,w)}^2 u(t,x_1) \|_4 dx_1 dz\right) \\
& \qquad \left(\int_{B_{2R}} \int_{B_R} \| D_{(s,y),(\theta,w')}^2 u(t,x_2)\|_4 dx_2 dy\right)\\
& \qquad dwdw'drdsd\theta.
\end{align*}
Applying \eqref{LHS-q'} and bounding $B_R, B_{2R}$ by $\bR^d$ gives, 
\begin{align*}
\cA_{R} & \les R^{2\gamma-2d}\int_{[0,t]^3}(t-r)(t-s) \\
& \quad \left\| \int_{\bR^d} \int_{\bR^d} \| D_{(r,z),(\theta,w)}^2 u(t,x_1) \|_4 dx_1 dz \right\|_{L^{2q'}(B_{2R})} \\
& \quad \left\| \int_{\bR^d}\int_{\bR^d} \| D_{(s,y),(\theta,w')}^2 u(t,x_2)\|_4 dx_2 dy \right\|_{L^{2q'}(B_{2R})} drdsd\theta.
\end{align*}
By Corollary \ref{D2u-support} $\| D_{(r,z),(\theta,w)}^2 u(t,x_1) \|_4$ is supported on $|x_1-w| < t$ and $|z-w| < 2t$. Let $p'$ be defined as in the statement of Theorem \ref{key-D2u-int}. Applying H\"older's inequality with power $2p'$ and measure $\mathbf 1_{|x_1-w| < t}\mathbf 1_{|z-w| < t} dx_1dz$,
\[
 \int_{\bR^d} \int_{\bR^d} \| D_{(r,z),(\theta,w)}^2 u(t,x_1) \|_4 dx_1 dz \les \left(\int_{\bR^d} \int_{\bR^d} \| D_{(r,z),(\theta,w)}^2 u(t,x_1) \|^{2p'}_4 dx_1 dz \right)^{1/2p'} 
\]
Taking the supremum,   
\begin{align*}
 \int_{\bR^d} \int_{\bR^d} \| D_{(r,z),(\theta,w)}^2 u(t,x_1) \|_4 dx_1 dz & \les \left(\int_{\bR^d} \sup_{\rho < s < t}\int_{\bR^d} \| D_{(r,z),(\theta,w)}^2 u(s,x_1) \|^{2p'}_4 dx_1 dz \right)^{1/2p'}, 
\end{align*}
where $\rho = \max(r,\theta)$. Applying Theorem \ref{key-D2u-int} gives,
\begin{align*}
 \int_{\bR^d} \int_{\bR^d} \| D_{(r,z),(\theta,w)}^2 u(t,x_1) \|_4 dx_1 dz & \les C_T 
\end{align*}
We have similar calculations for $\int_{\bR^d}\int_{\bR^d} \| D_{(s,y),(\theta,w')}^2 u(t,x_2)\|_4 dx_2 dy$. Therefore,   
\begin{align*}
\cA_{R,1} & \les R^{2\gamma-2d}\int_{[0,t]^3}(t-r)(t-s) \left(\int_{B_{2R}} C_T^{2q'} dw\right)^{1/2q'}  \left(\int_{B_{2R}} C_T^{2q'} dw'\right)^{1/2q'}  drdsd\theta. \\
& \les R^{3\gamma-2d} \int_{[0,t]^3} (t-r)(t-s) drdsd\theta \les R^{3\gamma-2d}.
\end{align*}
\qed
\subsection{Functional CLT}

In this section, we give the proof of Theorem \ref{FCLT}.

{\em Step 1. (finite-dimensional convergence)} In this step, we show that for any integer $m\geq 1$ and for any $t_1,\ldots,t_m>0$,
\[
\Big(\frac{1}{R^{\gamma/2}} F_R(t_1),\ldots, \frac{1}{R^{\gamma/2}}F_R(t_m)  \Big)
\stackrel{d}{\to} \Big(\cG(t_1),\ldots, \cG(t_m) \Big) \quad \mbox{as $R \to \infty$}.
\]
By Cram\'er-Wold theorem, this is equivalent to showing that for any $b_1,\ldots,b_m \in \bR$,
\[
X_R:=\frac{1}{R^{\gamma/2}}\sum_{j=1}^{m}b_jF_R(t_j) \stackrel{d}{\to}X:=\sum_{j=1}^{m}b_j \cG(t_j) 
\quad \mbox{as $R \to \infty$}.
\]
As in the proof of Theorem 1.5.(ii) of \cite{BS26}, it is enough to prove that $X_R/\tau_R \stackrel{d}{\to} Z$ as $R \to \infty$, where $\tau_R^2={\rm Var}(X_R) \to \tau^2={\rm Var}(X)$ and $Z \sim N(0,1)$. For this, we apply Proposition \ref{prop18} to $F=X_R$. It follows that
\begin{equation}
\label{bound-BR}
d_{\rm TV}\Big( \frac{X_R}{\tau_R},  Z\Big) \leq \frac{4}{\tau_R^2} \sqrt{\cB_R} \leq C \sqrt{\cB_R},
\end{equation}
where 
\begin{align*}
\cB_R:&= \int_{\bR_+^3\times\bR^{6d}} dr ds d\theta  dzdz' dydy' dwdw'  \\
&\quad  f(z-z') f(w-w') f(y-y')  \| D_{(r,z),(\theta,w)}^2 X_R \|_4  \| D_{(s,y),(\theta,w')}^2 X_R \|_4  \| D_{r,z'} X_R\|_4 \| D_{s, y' }X_R \|_4.
\end{align*}

Applying Minkowski's inequality to each of the four $\|\cdot\|_4$-norms above, it follows that
\[
\cB_R \leq \frac{1}{R^{2\gamma}} \sum_{i,j,k,\ell=1}^{m} \big|b_i b_j b_k b_{\ell}\big| \cA_R(t_i,t_j,t_k,t_{\ell}), 
\]
where
\begin{align*}
\cA_R(t_i,t_j,t_k,t_{\ell}) &= \int_{B_R^4} dx_1 dx_2 dx_3 dx_4 \int_{\bR_+^3 \times \bR^{6d}}  dr ds d\theta  dzdz' dydy' dwdw'  f(z-z') f(w-w') f(y-y')  \\
&\quad   \| D_{(r,z),(\theta,w)}^2 u(t_i,x_1) \|_4  \| D_{(s,y),(\theta,w')}^2 u(t_j,x_2) \|_4  \| D_{r,z'} u(t_k,x_3)\|_4 \| D_{s, y'} u(t_{\ell},x_4) \|_4.
\end{align*}
Using the same method as for the proof of \eqref{bound-AR}, it can be proved that
\[
\cA_R(t_i,t_j,t_k,t_{\ell}) \leq C R^{3\gamma-2d}
\]
Therefore, it follows that:
\[
\cB_R \leq CR^{\gamma-2d}
\]
Combining this with \eqref{bound-BR}, we infer that $d_{\rm TV}\Big( \frac{X_R}{\tau_R},  Z\Big) \to 0$ as $R \to \infty$.

\medskip

{\em Step 2. (tightness and H\"older continuity)}  We  will use Kolmogorov-Chentsov criterion (Theorem 3.23 of \cite{kallenberg02}) for the existence of the H\"older continuous modification, and the classical moment criterion (Corollary 16.9 of \cite{kallenberg02}) for tightness.  

Note that
\begin{align*}
u(t,x)-\bE[u(t,x)]& =\int_0^t \int_{\bR^d} G_{t-r}(x-z) \sigma\big(u(r,z)\big) W(dr,dz)+\\
&\int_0^t \int_{\bR^d} G_{t-r}(x-z) \Big(b\big(u(r,z)\big) -\bE\big[b\big(u(r,z)\big) \big]\Big) drdz.
\end{align*}

Integrating $dx$ on $B_R$, and using the definition of $\varphi_{t,R}$ \eqref{def-var} , we obtain that,
\begin{align*}
F_R(t) & =\int_0^t \int_{\bR^d} \varphi_{t,R}(r,z) \sigma\big(u(r,z)\big) W(dr,dz)+\\
&\int_0^t \int_{\bR^d}  \varphi_{t,R}(r,z)  \Big(b\big(u(r,z)\big) -\bE\big[b\big(u(r,z)\big) \big]\Big) drdz.
\end{align*}

Let $s,t \in [0,T]$ with $s<t$. We write $F_R(t)-F_R(s)=\sum_{i=1}^4 T_i$, where
\begin{align*}
T_1 & = \int_0^s \int_{\bR^d} \Big(\varphi_{t,R}(r,z) - \varphi_{s,R}(r,z) \Big) \sigma\big(u(r,z)\big) W(dr,dz)\\
T_2 &= \int_0^s \int_{\bR^d}  \Big( \varphi_{t,R}(r,z) - \varphi_{s,R}(r,z) \Big) \Big(b\big(u(r,z)\big) -\bE\big[b\big(u(r,z)\big) \big]\Big) drdz\\
T_3&=\int_s^t \int_{\bR^d} \varphi_{t,R}(r,z) \sigma\big(u(r,z)\big) W(dr,dz)\\
T_4&=\int_s^t \int_{\bR^d}  \varphi_{t,R}(r,z)  \Big(b\big(u(r,z)\big) -\bE\big[b\big(u(r,z)\big) \big]\Big) drdz.
\end{align*}
To estimate $T_1$ and $T_3$, we use Burkholder-Davis-Gundy inequality, followed by Minkowski inequality and the uniform moment estimate \eqref{def-K}. We obtain that for any $p\geq 2$,
\begin{align*}
\|T_1\|_p^2 & \les \int_0^s \int_{\bR^d} \int_{\bR^d}\Big(\varphi_{t,R}(r,y) - \varphi_{s,R}(r,y) \Big)  \Big(\varphi_{t,R}(r,z) - \varphi_{s,R}(r,z) \Big) f(y-z)dydz dr \\
\|T_3\|_p^2 & \les \int_s^t \int_{\bR^d} \int_{\bR^d} \varphi_{t,R}(r,y) \varphi_{t,R}(r,z) f(y-z) dydzdr.
\end{align*}
 
Applying \eqref{LHS-q'}, \eqref{est} and \eqref{diff est} gives
\begin{align*}
\|T_1\|_p^2 & \les \int_0^s \| \varphi_{t,R}(r,\cdot) - \varphi_{s,R}(r,\cdot)  \|_{L^{2q'}(\bR^d)}^2 dr \les R^{\gamma}(t-s)^{2/d}\\
\|T_3\|_p^2 & \les \int_s^t \| \varphi_{t,R}(r,\cdot) \|_{L^{2q'}(\bR^d)}^2 dr \les R^{\gamma}(t-s)^3 \les  R^{\gamma}(t-s)^{2/d}.
\end{align*}
For $T_2$ and $T_4$, we use Clark-Ocone formula, noting that $T_2$ is $\cF_s$-measurable, $T_4$ is $\cF_t$-measurable, and $\bE[T_2] = \bE[T_4] = 0$,
\begin{align*}
T_2 & = \int_0^s \int_{\bR^d} \bE[D_{r',y} T_2 |\cF_{r'}] W(dr',dy), \\
T_4 & = \int_0^t \int_{\bR^d}\bE[D_{r',y} T_4  | \cF_{r'}] W(dr',dy). 
\end{align*}

Similarly, by Burkholder-Davis-Gundy and Minkowski inequalities, for any $p\geq 2$,
\begin{align}
\label{T2}
\|T_2\|^2_p & \les \int_0^s \int_{\bR^d} \int_{\bR^d} \|D_{r',y} T_2\|_{p} \|D_{r',w} T_2\|_{p} f(y-w) dy dw dr', \\
\label{T4}
\|T_4\|^2_p & \les \int_0^t \int_{\bR^d} \int_{\bR^d} \|D_{r',y} T_4\|_{p} \|D_{r',w} T_4\|_{p} f(y-w) dy dw dr'. 
\end{align}
We now calculate $\|D_{r',y} T_2\|_{p}$ and $\|D_{r',y} T_4\|_{p}$. Using the chain rule \eqref{chain} for the Malliavin derivative, and relation \eqref{Mal-zero}, we have:
\begin{align*}
D_{r',y}T_2 &= \int_{r'}^s \int_{\bR^d}  \Big( \varphi_{t,R}(r,z) - \varphi_{s,R}(r,z) \Big) b'\big(u(r,z)\big) D_{r',y}u(r,z) drdz\\
D_{r',y}T_4&=\int_{s \vee r'}^t   \int_{\bR^d}  \varphi_{t,R}(r,z)  b'\big(u(r,z)\big) D_{r',y}u(r,z) drdz.
\end{align*}
Applying Minkowski's inequality, the fact that $b'$ is bounded, and Theorem \ref{key-Du}, we obtain that:
\begin{align*}
\|D_{r',y}T_2\|_{p} &\leq \int_{r'}^s \int_{\bR^d}  \left| \varphi_{t,R}(r,z) - \varphi_{s,R}(r,z) \right| \ \|D_{r',y}u(r,z)\|_{p}  drdz\\
&\les \int_{r'}^s  \int_{\bR^d}  \left| \varphi_{t,R}(r,z) - \varphi_{s,R}(r,z) \right| G_{r-r'}(z-y)  drdz\\
\|D_{r',y}T_4\|_p &\leq  \int_{s \vee r'}^t  \int_{\bR^d}  \varphi_{t,R}(r,z) \|D_{r',y}u(r,z)\|_{p} drdz \\
 &\les \int_{s \vee r'}^t \int_{\bR^d}  \varphi_{t,R}(r,z) G_{r-r'}(z-y) drdz. 
\end{align*}
Substituting the above bounds into \eqref{T2}, \eqref{T4} and applying \eqref{LHS-q'} gives, 
\begin{align*}
\|T_2\|^2_p & \les \int_0^s \left(\int_{\bR^d} \left( \int_{r'}^s \int_{\bR^d}  \left| \varphi_{t,R}(r,z) - \varphi_{s,R}(r,z) \right| G_{r-r'}(z-y)  drdz \right)^{2q'}  dy \right)^{1/q'}dr' \\
\|T_4\|^2_p & \les \int_0^t \left(\int_{\bR^d} \left( \int_{s \vee r'}^t \int_{\bR^d}  \varphi_{t,R}(r,z) G_{r-r'}(z-y)  drdz \right)^{2q'}  dy \right)^{1/q'}dr'.
\end{align*}
Note that $2q' > 1$ and $1/q' > 1$ so we can apply H\"older's inequality twice to the $dr$ integral, bounding the mass by $T$, to obtain, 
\begin{align*}
\|T_2\|^2_p & \les \int_0^s \int_{r'}^s \left(\int_{\bR^d} \left( \int_{\bR^d}  \left| \varphi_{t,R}(r,z) - \varphi_{s,R}(r,z) \right| G_{r-r'}(z-y)  dz \right)^{2q'}  dy \right)^{1/q'}drdr' \\
\|T_4\|^2_p & \les \int_s^t \int_{0}^r \left(\int_{\bR^d} \left(  \int_{\bR^d}   \varphi_{t,R}(r,z) G_{r-r'}(z-y)  dz \right)^{2q'}  dy \right)^{1/q'}dr'dr.
\end{align*}
By Young's inequality, 
\begin{align*}
\|T_2\|^2_p & \les \int_0^s \int_{r'}^s  \| \varphi_{t,R}(r,\cdot) - \varphi_{s,R}(r,\cdot) \|_{L^{2q'}(\bR^d)}^2 \|G_{r-r'}\|_{L^1(\bR^d)}^2 drdr' \\
\|T_4\|^2_p & \les \int_s^t \int_0^r  \| \varphi_{t,R}(r,\cdot)\|_{L^{2q'}(\bR^d)}^2 \|G_{r-r'}\|_{L^1(\bR^d)}^2 dr'dr.
\end{align*}
Finally, we apply \eqref{est}, \eqref{diff est} and $\|G_{r-r'}\|_{L^1(\bR^d)} = r-r'$, to obtain the estimate:
\begin{align*}
\|T_2\|^2_p & \les \int_0^s \int_{r'}^s  R^{\gamma} (t-s)^{\frac{2}{d}} (r-r')^2 drdr' \les R^{\gamma}(t-s)^{2/d} \\
\|T_4\|^2_p & \les \int_s^t \int_0^r R^{\gamma} (t-r)^{2} (r-r')^2 dr'dr \les R^{\gamma}(t-s)^{3} \les R^{\gamma}(t-s)^{2/d}.
\end{align*}
Hence, for any $p\geq 2$, $0\leq s<t\leq T$ and $R>T$,
\[
\|F_R(t)-F_R(s)\|_p \les R^{\gamma/2} (t-s)^{1/d}.
\]


\appendix
\section{Some auxiliary results}

In the proof of Theorem \ref{key-D2u-int}, we use the following application of H\"older's inequality, which allows us to move the exponent $p' \in (0,1)$ {\em outside} the integral.

\begin{lemma}
\label{Hol-lem}
Let $(X,\mathcal{X},\mu)$ be a finite measure space and $\phi:X \to \bR_{+}$ be a measurable function. For any $p' \in (0,1)$,
\[
\int_{X} \big(\phi(x)\big)^{p'} \mu(dx) \leq \big[\mu(X)\big]^{1-p'} \left( \int_{X} \phi(x) \mu(dx)\right)^{p'}.
\]
\end{lemma}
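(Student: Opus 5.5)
The inequality follows from H\"older's inequality with the conjugate exponents $1/p'$ and $1/(1-p')$, applied to the product $\phi^{p'}\cdot 1$ on $(X,\mathcal{X},\mu)$. Since $p'\in(0,1)$, both $1/p'>1$ and $1/(1-p')>1$, and $p'+(1-p')=1$, so these exponents are indeed conjugate. H\"older's inequality then gives
\[
\int_X \phi^{p'}\cdot 1 \, d\mu \leq \left(\int_X (\phi^{p'})^{1/p'} d\mu\right)^{p'} \left(\int_X 1^{1/(1-p')} d\mu\right)^{1-p'} = \left(\int_X \phi\, d\mu\right)^{p'} \big[\mu(X)\big]^{1-p'},
\]
which is exactly the claimed bound.

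The only points needing care are degenerate cases. If $\int_X \phi\, d\mu=\infty$, the right-hand side is $+\infty$ (or of the form $0\cdot\infty$ when $\mu(X)=0$, but then the left side is $0$ as well), so there is nothing to prove. Finiteness of $\mu(X)$ ensures the constant function $1$ lies in $L^{1/(1-p')}(\mu)$. Nonnegativity and measurability of $\phi$ make all integrals well defined in $[0,\infty]$.

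An equivalent route is Jensen's inequality for the concave map $x\mapsto x^{p'}$ under the probability measure $\mu/\mu(X)$ (assuming $\mu(X)>0$). This gives $\frac{1}{\mu(X)}\int_X \phi^{p'}d\mu \leq \big(\frac{1}{\mu(X)}\int_X\phi\, d\mu\big)^{p'}$, and multiplying through by $\mu(X)$ yields the same inequality. I expect no step to be a genuine obstacle; the only care needed is the bookkeeping of the exponent of $\mu(X)$, namely $1-p'$, and the trivial infinite or zero-mass cases.
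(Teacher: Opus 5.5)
Your proof is correct and is essentially the paper's argument: the paper also applies H\"older's inequality to $\phi^{p'}\cdot 1$ with exponent $p=1/p'$ and its conjugate $1/(1-p')$, written as $(\int_X F\,d\mu)^p\le [\mu(X)]^{p-1}\int_X F^p\,d\mu$ with $F=\phi^{p'}$, and then takes the $1/p$-th power. One minor remark: the $0\cdot\infty$ case you mention cannot actually occur, since $\mu(X)=0$ forces $\int_X\phi\,d\mu=0$.
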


\begin{proof}
Let $p=\frac{1}{p'}>1$ and $F(x)=\big(\phi(x)\big)^{p'}$. By H\"older's inequality,
\[
\left(\int_{X} F(x) \mu(dx)\right)^p \leq \big[\mu(X)\big]^{p-1}  \int_{X} \big(F(x) \big)^p \mu(dx).
\]
The conclusion follows taking power $1/p$.
\end{proof}

In the proof of Theroem \ref{key-D2u-int}, we also use the following, which allows us to move the exponent $p' \in (\frac12,1]$ {\em inside} the integral, at a cost. 

\begin{lemma} \label{incr ineq}
Let $p' \in (\frac12,1]$ and $0 \leq r<t$. Let $M:[r,t] \to [0,\infty)$ be a non-decreasing right-continuous function. Then
\[
 \left( \int_r^t M(s)^{\frac{1}{p'}} ds \right)^{p'} \leq p' \int_r^t (t-s)^{p'-1} M(s) ds.
\]
\end{lemma}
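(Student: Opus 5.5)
The plan is to use a layer-cake decomposition of $M$ together with Minkowski's integral inequality in $L^{1/p'}$, which is a genuine norm because $1/p'\geq 1$. Set $q_0:=1/p'\in[1,2)$. For every $s\in[r,t]$ we write
\[
M(s)=\int_0^\infty \mathbf 1_{\{M(s)>\lambda\}}\,d\lambda .
\]
Since $M$ is non-decreasing, each superlevel set $\{s\in[r,t]: M(s)>\lambda\}$ is an interval ending at $t$. It equals $(a_\lambda,t]$ or $[a_\lambda,t]$ for some $a_\lambda\in[r,t]$, with the convention that it is empty when $\lambda\geq M(t)$, in which case we set $a_\lambda=t$. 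Right-continuity fixes which endpoint convention applies, but this has Lebesgue measure zero and does not matter.

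\textbf{Right-hand side.} We compute it exactly by Fubini (Tonelli, since everything is non-negative):
\[
p'\int_r^t (t-s)^{p'-1}M(s)\,ds=\int_0^\infty p'\int_{a_\lambda}^t (t-s)^{p'-1}\,ds\,d\lambda=\int_0^\infty (t-a_\lambda)^{p'}\,d\lambda .
\]
The inner integral is finite because $p'-1>-1$.

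\textbf{Left-hand side.} It is exactly $\|M\|_{L^{q_0}([r,t])}$. Applying Minkowski's integral inequality to $M=\int_0^\infty \mathbf 1_{\{M>\lambda\}}d\lambda$ gives
\[
\Big(\int_r^t M(s)^{1/p'}ds\Big)^{p'}=\Big\|\int_0^\infty \mathbf 1_{\{M>\lambda\}}\,d\lambda\Big\|_{L^{q_0}}\leq \int_0^\infty \big\|\mathbf 1_{\{M>\lambda\}}\big\|_{L^{q_0}}\,d\lambda=\int_0^\infty (t-a_\lambda)^{p'}\,d\lambda .
\]
Comparing the two displays yields the claimed inequality. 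When $p'=1$ both sides coincide, so the inequality holds with equality.

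The only point needing care is the measurability and joint measurability of $(s,\lambda)\mapsto \mathbf 1_{\{M(s)>\lambda\}}$, which is needed to justify Tonelli and the Minkowski integral inequality. This is immediate because $\{(s,\lambda): M(s)>\lambda\}$ is Borel, $M$ being monotone and hence Borel measurable. We can also avoid this issue entirely by first proving the inequality for step functions $M=\sum_i c_i\mathbf 1_{[s_i,t]}$ with $c_i\geq 0$. There it is just the triangle inequality in $L^{q_0}$, together with the identity $p'\int_{s_i}^t (t-s)^{p'-1}ds=(t-s_i)^{p'}$. We then pass to general non-decreasing $M$ by monotone approximation from below and the monotone convergence theorem on both sides. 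I do not expect any real obstacle; the key observation is simply that monotonicity makes the level sets intervals anchored at $t$, whose $L^{q_0}$-norms match the weight $p'(t-s)^{p'-1}$ exactly.
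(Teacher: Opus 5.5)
Your proof is correct and is essentially the paper's argument: the paper also writes the function by the layer-cake formula, applies Minkowski's integral inequality in $L^{1/p'}$ to bound the left-hand side by $\int_0^\infty (\mbox{length of the level set})^{p'}\,d\lambda$, and evaluates the right-hand side exactly by Fubini to get the same quantity. The only difference is cosmetic: the paper first substitutes $s\mapsto t-s$ and describes the level sets through the generalized inverse of the non-increasing function $H(s)=M(t-s)$, whereas you work directly with the superlevel intervals $(a_\lambda,t]$, which also makes it clear that neither right-continuity nor $p'>\frac12$ is really needed.
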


\begin{proof} 
The $p'= 1$ case is trivial.
We suppose that $p'\in (\frac12,1)$.  By a change of variable,
\[
\left( \int_r^t M(s)^{\frac{1}{p'}} ds \right)^{p'} =\left( \int_0^{t-r} M(t-s)^{\frac{1}{p'}} ds \right)^{p'}.
\]
We denote $H(s)=M(t-s)$ for $s \in [0,t-r]$ and $L=t-r$. Then $H$ is non-increasing and left-continuous. It is enough to prove that
\begin{equation}
\label{H-ineq}
J:=\left( \int_0^L H(s)^{\frac{1}{p'}} ds \right)^{p'} \leq p' \int_0^L s^{p'-1} H(s) ds.
\end{equation}

By Minkowski's inequality,
\begin{align*}
J  &= \left[ \int_0^L \left( \int_0^{\infty} 1_{\{r\leq H(s)\}} dr\right)^{\frac{1}{p'}}ds \right]^{p'} = \left\| \int_0^{\infty}1_{\{r \leq H(s)\}} dr\right\|_{L^{\frac{1}{p'}}([0,L];ds)}\\
& \leq \int_0^{\infty} \|1_{\{r \leq H(s)\}} \|_{L^{\frac{1}{p'}}([0,L];ds)} dr=\int_0^{\infty} \|1_{\{s \leq H^{-1}(r)\}} \|_{L^{\frac{1}{p'}}([0,L];ds)} dr=\int_0^{\infty}\big(H^{-1}(r)\big)^{p'}dr,
\end{align*}
where $H^{-1}(y)=\sup\{x \in [0,L];H(x)\geq y\}$. On the other hand, by Fubini theorem,
\begin{align*}
p' \int_0^L s^{p'-1} H(s) ds&=p' \int_0^L s^{p'-1} \left(\int_0^{\infty} 1_{\{r \leq H(s)\}} dr \right) ds\\
& = p' \int_0^{\infty} \int_0^L s^{p'-1} 1_{\{s \leq H^{-1}(r)\}} dsdr=p' \int_0^{\infty} \left(\int_0^{H^{-1}(r)} s^{p'-1}ds \right) dr\\
&=\int_0^{\infty} \big(H^{-1}(r)\big)^{p'}dr.
\end{align*}
Relation \eqref{H-ineq} follows.
\end{proof}


\end{document}